\def\wt{\widetilde}
\def\ov{\overline}
 \def\up{\upharpoonright}
\def\cH{\mathcal H}
\def\cD{\mathcal D} \def\cR{\mathcal R} 
\def\cK{\mathcal K} \def\cL{\mathcal L} \def\cY{\mathcal Y} \def\cZ{\mathcal Z}
\def \gH{\mathfrak H}   \def \gN{\mathfrak N}
\def \bC{\mathbb C}    \def\bR{\mathbb R}
\def \l{\lambda}
\def \a{\alpha} \def \b{\beta}    \def \s{\sigma}  \def \t{\theta} \def\g {\gamma}
\def\d {\delta} \def\om {\omega} \def\Om {\Omega}
\def \f{\varphi}  \def \G{\Gamma} \def\D {\Delta}
\def \C{\widetilde {\mathcal C}}
\def \CA{\C(\cH_0,\cH_1)}
\def \CB{\C(\cH_1,\cH_0)}
\def \CG{\C(\mathcal H)}
\def \cd {\cdot}
\def \Ker{\text{Ker}}  
\def \exa { {Ext}_A}  \def \exl { {Ext}_{L_0}}  
  \def\tm{\times}
\def  \RH {\wt R (\cH_0,\cH_1)} \def \RZ {\wt R^0 (\cH_0,\cH_1)}
\def  \Rh {\wt R (\cH)} \def  \Rz {\wt R^0 (\cH)}
\def \pair {\tau=\{\tau_+,\tau_-\}}
\def\op {\{(C_0,C_1);\cK\}}
\newcommand {\LH}[1] {L'_2[#1,H ]}
  \def \RH {\wt R (\cH_0,\cH_1)}
\def \RZ {\wt R^0 (\cH_0,\cH_1)}
\def\bt{\{\cH,\G_0,\G_1\}}
\def\bta{\{\cH_0\oplus \cH_1,\Gamma _0,\Gamma _1\}}
\def \R {\mathbb R (\l)}
\newtheorem{theorem}{Theorem}[section]
\newtheorem{proposition}[theorem]{Proposition}
\newtheorem{corollary}[theorem]{Corollary}
\newtheorem{lemma}[theorem]{Lemma}
\theoremstyle{definition}
\newtheorem {definition} [theorem]{Definition}
\theoremstyle{remark}
\newtheorem{remark}[theorem]{Remark}
\numberwithin{equation}{section}
\numberwithin{equation}{section}
\begin{document}
\title[On generalized resolvents and characteristic matrices]
{On generalized resolvents and characteristic matrices of differential operators }
\author {Vadim Mogilevskii}
\begin{abstract}
The main objects of our considerations are differential operators generated by a formally selfadjoint differential expression of an even order on the interval $[0,b\rangle \;(b\leq \infty)$ with operator valued coefficients. We complement  and develop the known Shtraus' results on generalized resolvents  and characteristic matrices of the minimal operator $L_0$. Our approach is based on the concept of a decomposing boundary triplet which enables to establish a connection between the Straus' method and boundary value problems (for singular differential operators) with  a spectral parameter in a boundary condition. In particular we provide a parametrization of all characteristic matrices $\Om (\l)$ of the operator $L_0$ immediately in terms of the Nevanlinna boundary parameter $\tau(\l)$. Such  a parametrization is given  in the form of the block-matrix representation of $\Om (\l)$ as well as by means of the  formula for  $\Om (\l)$ similar to the well known Krein-Naimark  formula for generalized resolvents. In passing we obtain the representation of canonical and generalized resolvents in the form of integral operators with the operator kernel (the Green function) defined in terms of fundamental operator solutions of the equation $l[y]-\l\, y=0$.
\end{abstract}

\maketitle
\section{Introduction}
Let $\gH, H$ be Hilbert spaces, let $A$ be a symmetric densely defined operator in $\gH$ with deficiency indices $n_\pm(A)$ and let $\wt A$ be a selfadjoint (canonical or exit space) extension of $A$ acting in a Hilbert space $\wt \gH \supseteq \gH$. Moreover denote by $[H]$ the set of all bounded linear operators in $H$.

As is known the formula for generalized resolvents (in the Shtraus form)
\begin{equation}\label{0.1}
\R=(\wt A(\l)-\l)^{-1}, \quad \l\in\bC_+\cup\bC_-
\end{equation}
gives a bijective correspondence between all generalized resolvents $\R:=P_\gH(\wt A-\l)^{-1}\up\gH$ of $A$ and all holomorphic families of extensions $\wt A(\l)\supset A$ such that $Im \l\cd  Im\,\wt A(\l)\geq 0 $ and $(\wt A(\l))^*=\wt A(\ov\l)$.

Next assume that $\D=[0,b\rangle\; (b\leq\infty)$ is an interval in $\bR$ and
\begin{equation}\label{0.2}
l[y]=\sum_{k=1}^n (-1)^k ( (p_{n-k}y^{(k)})^{(k)}-\tfrac {i}{2}
[(q_{n-k}^*y^{(k)})^{(k-1)}+(q_{n-k} y^{(k-1)})^{(k)}])+p_n y
\end{equation}
is a formally selfadjoint differential expression of an even order $2n$ with the smooth enough  coefficients $p_k(\cd),q_k(\cd):\D\to \bC$. Denote by $L_0$ and $L $ minimal and maximal operators respectively induced by the expression \eqref{0.2} in the Hilbert space $\gH:=L_2(\D)$. It is known that $L_0$ is a closed densely defined symmetric operator with not necessarily equal deficiency indices $n_\pm(L_0)$ and $L_0^*=L$.

Let $Y_0(t,\l)$ be the "canonical" $2n$-component operator solution of the equation $l[y]-\l y=0$ (see \eqref{1.25}) and let $J=\text{codiag}\, (-I_{\bC^n},I_{\bC^n} )$ be a signature operator in $\bC^{n}\oplus \bC^n$. By using formula \eqref{0.1} A.V. Shtraus showed \cite{Sht57} that   each generalized resolvent $\R$ of the minimal operator $L_0$ admits the representation
\begin{equation}\label{0.3}
(\R f)(x)=\smallint\limits_0^b  G (x,t,\l)f(t)\, dt,\;\;\;\; f=f(\cd)\in\gH,
\end{equation}
where the  kernel $G(\cd, \cd, \l):\D\tm\D\to \bC$ is defined by \begin{equation}\label{0.4}
G (x,t,\l)=Y_0(x,\l)(\Om(\l)+\tfrac 1 2 \,\text{sgn} (t-x)J)Y_0^*(t,\ov\l), \quad \l\in \bC_+\cup \bC_-.
\end{equation}
Here $\Om(\cd):\bC_+\cup \bC_-\to [\bC^n\oplus \bC^n]$ is a Nevanlinna operator (matrix) function, which is called a characteristic matrix of the corresponding generalized resolvent $\R$\cite{Sht57}. Next by using the matrix $\Om(\cd)$ A.V. Shtraus constructed in \cite{Sht57} all spectral functions (both orthogonal and not orthogonal) of the operator $L_0$. Observe also that in \cite{Bru74} these results were extended to differential expressions \eqref{0.2} with operator valued coefficients.

A rather different approach in the theory of  spectral functions is based on the application  of boundary value problems with  a spectral parameter in a boundary condition (see \cite{KacKre,DL96,BinBro02} and the references therein). Namely let
\begin{equation}\label{0.4a}
l[y]=-y''+q(t)y, \qquad  (q(t)=\ov{q(t)}), \;\;\;t\in [0,\infty)
\end{equation}
be a Sturm-Liouville expression such that $n_\pm(L_0)=1$ (the limit point case at $\infty$), let $m(\l)$ be the corresponding Titchmarsh-Weyl function \cite{CodLev,Nai} and let $\R$ be a generalized resolvent of the operator $L_0$. Then formula \eqref{0.1} immediately implies that there exists a Nevanlinna function $\tau (\l)$ such that for each $f\in\gH$ the function $y=\R f$ is the unique solution of a boundary value problem
\begin{align}
l[y]-\l y=f\qquad\qquad\qquad\label{0.5}\\
y(0)-\tau (\l)y'(0)=0,\;\;\;\l\in\bC_+\cup\bC_- \label{0.6}
\end{align}
Moreover as was shown in \cite{DL96} the characteristic matrix $\Om(\cd)$ of the generalized resolvent $\R$ is associated with the Nevanlinna parameter $\tau(\cd)$ in \eqref{0.6} via
\begin{equation}\label{0.7}
\Om(\l)=\frac 1 {m(\l)+\tau (\l)}\begin{pmatrix} m(\l)\tau(\l) &  m(\l)\cr m(\l) & -1 \end{pmatrix}.
\end{equation}
In this connection note that the case $n_\pm(L_0)=2$ is more complicated, since in this case the only boundary condition \eqref{0.6} at the point $0$ is not sufficient to define a selfadjoint extension of the operator $L_0$ and, consequently, a spectral function. Moreover, the situation becomes essentially more complicated for a singular differential expression of a higher order.

In the present paper we complement and develop  the above results on generalized resolvents and characteristic matrices. The main objects here are differential operators in the Hilbert space $\gH:=L_2(\D;H)$ generated by a formally  selfadjoint differential expression \eqref{0.2} with operator valued coefficients $p_k(\cd), q_k(\cd):\D\to [H]\; (\dim H\leq\infty)$. Denote by $n_{b\pm}$ deficiency indices of the expression $l[y]$ at the end $b$ of the interval $\D=[0,b\rangle$ \cite{Mog}. Our method is based on the concept of a decomposing $D$-triplet for $L$ introduced in \cite{Mog} for the case $n_{b-}\leq n_{b+}$. To simplify further considerations we suppose below that $n_{b-}= n_{b+}$ (and, therefore, $n_+(L_0)=n_-(L_0)$). In this case a decomposing $D$-triplet can be replaced with a decomposing boundary triplet for $L$, which is defined as follows.

Denote by $\cD$ the domain of the maximal operator $L$. Let  $\cH'$ be a Hilbert space and let $\G_j':\cD\to \cH',\;j\in\{0,1\}$ be linear maps. Construct linear operators $\G_j:\cD\to H^n\oplus \cH_j,\;j\in\{0,1\} $ by
\begin{equation}\label{0.8}
\G_0 y=\{y^{(2)}(0),  \G'_0y\}\,(\in H^n\oplus \cH'), \;\;\; \G_1 y=\{-y^{(1)}(0),  \G'_1y\}\,(\in H^n\oplus \cH'), \quad y\in\cD.
\end{equation}
(here $y^{(1)}(0),\,y^{(2)}(0)\in H^n$ are vectors of the quasi-derivatives at the point $0$, see \eqref{1.21}). Then a collection $\Pi=\bt$, where $\cH:=H^n\oplus\cH'$ and $\G_0,\G_1$ are linear maps \eqref{0.8}, is called a decomposing boundary triplet for $L$ if the map $\G=(\G_0\;\;\G_1)^\top$ is surjective and the following identity holds
\begin{equation*}
(Ly,z)-(y,Lz)=(\G_1 y,\G_0 z)- (\G_0 y,\G_1 z),\quad y,z \in \cD.
\end{equation*}
The last two conditions mean that $\Pi$ is a boundary triplet for $L$ in the sense of \cite{GorGor}. Therefore according to \cite{DM91} the equality
\begin{equation*}
\G_1 f_\l=M(\l)\,\G_0 f_\l, \quad f_\l\in\gN_\l(L_0):=\Ker (L-\l), \;\;\l\in\bC_+\cup\bC_-
\end{equation*}
defines a uniformly strict Nevanlinna operator function $M(\l)(\in [\cH])$ which is called the Weyl function corresponding to the triplet $\Pi$. In our case the function $M(\cd)$ admits the block matrix representation
\begin{equation}\label{0.9}
M(\l)=\begin{pmatrix} m(\l) & M_{2}(\l)\cr M_{3}(\l) & M_{4}(\l)\end{pmatrix}:
H^n\oplus\cH' \to H^n\oplus \cH', \quad \l\in\bC_+\cup\bC_-.
\end{equation}
Formula \eqref{0.9} induces the uniformly strict Nevanlinna function $m(\l)(\in [H^n])$ which is called the $m$-function of the operator $L_0$. In the scalar case ($\dim H=1$) this function coincides with the classical characteristic (Titchmarsh-Weyl) function for decomposing boundary conditions (for more details see \cite{Mog}).

Next denote by $\wt R(\cH)$ the set of all Nevanlinna operator pairs (Nevanlinna families of linear relations) \cite{DM00}
\begin{equation}\label{0.10}
\tau (\l)=\{(C_0(\l),C_1(\l));\cH\}, \quad \l\in\bC_+\cup \bC_-
\end{equation}
with operator functions $C_0(\l)$ and $C_1(\l)$ defined by the block-matrix representations
\begin{align}
C_0(\l)=(\hat C_{2}(\l)\;\;C'_{0}(\l)): H^n\oplus\cH'\to\cH, \qquad \qquad\qquad \qquad\qquad \qquad\qquad  \label{0.11}\\
\qquad \qquad\qquad \qquad \qquad \qquad\qquad  C_1(\l)=(\hat C_{1}(\l)\;\;C'_1(\l)):H^n\oplus \cH'\to\cH .\nonumber
\end{align}
For a given $f\in\gH$ and $\tau(\l)\in\Rh$ consider a boundary value problem
\begin{align}
l[y]-\l y=f\qquad \qquad\qquad\qquad\qquad\label{0.12}\\
\hat C_{1}(\l)y^{(1)}(0)+\hat C_{2}(\l)y^{(2)}(0)+C'_{0}(\l)\G_0' y-C'_{1} (\l)\G_1' y=0.\label{0.13}
\end{align}
A function $y:\D\to H$ is called a solution of the problem \eqref{0.12}, \eqref{0.13} if it belongs to $\cD$ and obeys the equation \eqref{0.12} and the relation \eqref{0.13}.

For a regular expression $l[y]$ (defined on a finite segment $\D=[0,b]$) one can put $\G_0'y=y^{(2)}(b), \G_1'y=y^{(1)}(b)$ \cite{Rof69,HolRof}, in which case the boundary condition \eqref{0.13} takes the "classical"  form (c.f. \cite{KacKre}). For a singular expression $l[y]$ the boundary operators $\G_0'$ and $\G_1'$ can by explicitly defined in terms of limits  of some regularizations of quasi-derivatives $y^{[j]}(t)$ at the point $b$ (see Proposition 3.10 in \cite{Mog}). Hence the relation \eqref{0.13} is a boundary condition given in terms of boundary values $y^{(1)}(0), y^{(2)}(0)$ (at the regular end $0$) and $\G_0' y, \G_1' y$ (at the singular end $b$) of a function $y\in\cD$. Moreover, an application  of V.M. Bruk's results \cite{Bru76} to the decomposing boundary triplet \eqref{0.8} gives the following theorem.
\begin{theorem}\label{th0.1}
For every generalized resolvent $\R$ of the minimal operator $L_0$ there exists the unique operator pair $\tau(\l)\in\Rh$ such that for every $f\in\gH$ the function $y=\R f$ is the unique solution of the boundary value problem \eqref{0.12}, \eqref{0.13}, and, conversely, for each $\tau (\l) \in\Rh$ the relations \eqref{0.12}, \eqref{0.13} define a generalized resolvent of $L_0$.
\end{theorem}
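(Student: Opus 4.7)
The plan is to derive Theorem~\ref{th0.1} by applying V.M. Bruk's abstract parametrization of generalized resolvents via boundary triplets~\cite{Bru76} to the decomposing boundary triplet $\Pi=\bt$ built in~\eqref{0.8}. In the abstract setting Bruk's theorem asserts that for any boundary triplet $\{\cH,\G_0,\G_1\}$ for the adjoint of a closed symmetric operator $A$ with equal deficiency indices, there is a bijection between the generalized resolvents $\R$ of $A$ and the Nevanlinna pairs $\{(C_0(\l),C_1(\l));\cH\}\in\Rh$, realized by the equivalence
\begin{equation*}
y=\R f \;\Longleftrightarrow\; (A^*-\l)y=f \text{ and } C_0(\l)\G_0 y=C_1(\l)\G_1 y.
\end{equation*}
Taking $A=L_0$ (so that $A^*=L$ and the equation $(A^*-\l)y=f$ becomes $l[y]-\l y=f$ with $y\in\cD$) and inserting $\Pi$ gives the bijection claimed in the theorem, provided the abstract relation $C_0(\l)\G_0 y=C_1(\l)\G_1 y$ is identified with the concrete boundary condition~\eqref{0.13}.

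The identification is a short computation. Substituting \eqref{0.8} and the block forms \eqref{0.11} into $C_0(\l)\G_0 y=C_1(\l)\G_1 y$, the left hand side expands via $\G_0 y=\{y^{(2)}(0),\G_0' y\}$ to $\hat C_2(\l)y^{(2)}(0)+C_0'(\l)\G_0'y$, while the right hand side expands via $\G_1 y=\{-y^{(1)}(0),\G_1'y\}$ to $-\hat C_1(\l)y^{(1)}(0)+C_1'(\l)\G_1'y$; moving all terms to one side reproduces~\eqref{0.13} verbatim, and the minus sign inside $\G_1y$ accounts precisely for the $+\hat C_1(\l)y^{(1)}(0)$ and $-C_1'(\l)\G_1'y$ pattern in~\eqref{0.13}. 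The resulting boundary condition depends only on the Nevanlinna pair as an element of $\Rh$, i.e., on the graph of $\tau(\l)$, so the BVP is well defined on $\Rh$. For the uniqueness of the solution at fixed $\tau(\l)$ and $f$, the difference $u=y_1-y_2$ of two solutions lies in $\Ker(L-\l)$ and satisfies $C_0(\l)\G_0 u=C_1(\l)\G_1 u$; since the Nevanlinna property of $\tau(\l)$ combined with the uniform strictness of the Weyl function~\eqref{0.9} guarantees that $\R$ is a single-valued bounded operator for $\l\in\bC_+\cup\bC_-$, this forces $u=0$.

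The main obstacle is notational rather than analytical: one must carefully track the minus sign in $\G_1 y=\{-y^{(1)}(0),\G_1'y\}$ and the ordering of blocks in~\eqref{0.11} so that the abstract relation produces~\eqref{0.13} with the correct signs. A secondary point to verify is that the version of Bruk's theorem used applies to the possibly infinite-dimensional boundary space $\cH=H^n\oplus\cH'$ (as is needed when $\dim H=\infty$); this is covered by the Derkach–Malamud refinement~\cite{DM91,DM00} cited elsewhere in the paper. Beyond these items, the theorem is an immediate translation of the abstract theory using only the fact that $\Pi$ is a bona fide boundary triplet for $L$, which is established in~\cite{Mog}.
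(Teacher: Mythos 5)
Your proposal is correct and follows essentially the same route as the paper: Theorem \ref{th0.1} is obtained there by specializing the abstract boundary-triplet parametrization of generalized resolvents (Theorem \ref{th1.4}, which in the equal-deficiency-index case reduces to Bruk's result, cf. Remark \ref{rem1.5}) to the decomposing boundary triplet \eqref{0.8}, yielding Theorems \ref{th3.1} and \ref{th3.10}, and your expansion of $C_0(\l)\G_0 y - C_1(\l)\G_1 y=0$ into \eqref{0.13}, including the sign bookkeeping, matches the paper's boundary condition \eqref{3.8}/\eqref{3.63a}. Your uniqueness argument via $\cD(\wt A(\l))\cap\Ker(L-\l)=\{0\}$ is also the one implicit in the paper's use of $\l\in\rho(\wt A(\l))$.
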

In view of Theorem \ref{th0.1} formulas \eqref{0.12}, \eqref{0.13} parameterize all generalized resolvents $\R$ by means of a Nevanlinna boundary parameter $\tau(\l)$. In the sequel this fact will be written as $\R=\bR_\tau(\l)$.

The main result of the paper is a description of all characteristic matrices $\Om(\cd)$ of the operator $L_0$ immediately in terms of the boundary parameter $\tau(\l)\in\Rh$. Namely, with every Nevanlinna operator pair $\tau(\l)$ we associate an operator function $\wt\Om_\tau (\cd):\bC_+\cup\bC_-\to [\cH\oplus\cH]$, given by the block-matrix representation
\begin{equation}\label{0.14}
\wt\Om_{\tau }(\l)=\begin{pmatrix} M(\l)-M(\l)(\tau(\l) +M(\l))^{-1}M(\l) & -\tfrac 1 2 I_{\cH} +M(\l) (\tau(\l) +M(\l))^{-1} \cr -\tfrac 1 2 I_{\cH} +(\tau(\l) +M(\l))^{-1}M(\l) & -(\tau(\l) +M(\l))^{-1}\end{pmatrix}
\end{equation}
Next, the inclusion $H^n\subset\cH(=H^n\oplus\cH')$ implies that $H^n\oplus H^n\subset \cH\oplus\cH$. This enables to introduce the operator function $\Om_\tau (\cd):\bC_+\cup\bC_-\to [H^n\oplus H^n]$ by
\begin{equation}\label{0.15}
\Om_\tau (\l)= P_{H^n\oplus H^n}\,\wt \Om_{\tau }(\l)\up H^n\oplus H^n, \;\; \l\in\bC_+\cup\bC_-.
\end{equation}
We show in the paper that for every $\tau(\l)\in\Rh$ the Shtraus-Bruk characteristic matrix $\Om(\cd)$ of the corresponding generalized resolvent $\bR_\tau(\l)$ obeys $\Om(\l)=\Om_\tau(\l)$. Hence the equalities \eqref{0.14} and \eqref{0.15} give a parametrization of all characteristic matrices $\Om(\cd)$ in terms of a boundary parameter $\tau(\cd)$ associated with the boundary value problem \eqref{0.12}, \eqref{0.13}. Moreover, the following theorem shows that the same parametrization can be given in the form similar to that of the well-known Krein-Naimark formula for generalized resolvents \cite{KreLan71}.
\begin{theorem}\label{th0.2}
Let $A_0$ be  a selfadjoint extension of $L_0$ with the domain $\cD (A_0)=\{y\in\cD: y^{(2)}(0)=0, \,\G_0'y=0\}$ and let
\begin{equation*}
S(\l)=\begin{pmatrix} -m(\l)&
-M_{2}(\l) \cr I_{H^n} & 0 \end{pmatrix}:H^n\oplus\cH'\to H^n\oplus H^n,\;\;\;\l\in\bC_+\cup\bC_-,
\end{equation*}
where $m(\l)$ and $M_2(\l)$ are taken from \eqref{0.9}. Then the characteristic matrix of the resolvent $(A_0-\l)^{-1}$ is  $\Om_0(\l)=\begin{pmatrix} m(\l) & -\tfrac 1 2 I_{H^n} \cr -\tfrac 1 2 I_{H^n} & 0 \end{pmatrix}$ and the equality
\begin{equation}\label{0.16}
\Om(\l)(=\Om_\tau(\l))=\Om_0(\l)-S(\l)(\tau(\l)+M(\l))^{-1}S^*(\ov\l), \quad\l\in\bC_+\cup\bC_-
\end{equation}
gives a bijective correspondence between all characteristic matrices $\Om(\l)$ of the operator $L_0$ and all Nevanlinna operator pairs $\tau(\cd)\in\Rh$. Moreover  $\Om(\l)$ is a characteristic matrix of a  canonical resolvent if and only if $\tau(\l)\equiv \tau=\tau^*\; (\l\in\bC_+\cup\bC_-)$.
\end{theorem}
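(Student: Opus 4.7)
The plan is to deduce \eqref{0.16} directly from the block--matrix formula \eqref{0.14} for $\wt\Om_\tau$ together with the projection rule \eqref{0.15}. Once \eqref{0.16} is established, bijectivity and the canonical--resolvent dichotomy follow from Theorem \ref{th0.1} and standard Krein--Naimark features of Nevanlinna families \cite{DM91,DM00}.

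I would begin by identifying the Nevanlinna pair corresponding to $A_0=L\up\Ker\G_0$, which is selfadjoint since $\Pi=\bt$ is a boundary triplet for $L$ \cite{GorGor,DM91}. Under Theorem \ref{th0.1}, the canonical resolvent $(A_0-\l)^{-1}$ corresponds to the constant reference parameter $\tau_0\in\Rh$ for which $(\tau_0(\l)+M(\l))^{-1}\equiv 0$ in \eqref{0.14}. Substituting this into \eqref{0.14} and then projecting according to \eqref{0.15} using the block representation \eqref{0.9} yields
\begin{equation*}
\wt\Om_{\tau_0}(\l)=\begin{pmatrix} M(\l) & -\tfrac12 I_\cH\\ -\tfrac12 I_\cH & 0\end{pmatrix},
\qquad
\Om_0(\l)=\begin{pmatrix} m(\l) & -\tfrac12 I_{H^n}\\ -\tfrac12 I_{H^n} & 0\end{pmatrix},
\end{equation*}
the second of which is the claimed form of the characteristic matrix of $(A_0-\l)^{-1}$.

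Next I would perform the algebraic reshaping of \eqref{0.14}. Introduce the column operator
\begin{equation*}
\wt S(\l)=\begin{pmatrix} -M(\l)\\ I_\cH\end{pmatrix}:\cH\to\cH\oplus\cH.
\end{equation*}
Using $M^*(\ov\l)=M(\l)$, a direct block multiplication gives
\begin{equation*}
\wt S(\l)(\tau(\l)+M(\l))^{-1}\wt S^*(\ov\l)=\begin{pmatrix} M(\tau+M)^{-1}M & -M(\tau+M)^{-1}\\ -(\tau+M)^{-1}M & (\tau+M)^{-1}\end{pmatrix},
\end{equation*}
so subtracting this from $\wt\Om_{\tau_0}$ exactly reproduces \eqref{0.14}. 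Hence
\begin{equation*}
\wt\Om_\tau(\l)=\wt\Om_{\tau_0}(\l)-\wt S(\l)(\tau(\l)+M(\l))^{-1}\wt S^*(\ov\l).
\end{equation*}
Applying $P_{H^n\oplus H^n}$ to both sides turns the left--hand side into $\Om_\tau(\l)$ by \eqref{0.15} and the first term on the right into $\Om_0(\l)$ by the step above; the compressed column operator $P_{H^n\oplus H^n}\wt S(\l)$ equals $S(\l)$ of the theorem, because the first block--row of $-M(\l)$ is $(-m(\l),-M_2(\l))$ by \eqref{0.9} and the first block--row of $I_\cH$ is $(I_{H^n},0)$. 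This is precisely \eqref{0.16}.

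Finally, the paragraph preceding Theorem \ref{th0.2} asserts that \eqref{0.14}--\eqref{0.15} give the Shtraus--Bruk characteristic matrix of the generalized resolvent $\bR_\tau(\l)$ parameterized by $\tau$; composing this with the bijections $\tau\leftrightarrow\bR_\tau$ of Theorem \ref{th0.1} and $\R\leftrightarrow\Om$ inherent in \eqref{0.3}--\eqref{0.4} identifies the map $\tau\mapsto\Om_\tau$ of \eqref{0.16} as a bijection onto the set of all characteristic matrices of $L_0$. Canonicity of $\R$ means that the underlying extension of $L_0$ is selfadjoint inside $\gH$, which in the boundary--triplet formalism is equivalent to $\tau(\l)\equiv\tau=\tau^*$ being a constant selfadjoint relation in $\cH$; this is the boundary--triplet analogue of the Krein--Naimark dichotomy \cite{DM91,DM00}. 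The main obstacle lies in the very first step: pinning down the reference Nevanlinna pair $\tau_0$ satisfying both $\bR_{\tau_0}=(A_0-\l)^{-1}$ and $(\tau_0+M)^{-1}\equiv 0$, since the correspondence between the pair $(C_0(\l),C_1(\l))$ in \eqref{0.11} and the operator/relation $\tau(\l)$ entering the inverse factor of \eqref{0.14} is convention--dependent. Once that identification is settled, the theorem reduces to the block--matrix algebra above.
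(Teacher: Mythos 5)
Your proposal is correct and follows essentially the same route as the paper's own proof (Theorem \ref{th3.6} specialized to Theorem \ref{th3.12}): compress the block formula \eqref{0.14} by $P_{H^n\oplus H^n}$ after rewriting it as $\wt\Om_{\tau_0}(\l)-\wt S(\l)(\tau(\l)+M(\l))^{-1}\wt S^*(\ov\l)$ with $\wt S(\l)=(-M(\l)\;\;I_\cH)^\top$, note $P_{H^n\oplus H^n}\wt S(\l)=S(\l)$, and read off bijectivity from Theorem \ref{th0.1}. The one ``obstacle'' you flag is settled in the paper exactly as you anticipate: $A_0=L\up\Ker\G_0$ corresponds to the constant purely multivalued pair $\tau_0(\l)\equiv\{0\}\oplus\cH$ (i.e.\ $C_0=I_{\cH}$, $C_1=0$ in \eqref{0.11}), for which $\tau_0(\l)+M(\l)=\{0\}\oplus\cH$ and hence $(\tau_0(\l)+M(\l))^{-1}=0$, giving both $\bR_{\tau_0}(\l)=(A_0-\l)^{-1}$ and $\Om_{\tau_0}=\Om_0$.
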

Thus Theorem \ref{th0.2} together with formulas \eqref{0.3} and \eqref{0.4} establishes a connection between  the boundary problem \eqref{0.12}, \eqref{0.13} and the Shtraus' method in the theory of differential operators.

Next by using \eqref{0.16} we prove the inequality
\begin{equation}\label{0.17}
(Im \l)^{-1}\, Im\, \Om_\tau(\l)  \geq  \int_0^b  U_\tau^*(t,\l) U_\tau(t,\l)\, dt, \quad \l\in\bC_+\cup\bC_-,
\end{equation}
which turns into the equality for canonical (orthogonal) characteristic matrices $\Om_\tau(\cd)$. In formula \eqref{0.17} $U_\tau(t,\l)$ is the operator solution of the equation \eqref{0.12}, which is defined by some initial conditions written in terms of the operator functions \eqref{0.11}. The inequality \eqref{0.17} immediately implies that $\Om_\tau(\cd)$ is a Nevanlinna function (c.f. proof of this fact in \cite{Sht57,Bru74}). Observe also that formula \eqref{0.17} is similar to that obtained in \cite{Ber,Gor66} for different classes of boundary value problems.

In the final part of the paper we consider the simplest situation
\begin{equation}\label{0.18}
n_{b+}=n_{b-}=0,
\end{equation}
which in the case $\dim H<\infty$ is equivalent to the relation $n_+(L_0)=n_-(L_0)=n\,\dim H$ (minimality of the deficiency indices). It turns out that under the condition \eqref{0.18} the unique decomposing boundary triplet for $L$ is $\Pi=\{H^n,\G_0,\G_1\}$ with $\G_0 y=y^{(2)}(0), \; \G_1 y=-y^{(1)}(0),\; y\in\cD$. Moreover by formulas \eqref{0.9} and \eqref{0.14}, \eqref{0.15} $M(\l)=m(\l)$ and the characteristic matrix $\Om_\tau (\l)$ is defined by \eqref{0.14} (with $\Om_\tau (\l)$ in place of $\wt\Om_\tau (\l)$ ). The last statement implies that the equality \eqref{0.7} for the Sturm-Liouville operator is a particular case of the general formula \eqref{0.14}. Observe also that the form of the matrices \eqref{0.7} and \eqref{0.14} goes back to I.S. Kac \cite{Kac63}.

In passing with the above main statements  we obtain some results which, to our mind, are of a self-contained interest. In particular we complement and generalize the results from \cite{RofHol84,HolRof} concerning operator fundamental solutions of the equation $l[y]-\l y=0$ and spectrum of differential operators (see Theorems \ref{th1.9a} and \ref{th2.3}). Moreover, in Theorems \ref{th2.8} and \ref{th3.3}  for the case $\dim H\leq\infty$  canonical and generalized resolvents are given in the form  of integral operators with the operator kernel (the Green function) defined immediately in terms of fundamental solutions. We suppose that such a representation of the Green function is new even in the scalar case for an operator $L_0$ with equal intermediate deficiency indices $n<n_+(L_0)=n_-(L_0)<2n$.

In conclusion note that all specified results are actually obtained for differential operators with arbitrary (possibly unequal) deficiency indices by using the method of decomposing $D$-triplets. In this way the known technique of boundary triplets \cite{GorGor,DM91,DM92} becomes slightly more complicated.

\section{Preliminaries}
\subsection{Notations}
The following notations will be used throughout the paper: $\gH$,
$\cH$ denote Hilbert spaces; $[\cH_1,\cH_2]$  is the set of all
bounded linear operators defined on $\cH_1$ with values in $\cH_2$; $[\cH]:=[\cH,\cH]$; $A\up \cL$ is the restriction of an operator $A$ onto the linear manifold $\cL$; $P_\cL$ is the
orthogonal projector in $\gH$ onto the subspace $\cL\subset\gH$;
$\bC_+\,(\bC_-)$ is the upper (lower) half-plain  of the complex
plain.

Recall that a closed linear relation from $\cH_0$ to $\cH_1$ is a closed subspace in $\cH_0 \oplus \cH_1$. The set of all closed linear
relations from $\cH_0$ to $\cH_1$ (from $\cH$ to $\cH$) will be denoted by  $\C
(\cH_0,\cH_1)$ ($\C(\cH)$). A closed linear operator $T$ from $\cH_0$ to $\cH_1$  is identified  with its graph $\text {gr} T\in\CA$.

For a  relation $T \in \C (\cH_0,\cH_1)$ we denote by $\cD(T),\,\cR (T)$ and $\text {Ker}T$  the domain,  range and the kernel  respectively. The inverse $T^{-1}$ and adjoint  $T^*$ are  relations  defined by
\begin{align*}
T^{-1}=\{\{f^{\prime},f\}:\{f,f^{\prime}\}\in T\},\quad
T^{-1}\in \C (\cH_1,\cH_0)\qquad\qquad\qquad\\
T^*=\{\{g,g^{\prime}\}\in \cH_1\oplus \cH_0:(f^{\prime},g)=(f,g^
{\prime}),\ \ \{f,f^{\prime}\}\in T\}, \quad T^*\in
\C(\cH_1,\cH_0).
\end{align*}
In the case $T\in\CA$ we write:

$0\in \rho (T)$\ \ if\ \ $\Ker T=\{0\}$\  and\  $\cR (T)=\cH_1$, or equivalently if $T^{-1}\in [\cH_1,\cH_0]$;

$0\in \hat\rho (T)$\ \ if\ \  $\Ker T=\{0\}$\ and\   $\cR (T)$ is a closed subspace in $\cH_1$;

$0\in \sigma_c (T)$\ \  if\ \  $\Ker T=\{0\}$ and $\ov{\cR (T)}=\cH_1\neq\cR
(T);$

$0\in \sigma_p(T)$\ \  if\ \  $\Ker T\neq\{0\};$\quad $0\in \sigma_r(T)$ \ \ if
\ \ $\Ker T=\{0\}$ and $\ov {\cR (T)}\neq\cH_1$.

For a linear relation $T\in \C(\cH)$ we denote by $\rho (T)=\{\l \in \bC:\ 0\in
\rho (T-\l)\}$\ and $\hat\rho (T)=\{\l \in \bC:\ 0\in \hat\rho (T-\l)\}$ the\textbf{}
resolvent set and the set of regular type points of $T$ respectively. Next,
$\sigma (T)=\bC \backslash \rho (T)$ stands for the spectrum of $T.$ The
spectrum  $\sigma (T)$ admits the following classification:
$\sigma_c(T)=\{\l \in \bC:0\in \sigma_c (T-\l)\}$ is the continuous spectrum;  $\sigma_p (T)=\{\l \in \bC:0\in \sigma_p (T-\l)\}$ is the point spectrum; $\sigma_r (T)=\sigma (T)\setminus (\sigma_p (T)\cup\sigma_c (T))=\{\l \in \bC:0\in \sigma_r (T-\l)\}$ is the residual spectrum.

Let $T\in\C (\cH)$ be a densely defined operator. For any $\l\in\bC$  we put
\begin{equation*}
\gN_\l (T):=\Ker (T^*-\l)\,(=\cH\ominus \cR (T-\ov\l)).
\end{equation*}
If $\ov\l\in\hat\rho (T)$, then $\gN_\l (T)$ is a defect subspace of the operator $T$.

In what follows we will use the operators $J_{\cH_0,\cH_1}\in [\cH_0\oplus\cH_1,\cH_1\oplus\cH_0]$ and $J_\cH\in [\cH\oplus\cH]$ defined by
\begin{equation}\label{1.0.0}
J_{\cH_0,\cH_1}=\begin{pmatrix}0 & -I_{\cH_1} \cr I_{\cH_0} & 0 \end{pmatrix}:\cH_0\oplus\cH_1\to\cH_1\oplus\cH_0 \;\;\;\text {and}\;\;\; J_\cH=J_{\cH,\cH}.
\end{equation}
\subsection{Holomorphic operator pairs }
Let $\Lambda$ be an open set in $\bC$, let $\cK,\cH_0,\cH_1$  be Hilbert spaces and let $K_j(\cd):\Lambda \to [\cK,\cH_j], \; j\in \{0,1\}$ be a pair of holomorphic operator functions. In what follows we identify such a pair with a holomorphic operator function
\begin{equation}\label{1.0}
K(\l)=(K_0(\l)\;\;\; K_1(\l))^\top :\cK\to \cH_0\oplus\cH_1, \quad \l\in \Lambda.
\end{equation}
Similarly a pair of holomorphic  operator functions $C_j(\cd):\Lambda \to [\cH_j,\cK], \; j\in \{0,1\}$ will be identified with an  operator  function
\begin{equation}\label{1.1}
C(\l)=(C_0(\l)\;\;\; C_1(\l)): \cH_0\oplus\cH_1 \to\cK, \quad \l\in \Lambda.
\end{equation}

A pair \eqref{1.0} will be called closed if $\ov{K(\l)\cK}=K(\l)\cK, \;\l\in\Lambda $. Moreover a pair \eqref{1.0} (\eqref{1.1}) will be called admissible if $\Ker K(\l)=\{0\}$ (respectively $\cR (C(\l))=\cK$) for all $\l\in \Lambda$. In the sequel  all pairs  \eqref{1.0} and \eqref{1.1} are admissible unless otherwise stated.
\begin{definition}\label{def1.0}
Two pairs of holomorphic operator functions
\begin{equation*}
K^{(j)} (\l)=\begin{pmatrix} K_0^{(j)}(\l) \cr K_1^{(j)}(\l) \end{pmatrix}: \cK_j \to \cH_0\oplus\cH_1 \;\;\;\;\;  \bigl( C^{(j)}(\l)=( C_0^{(j)}(\l) \;\;\; C_1^{(j)}(\l)): \cH_0\oplus\cH_1 \to\cK_j \bigr),
\end{equation*}
where $j\in\{1,2\}$ and  $\l\in \Lambda$, are said to be equivalent if $K^{(1)} (\l)=K^{(2)} (\l)\f (\l)$ (respectively  $C^{(2)} (\l)=\f (\l) C^{(1)} (\l)$) with a holomorphic isomorphism $\f(\cd): \Lambda\to [\cK_1, \cK_2]$.
\end{definition}
It is clear that the set of all operator pairs \eqref{1.0} (respectively \eqref{1.1}) falls into nonintersecting classes of equivalent pairs.
\begin{definition}\label{def1.0.1}
A function $\tau(\cd):\Lambda\to \CA$ is called holomorphic on $\Lambda$ if there exists a holomorphic operator pair \eqref{1.0} such that $\tau (\l)=K(\l)\cK$ or equivalently
\begin{equation}\label{1.1a}
\tau (\l)=\{K_0(\l), K_1(\l);\cK\}:=\{\{K_0(\l)h,K_1(\l)h\}:h\in\cK\}, \quad \l\in\Lambda.
\end{equation}
\end{definition}
The relation \eqref{1.1a} establishes a bijective correspondence between all holomorphic functions $\tau(\cd):\Lambda\to \CA$ and all equivalence classes of closed holomorphic pairs \eqref{1.0}. Therefore we will identify (by means of \eqref{1.1a}) a holomorphic $\CA$-valued function $\tau (\cd)$ and the corresponding equivalence class of (closed) holomorphic  pairs \eqref{1.0}. Similarly we will identify an equivalence class of  holomorphic operator  pairs \eqref{1.1} and a function $\tau(\cd):\Lambda\to \CA$ given for all $\l\in\Lambda$ by
\begin{equation}\label{1.1b}
\tau (\l)=\{(C_0(\l), C_1(\l));\cK\}:=\{\{h_0,h_1\}\in \cH_0\oplus\cH_1: C_0(\l)h_0+C_1(\l)h_1=0\} .
\end{equation}
It follows from the Liouville's theorem that in the case $\Lambda=\ov\bC$ pairs of  holomorphic operator functions \eqref{1.0} and \eqref{1.1} turn into pairs of operators
\begin{align}
K=(K_0\;\;\; K_1)^\top :\cK\to \cH_0\oplus\cH_1\label{1.2},\\
C=(C_0\;\;\; C_1): \cH_0\oplus\cH_1 \to\cK.\label{1.3}
\end{align}
Moreover the equivalence of such pairs (in the sense of Definition \ref{def1.0}) is realized by a (constant) isomorphism $\f\in [\cK_1,\cK_2]$. Hence $\tau (\l)\equiv\t,\;\l\in\ov\bC$ and \eqref{1.1a} takes the form
\begin{equation}\label{1.4a}
\t=\{K_0,K_1;\cK\}:=\{\{K_0h,K_1h\}:h\in\cK\}.
\end{equation}
Formula \eqref{1.4a} gives a bijective correspondence between all linear relations $\t\in\CA$ and all equivalence classes of closed operator pairs \eqref{1.2}. Similarly the relation
\begin{equation}\label{1.4}
\t=\{(C_0,C_1);\cK\}:=\{\{h_0, h_1\}\in\cH_0\oplus\cH_1:\, C_0h_0+C_1h_1=0\}
\end{equation}
gives a bijective correspondence between all  $\t\in\CA$ and all equivalence classes of  operator pairs \eqref{1.3}. Therefore we will denote by $\CA$ both the set of all closed linear relations  from $\cH_0$ to $\cH_1$ and the set of  all equivalence classes of operator pairs \eqref{1.2} (\eqref{1.3}), identifying them by means of \eqref{1.4a}(\eqref{1.4} respectively).

The following lemma is immediate from Lemma 2.1 in \cite{MalMog02}.
\begin{lemma}\label{lem0.1}
Let $\t=\op\in \CA$, let $\t^*=\{(C_{1*},C_{0*});\cK_*\}\in \CB$ be the adjoint operator pair (linear relation) and let $B\in [\cH_1,\cH_0]$. Then $0\in \rho (C_{1*}+C_{0*}B)\iff 0\in \rho (C_0^*+B C_1^*)$ and the following equality holds
\begin{equation*}
C_1^* (C_0^*+B C_1^*)^{-1}=(C_{1*}+C_{0*}B)^{-1}C_{0*}(=- (\tau^*-B)^{-1}).
\end{equation*}
\end{lemma}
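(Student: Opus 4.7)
The plan is to describe $\tau^*$ in two equivalent forms, invert $\tau^*-B$ from each, and match the results; everything else is bookkeeping in the arithmetic of linear relations.

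First, since $(C_0,C_1)\colon\cH_0\oplus\cH_1\to\cK$ is admissible (i.e.\ surjective), its adjoint $(C_0^*,C_1^*)^\top$ is injective with range $\tau^\perp$, and unpacking the definition of the adjoint linear relation yields the \emph{range form}
\[
\tau^*=\{(C_1^*k,-C_0^*k):k\in\cK\}\subset\cH_1\oplus\cH_0.
\]
Matching this with the given \emph{kernel form} $\tau^*=\{(h_1,h_0):C_{1*}h_1+C_{0*}h_0=0\}$ forces the identity $C_{1*}C_1^*=C_{0*}C_0^*$ on $\cK$; adding $C_{0*}BC_1^*$ to both sides gives the unconditional operator identity
\[
(C_{1*}+C_{0*}B)\,C_1^*=C_{0*}\,(C_0^*+BC_1^*).
\]

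Substituting $h_0'=h_0-Bh_1$ in the kernel form and adjusting the second component in the range form, I obtain two parallel descriptions
\[
(\tau^*-B)^{-1}=\{(h_0',h_1):(C_{1*}+C_{0*}B)h_1=-C_{0*}h_0'\}=\{(-(C_0^*+BC_1^*)k,\,C_1^*k):k\in\cK\}.
\]
Suppose $0\in\rho(C_{1*}+C_{0*}B)$. The first description realizes $(\tau^*-B)^{-1}$ as the graph of the bounded operator $-(C_{1*}+C_{0*}B)^{-1}C_{0*}$ with domain $\cH_0$; combined with the second description, the map $k\mapsto-(C_0^*+BC_1^*)k$ is surjective onto $\cH_0$ and is also injective (since $(C_0^*,C_1^*)^\top$ is injective by admissibility), so the open mapping theorem gives $0\in\rho(C_0^*+BC_1^*)$. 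Equating the two graph descriptions of $(\tau^*-B)^{-1}$ (or, equivalently, inverting the displayed operator identity on both sides) produces
\[
C_1^*(C_0^*+BC_1^*)^{-1}=(C_{1*}+C_{0*}B)^{-1}C_{0*}=-(\tau^*-B)^{-1}.
\]

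For the reverse implication, assume $0\in\rho(C_0^*+BC_1^*)$. The range form realizes $(\tau^*-B)^{-1}$ as the graph of a bounded operator defined on all of $\cH_0$; combined with the kernel form this immediately yields $\ker(C_{1*}+C_{0*}B)=\{0\}$. The main obstacle is the surjectivity of $C_{1*}+C_{0*}B$, because $C_{0*}$ alone need not cover $\cK_*$. The fix is to pick $v\in\cK_*$, use admissibility of $(C_{1*},C_{0*})$ to write $v=C_{1*}h_1+C_{0*}h_0$, set $h_0'=h_0-Bh_1$ so that $(C_{1*}+C_{0*}B)h_1+C_{0*}h_0'=v$, and then subtract the solution $h_1^{(0)}$ of $(C_{1*}+C_{0*}B)h_1^{(0)}=-C_{0*}h_0'$ (available from the first description of $(\tau^*-B)^{-1}$ via the already-constructed $-C_1^*(C_0^*+BC_1^*)^{-1}$) to obtain $h_1-h_1^{(0)}$ as a preimage of $v$. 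The open mapping theorem then delivers $0\in\rho(C_{1*}+C_{0*}B)$, closing the equivalence.
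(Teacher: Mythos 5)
Your proof is correct. The paper itself gives no internal argument here -- it simply declares the lemma ``immediate from Lemma 2.1 in \cite{MalMog02}'' -- so your contribution is a genuinely self-contained verification rather than a variant of the paper's route. Your mechanism (compute $\tau^*$ both as the range of the injective column $(C_1^*\;\,{-C_0^*})^\top$, coming from $\tau^\perp=\cR\bigl((C_0\;\,C_1)^*\bigr)$ with closed range by surjectivity of $(C_0\;\,C_1)$, and as the kernel of the row $(C_{1*}\;\,C_{0*})$; extract the intertwining identity $(C_{1*}+C_{0*}B)C_1^*=C_{0*}(C_0^*+BC_1^*)$; then read off $0\in\rho$ on either side from the two graph descriptions of $(\tau^*-B)^{-1}$) is exactly the kind of duality between ``kernel'' and ``range'' representations of an operator pair that underlies the cited lemma of \cite{MalMog02}, so nothing is lost relative to the reference, and the reader gains an explicit argument. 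Two small points of presentation: in the forward direction, your parenthetical justification of injectivity of $k\mapsto(C_0^*+BC_1^*)k$ cites only the injectivity of $(C_0^*\;\,C_1^*)^\top$, whereas one also needs the single-valuedness of $(\tau^*-B)^{-1}$ established from the first description (if $(C_0^*+BC_1^*)k=0$ then $(0,C_1^*k)$ lies in an operator graph, forcing $C_1^*k=0$, hence $C_0^*k=-BC_1^*k=0$, and only then does injectivity of the column give $k=0$); the ingredients are all on the page, but the dependence should be stated. Second, your surjectivity argument in the reverse direction tacitly uses that the representing row $(C_{1*}\;\,C_{0*})$ of $\tau^*$ is admissible (surjective onto $\cK_*$); this is indeed the paper's standing convention on all pairs, but since it is the crux of that step it deserves an explicit mention.
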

Next recall some results and definitions from our paper \cite{Mog06.1}.

Let $\cH_1$ be a subspace
in a Hilbert space $\cH_0$, let $\cH_2:=\cH_0\ominus\cH_1$ and let $P_j$ be the orthoprojector in $\cH_0$ onto $\cH_j,\; j\in\{1,2\}$. With every linear relation $\t\in\CA$ we associate a $\times$-adjoint linear relation $\t^\times\in\CA$, which is defined as the set of all vectors $\hat k=\{k_0,k_1\}\in \cH_0\oplus\cH_1$ such that
\begin{equation*}
(k_1,h_0)-(k_0,h_1)+i(P_2 k_0,P_2 h_0)=0,\quad \{h_0,h_1\}\in\t
\end{equation*}

Using this definition and the correspondence \eqref{1.4} we introduce the notion of a $\times$-adjoint operator pair (or more precisely a class of $\times$-adjoint  operator pairs) $\t^\times=\{(C_{0\times}, C_{1\times});\cK_\times\}\in\CA$ corresponding to an operator pair $\t=\{(C_0,C_1);\cK\}\in\CA$. As was shown in \cite{Mog06.1} $(\t^\times)^\times=\t$. Moreover Proposition 3.1 in \cite{Mog06.1} implies that the adjoint operator pair $\t^*$ admits the representation $\t^*=\{(C_{1*},C_{0*});\cK_\times\}$ with
\begin{equation}\label{1.5}
C_{1*}=C_{0\times}\up\cH_1, \qquad  C_{0*}= C_{1\times}P_1-i C_{0\times}P_2.
\end{equation}
It follows from  \eqref{1.5} that $\t^\times=\t^*$  in the case $\cH_0=\cH_1:=\cH$.
\begin{definition}\label{def1.0.2}\cite{Mog06.1}
A linear relation $\t\in\CA$ belongs to the class $Dis (\cH_0, \cH_1)$ ($Ac (\cH_0, \cH_1) $) if $\f_\t (\hat h):=2 \, Im (h_1,h_0)+||P_2 h_0||^2\geq 0$ (respectively $\f_\t (\hat h)\leq 0$) for all $\hat h =\{h_0, h_1\} \in\t$ and there are not extensions $\wt \t\supset\t,\;\wt\t\neq\t$ with the same property.
\end{definition}
Note that in the case $\cH_0=\cH_1=:\cH$ the class  $Dis (\cH, \cH)$ ($Ac (\cH, \cH) $) coincides with the set of all maximal dissipative (accumulative) linear relations in $\cH$.

Next assume that $\cK_+'$ and $\cK_-'$ are Hilbert spaces and
\begin{equation}\label{1.6}
\tau_+(\l)=\{K_0(\l),K_1(\l);\cK_+'\}, \;\;\l\in\bC_+; \quad \tau_-(\l)=\{N_0(\l),N_1(\l);\cK_-'\}, \;\;\l\in\bC_-
\end{equation}
are equivalence classes of holomorphic operator pairs
\begin{equation*}
(K_0(\l)\;\;K_1(\l))^\top:\cK_+'\to \cH_0\oplus\cH_1, \;\;\l\in\bC_+;\;\;\; (N_0(\l)\;\;N_1(\l))^\top:\cK_-'\to \cH_0\oplus\cH_1, \;\;\l\in\bC_-
\end{equation*}
with the block-matrix representations
\begin{equation*}
K_0(\l)=\begin{pmatrix} K_{01}(\l)\cr K_{02}(\l)\end{pmatrix}:\cK_+'\to \cH_1\oplus\cH_2, \;\;\;  N_0(\l)=\begin{pmatrix}N_{01}(\l)\cr N_{02}(\l)\end{pmatrix}:\cK_-'\to \cH_1\oplus\cH_2.
\end{equation*}
Similarly let  $\cK_+$ and $\cK_-$ be  Hilbert spaces and let
\begin{equation}\label{1.7}
\tau_+(\l)=\{(C_0(\l),C_1(\l));\cK_+\}, \;\;\l\in\bC_+; \;\;\;\; \tau_-(\l)=\{(D_0(\l),D_1(\l));\cK_-\}, \;\;\l\in\bC_-
\end{equation}
be equivalence classes of holomorphic operator pairs
\begin{equation*}
(C_0(\l)\;\;C_1(\l)):\cH_0\oplus\cH_1\to\cK_+ , \;\;\l\in\bC_+;\;\; (D_0(\l)\;\;D_1(\l)):\cH_0\oplus\cH_1\to\cK_-, \;\;\l\in\bC_-
\end{equation*}
with the block-matrix representations
\begin{equation*}
C_0(\l)=(C_{01}(\l)\;\;C_{02}(\l)):\cH_1\oplus\cH_2\to\cK_+ , \;\;\;  D_0(\l)=(D_{01}(\l)\;\;D_{02}(\l)):\cH_1\oplus\cH_2\to\cK_-.
\end{equation*}
\begin{definition}\label{def1.0.3}
1) A collection $\pair$ of two holomorphic  operator pairs \eqref{1.6} (or more precisely of two equivalence classes of operator pairs  \eqref{1.6}) belongs to the Nevanlinna type class $\RH$ if it obeys the relations
\begin{align}
2\,Im(K_{01}^*(\l)K_1(\l))-K_{02}^*(\l)K_{02}(\l)\geq 0, \quad 0\in\rho (K_1(\l)+iK_{01}(\l)), \;\;\; \l\in\bC_+\label{1.7a}\\
2\,Im(N_{01}^*(\l)N_1(\l))-N_{02}^*(\l)N_{02}(\l)\leq 0, \quad 0\in\rho (N_1(\l)-iN_0(\l)), \;\; \l\in\bC_-\quad\label{1.7c}\\
N_1^*(\ov\l)K_{01}(\l)-N_{01}^*(\ov\l)K_{1}(\l)+iN_{02}^*(\ov\l)K_{02} (\l)=0, \;\;\; \l\in\bC_+.\qquad\qquad\label{1.7d}
\end{align}

2) A collection $\pair$ of two holomorphic  operator pairs \eqref{1.7} is referred to the class $\RH$ if
\begin{align}
2\,Im(C_{1}(\l)C_{01}^*(\l))+C_{02}(\l)C_{02}^*(\l)\geq 0, \quad 0\in\rho (C_0(\l)-iC_{1}(\l)P_1), \;\;\; \l\in\bC_+\label{1.7e}\\
2\,Im(D_1(\l)D_{01}^*(\l))+D_{02}(\l)D_{02}^*(\l)\leq 0, \quad 0\in\rho (D_{01}(\l)+iD_1(\l)), \;\;\; \l\in\bC_-\label{1.7f}\\
C_1(\l)D_{01}^*(\ov\l)-C_{01}(\l)D_{1}^*(\ov\l)+iC_{02}(\l)D_{02}^* (\ov\l)=0, \;\;\; \l\in\bC_+.\qquad\qquad\label{1.7g}
\end{align}
\end{definition}
Note that the above definition of the class $\RH$ slightly differs from that contained in \cite{Mog06.1}. At the same time by methods similar to \cite{Mog06.1}, Proposition 4.3 one can prove that \eqref{1.6} defines a pair $\pair\in\RH$ if and only if $\tau_\pm(\cd):\bC_\pm\to \CA$ are holomorphic functions obeying $-\tau_+(\l)\in Ac(\cH_0,\cH_1), \; \l\in\bC_+$ and $-\tau_-(\l)=(-\tau_+(\ov\l))^\tm, \; \l\in \bC_-$. This and \cite{Mog06.1}, Proposition 3.1, 4) imply that collections \eqref{1.6} and \eqref{1.7} are associated by $\cK_\pm=\cK'_\mp$ and
\begin{align}
C_0(\l)=(N_1^*(\ov\l)\;\;iN_{02}^*(\ov\l)):\cH_1\oplus\cH_2\to\cK'_-, \quad C_1(\l)=-N_{01}^*(\ov\l), \quad \l\in\bC_+ \label{1.8}\\
D_0(\l)=(K_1^*(\ov\l)\;\;iK_{02}^*(\ov\l)):\cH_1\oplus\cH_2\to\cK'_+, \quad D_1(\l)=-K_{01}^*(\ov\l), \quad \l\in\bC_- \label{1.8a}
\end{align}
define the same functions $\tau_\pm (\cd)$. Hence the relations \eqref{1.8} and \eqref{1.8a} give a bijective correspondence between collections \eqref{1.6} and \eqref{1.7}, which makes it possible to identify such collections (connected by \eqref{1.8} and \eqref{1.8a}).

One can prove that for a pair $\pair\in\RH$ the following statements are equivalent:

(a) $2\,Im(K_{01}^*(\l)K_1(\l))-K_{02}^*(\l)K_{02}(\l)= 0$ and $0\in\rho (K_1(\l)-iK_{0}(\l))$ for some  $\l\in\bC_+$;

(b) $2\,Im(N_{01}^*(\l)N_1(\l))-N_{02}^*(\l)N_{02}(\l)= 0$ and $0\in\rho (N_1(\l)+iN_{01}(\l))$ for some  $\l\in\bC_-$;

(c) $2\,Im(C_{1}(\l)C_{01}^*(\l))+C_{02}(\l)C_{02}^*(\l)= 0$ and $0\in\rho (C_{01}(\l)+iC_{1}(\l))$ for some  $\l\in\bC_+$;

(d) $2\,Im(D_1(\l)D_{01}^*(\l))+D_{02}(\l)D_{02}^*(\l)= 0$ and $0\in\rho (D_{0}(\l)-iD_{1}(\l)P_1)$ for some  $\l\in\bC_-$;

(e) the relations (a) -- (d) hold for all $\l$ in the corresponding half-planes.
\begin{definition}\label{def1.0.4}
A collection $\pair\in\RH$ is referred to the class $\RZ$ if it obeys at least one (and hence each) of the conditions (a) -- (e).
\end{definition}
It is easy to see that a collection $\pair$ of two holomorphic operator pairs \eqref{1.6} (or, equivalently, \eqref{1.7}) belongs to the class $\RZ$ if and only if $\tau_+(\l)\equiv \tau_-(\l)\equiv \t$, where $\t\in\CA$ and $(-\t)^\tm =-\t$. Therefore a pair $\pair\in\RZ$ admits the constant-valued representation
\begin{equation*}
\tau_\pm(\l)=\{K_0,K_1;\cK'\}=\{(C_0,C_1);\cK\}(=\t\in\CA), \quad \l\in\bC_\pm.
\end{equation*}
Moreover the set $\RZ$ is not empty if and only if $\dim\cH_0= \dim\cH_1$.
\begin{remark}\label{rem1.0.5}
1) Let  \eqref{1.6} be a representation of a pair $\pair\in\RH$. Then $\dim\cK'_+= \dim\cH_1, \; \dim\cK'_-= \dim\cH_0$ and, therefore, Hilbert spaces $\cK'_+$ and $\cK'_-$ can be chosen equal only in the case $\dim\cH_1=\dim\cH_0$. This explains the presence of different spaces $\cK'_+$ and $\cK_-'$ in \eqref{1.6} (respectively $\cK_+$ and $\cK_-$ in \eqref{1.7}). Observe also that in the case $\cK_+=\cK_-=:\cK$ a collection $\{\tau_+(\l), \tau_-(\l)\}$ of two operator pairs \eqref{1.7} can be considered as the unique holomorphic operator pair defined on $\bC_+\cup\bC_-$.

2) In the case $\cH_1=\cH_0=:\cH$ the class $\wt R(\cH):=\wt R (\cH,\cH)$ coincides with the well known class of Nevanlinna functions (holomorphic operator pairs) with values in $\CG$ (see for instance \cite{KreLan71,DM00}). More precisely, the equality
\begin{equation*}
\tau(\l)=\begin{cases} \tau_+(\l), \;\;\l\in\bC_+ \cr \tau_-(\l), \;\;\l\in\bC_-\end{cases}
\end{equation*}
gives a bijective correspondence between all collections $\pair\in \Rh$ and all Nevanlinna functions $\tau(\cd):\bC_+\cup\bC_-\to\CG$.

Letting in \eqref{1.7} $\cK_+=\cK_-=:\cK$ and $C_j(\l):=D_j(\l),\;\l\in\bC_-,\; j\in\{0,1\}$ one can represent
a Nevanlinna function $\tau(\cd)\in\Rh$ as
\begin{equation}\label{1.9}
\tau(\l)=\{(C_0(\l),C_1(\l));\cK\}, \quad \l\in\bC_+\cup\bC_.
\end{equation}
Formula \eqref{1.9} gives a bijective correspondence between all functions $\tau(\cd)\in\Rh$ and all equivalence classes of holomorphic operator pairs $(C_0(\l)\;\;C_1(\l)):\cH\oplus\cH\to\cK$ obeying
\begin{align*}
Im\,\l\cd Im(C_1(\l)C_0^*(\l))\geq 0, \qquad 0\in\rho (C_0(\l)-i\,\text{sgn}(Im\l) C_1(\l)),\\
C_1(\l)C_0^*(\ov\l)-C_0(\l)C_1^*(\ov\l)=0, \quad \l\in\bC_+\cup\bC_.\qquad\qquad
\end{align*}
(these relations follows from \eqref{1.7e}--\eqref{1.7g}). Similarly formulas \eqref{1.6} and \eqref{1.7a}--\eqref{1.7d} imply that the equality
\begin{equation}\label{1.9c}
\tau(\l)=\{K_0(\l),K_1(\l);\cK'\},\quad \l\in\bC_+\cup\bC_-
\end{equation}
gives a bijective correspondence between all functions $\tau(\cd)\in \Rh$ and all equivalence classes of holomorphic operator pairs $(K_0(\l)\;\;K_1(\l))^\top:\cK'\to\cH\oplus\cH$ obeying
\begin{align*}
Im\,\l\cd Im(K_0^*(\l)K_1(\l))\geq 0, \qquad 0\in\rho (K_1(\l)+i\,\text{sgn}(Im\l) K_0(\l)),\\
K_1^*(\ov\l)K_0(\l)-K_0^*(\ov\l)K_1(\l)=0, \quad \l\in\bC_+\cup\bC_-.\qquad\qquad
\end{align*}
Moreover in view of \eqref{1.8} and \eqref{1.8a} the connection between representations \eqref{1.9} and \eqref{1.9c} of the same function $\tau(\cd)\in \Rh$ is given by
\begin{equation*}
C_0(\l)=K_1^*(\ov\l), \quad C_1(\l)=-K_0^*(\ov\l),\quad \l\in\bC_+\cup\bC_-.
\end{equation*}
In the similar way one can reformulate the conditions (a)--(e) of Definition \ref{def1.0.4} for the class $\wt R^0(\cH):=\wt R^0(\cH,\cH)$.
In particular, a function $\tau (\cd):\bC_+\cup\bC_-\to \CG$ belongs to the class $\wt R^0(\cH)$ if and only if $\tau(\l)\equiv\t=\t^*(\in \CG), \;\l\in\bC_+\cup\bC_-$.
\end{remark}
\subsection{Boundary triplets and Weyl functions}
Let $A$ be a closed densely defined symmetric  operator in $\gH$. In what follows we will use the following notations:

$n_\pm (A):=\dim \gN_\l(A) \;\; (\l\in\bC_\pm)$ are deficiency indices of $A$;

$\exa$ is the set of all proper extensions of $A$, i.e., the set of all closed operators $\wt A$ in $\gH$ such that $A\subset\wt A\subset A^*$.

 Let $\cH_0$ be  a Hilbert space, let $\cH_1$ be a subspace
in $\cH_0$ and  let $\cH_2:=\cH_0\ominus\cH_1$. Denote by $P_j$  the
orthoprojector in $\cH_0$ onto $\cH_j,\; j\in\{1,2\}$.
\begin{definition}\cite{Mog06.2}\label{def1.1}
A collection $\Pi=\bta$, where $\G_j$ are linear mappings from $\cD (A^*)$ to
$\cH_j\; (j\in\{0,1\})$, is called a $D$-boundary triplet (or briefly
$D$-triplet) for $A^*$, if $\G=(\G_0\;\;\G_1)^\top :\cD (A^*)\to
\cH_0\oplus\cH_1 $ is a surjective linear mapping  onto $\cH_0\oplus\cH_1$ and
the following Green's identity holds
\begin {equation}\label{1.10}
(A^*f,g)-(f,A^*g)=(\G_1 f,\G_0 g)- (\G_0 f,\G_1 g)+i(P_2\G_0 f,P_2\G_0 g),
\quad f,g \in \cD (A^*).
\end{equation}
\end{definition}
As was shown in \cite{Mog06.2} a $D$-triplet $\bta$ for $A^*$ obeys the relation $\dim \cH_1=n_-(A)\leq n_+(A)=\dim \cH_0$. Moreover the equalities
\begin {equation}\label{1.11}
\cD (A_0):=\Ker \G_0=\{f\in\cD (A^*):\G_0 f=0\}, \qquad A_0=A^* \up \cD (A_0)
\end{equation}
define a maximal symmetric extension $A_0\in\exa$ with $n_-(A_0)=0$.

It turns out that for every $\l\in\bC_+\;(z\in\bC_-)$ the map $\G_0\up \gN_\l (A)\;(P_1\G_0\up \gN_z (A))$
 is an isomorphism. This makes it possible to introduce the operator functions ($\g$-fields) $\g_+(\cdot):\Bbb
C_+\to[\cH_0,\gH], \; \; \g_-(\cdot):\Bbb C_-\to[\cH_1,\gH]$ and the Weyl functions $M_+(\cdot):\bC_+\to [\cH_0,\cH_1], \;\; M_-(\cdot):\bC_-\to [\cH_1,\cH_0]$ by
\begin{align}
\g_+ (\l)=(\G_0\up\gN_\l (A))^{-1}, \;\;\;\l\in\Bbb C_+;\qquad
\g_- (z)=(P_1\G_0\up\gN_z (A))^{-1}, \;\;\; z\in\Bbb C_-,\label{1.13}\\
\G_1 \up \gN_\l (A)=M_+(\l)\G_0 \up \gN_\l (A),\quad \l\in\bC_+,\label{1.14}\qquad\qquad\qquad\\
(\G_1+iP_2\G_0)\up \gN_z (A)=M_-(z)P_1\G_0 \up \gN_z (A),\quad z\in
\bC_-.\qquad\qquad\label{1.15}
\end{align}
According to \cite{Mog06.2} all functions $\g_\pm$ and $M_\pm $ are holomorphic
on their domains and $M_+^*(\l)=M_-(\ov \l), \;\l\in\bC_+$. Moreover the following relation holds
\begin {equation}\label{1.15.0}
M_+(\mu)-M_+^*(\l)P_1+iP_2=(\mu-\ov\l)\g_+^*(\l)\g_+(\mu), \qquad \mu,\l\in\bC_+.
\end{equation}
\begin{lemma}\label{lem1.2}
Assume that $\bta$ is a $D$-triplet for $A^*$, $\t=\op\in\CA$ is an operator pair  \eqref{1.3}, $\t^\tm=\{(C_{0\tm},C_{1\tm}); \cK_\tm\}\in\CA$ is the $\tm$-adjoint pair and $C_{1*}, C_{0*}$ are operators \eqref{1.5}. Then
\begin{align}
(C_0\G_0+C_1\G_1)\g_+(\l)=C_0+C_1 M_+(\l), \;\;\;\l\in\bC_+ \label{1.15.1}\\
(C_{0\tm}\G_0+C_{1\tm}\G_1)\g_-(z)=C_{1*}+C_{0*} M_-(z), \;\;\;z\in\bC_-.
\end{align}
\end{lemma}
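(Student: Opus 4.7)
The plan is to verify each identity by evaluating both sides on a generic vector and unwinding the definitions of the $\g$-fields and the Weyl functions. For the first identity, fix $h_0 \in \cH_0$ and $\l \in \bC_+$, and set $f_\l := \g_+(\l) h_0$. Then \eqref{1.13} gives $f_\l \in \gN_\l(A)$ with $\G_0 f_\l = h_0$, and \eqref{1.14} yields $\G_1 f_\l = M_+(\l) h_0$. Substituting into $C_0\G_0 + C_1\G_1$ produces $C_0 h_0 + C_1 M_+(\l) h_0$, which proves the first identity since $h_0$ is arbitrary.

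For the second identity, fix $h_1 \in \cH_1$ and $z \in \bC_-$, and set $f_z := \g_-(z) h_1 \in \gN_z(A)$, so that $P_1\G_0 f_z = h_1$ by \eqref{1.13}. Relation \eqref{1.15} then reads
\[
\G_1 f_z + i P_2 \G_0 f_z \;=\; M_-(z) h_1 \;\in\; \cH_0 = \cH_1 \oplus \cH_2.
\]
Since $\G_1 f_z \in \cH_1$ and $i P_2\G_0 f_z \in \cH_2$, projecting onto the two summands separately gives $\G_1 f_z = P_1 M_-(z) h_1$ and $P_2\G_0 f_z = -i P_2 M_-(z) h_1$, whence $\G_0 f_z = h_1 - i P_2 M_-(z) h_1$. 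Substituting these expressions into $C_{0\tm}\G_0 f_z + C_{1\tm}\G_1 f_z$ and grouping the coefficients of $h_1$ and $M_-(z) h_1$ yields
\[
(C_{0\tm}\up\cH_1)\, h_1 \;+\; (C_{1\tm} P_1 - i C_{0\tm} P_2)\, M_-(z) h_1 \;=\; C_{1*} h_1 + C_{0*} M_-(z) h_1,
\]
where the last equality is just the definitions \eqref{1.5} of $C_{1*}$ and $C_{0*}$. Since $h_1 \in \cH_1$ is arbitrary, this proves the second identity.

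The main (and essentially only) bookkeeping step is the orthogonal decomposition of \eqref{1.15} along $\cH_0 = \cH_1 \oplus \cH_2$ to extract $\G_1 f_z$ and $P_2 \G_0 f_z$ separately; everything else is a mechanical substitution. The formulas \eqref{1.5} defining $C_{1*}$ and $C_{0*}$ are tailored to make the final grouping collapse exactly to $C_{1*} + C_{0*} M_-(z)$, so no conceptual input beyond careful handling of the projections $P_1, P_2$ is required.
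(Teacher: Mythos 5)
Your proof is correct and follows essentially the same route as the paper: both unwind \eqref{1.13}--\eqref{1.15} to get $\G_0\g_+(\l)=I_{\cH_0}$, $\G_1\g_+(\l)=M_+(\l)$, $P_1\G_0\g_-(z)=I_{\cH_1}$, $P_2\G_0\g_-(z)=-iP_2M_-(z)$, $\G_1\g_-(z)=P_1M_-(z)$, and then substitute using the definitions \eqref{1.5}. The only difference is cosmetic: you derive the last two identities by projecting \eqref{1.15} onto $\cH_1\oplus\cH_2$ (using that $\G_1$ takes values in $\cH_1$), a step the paper states without comment.
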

\begin{proof}
It follows from \eqref{1.13} -- \eqref{1.15} that
\begin{align}
\G_0\g_+(\l)=I_{\cH_0}, \quad \G_1\g_+(\l)=M_+(\l), \quad \l\in\bC_+\qquad\qquad\qquad\qquad\qquad \label{1.15.3}\\
P_1\G_0\g_-(z)=I_{\cH_1}, \quad P_2\G_0\g_-(z)=-iP_2M_-(z), \quad \G_1\g_-(z)=P_1M_-(z), \quad z\in\bC_-.\label{1.15.4}
\end{align}
The equality \eqref{1.15.1} is immediate from \eqref{1.15.3}. Moreover \eqref{1.15.4} yields
\begin{align*}
(C_{0\tm}\G_0+C_{1\tm}\G_1)\g_-(z)=C_{0\tm}P_1\G_0\g_-(z)+C_{0\tm}P_2 \G_0\g_-(z)+C_{1\tm}\G_1\g_-(z)=\\
C_{0\tm}\up\cH_1+(C_{1\tm}P_1-i C_{0\tm}P_2)M_-(z)= C_{1*}+C_{0*} M_-(z), \quad z\in\bC_-.
\end{align*}
\end{proof}
The combination of Proposition 4.1 and Theorem 4.2 from \cite{Mog06.2} gives the following theorem.
\begin{theorem}\label{th1.3}
Suppose that  $\Pi=\bta$ is a $D$-triplet for $A^*$, $\t=$ $\op$ $\in\CA$ is an operator pair and $\wt A\in\exa$ is an extension defined by the abstract boundary condition
\begin{equation}\label{1.16}
\cD (\wt A)=\{f\in\cD (A^*):\,C_0\G_0 f +C_1\G_1 f=0\}, \quad \wt A=A^*\up \cD (\wt A)
\end{equation}
Then $\l\in\rho (\wt A)\cap \bC_+\iff 0\in\rho (C_0+C_1 M_+(\l))$ and the following Krein type formula for  canonical resolvents holds
\begin{equation}\label{1.17}
(\wt A-\l)^{-1}=( A_0-\l)^{-1}-\g_+(\l)(C_0+C_1 M_+(\l))^{-1}C_1 \g_-^*(\ov\l), \quad \l\in \rho (\wt A)\cap \bC_+.
\end{equation}
\end{theorem}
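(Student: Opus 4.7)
The plan is to solve $(\widetilde A - \lambda) f = y$ for arbitrary $y \in \gH$ and $\lambda \in \bC_+$ via the Ansatz $f = (A_0 - \lambda)^{-1} y + \gamma_+(\lambda) h$ with $h \in \cH_0$ to be determined. This decomposition is meaningful because $A_0$ is maximal symmetric with $n_-(A_0) = 0$, so $\bC_+ \subset \rho(A_0)$. Any such $f$ automatically satisfies $(A^* - \lambda) f = y$, and the only task left is to choose $h$ so that the boundary condition $C_0 \Gamma_0 f + C_1 \Gamma_1 f = 0$ holds. Using $\Gamma_0 (A_0 - \lambda)^{-1} y = 0$ together with $\Gamma_0 \gamma_+(\lambda) = I_{\cH_0}$ and $\Gamma_1 \gamma_+(\lambda) = M_+(\lambda)$ (see \eqref{1.15.3}), this condition collapses to the operator equation
\[
(C_0 + C_1 M_+(\lambda))\, h = - C_1 \, \Gamma_1 (A_0 - \lambda)^{-1} y.
\]

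The key preparatory identity is $\Gamma_1 (A_0 - \lambda)^{-1} = \gamma_-^*(\overline{\lambda}) \in [\gH, \cH_1]$. I would derive it by plugging $f_0 = (A_0 - \lambda)^{-1} y$ and $\phi = \gamma_-(\overline{\lambda}) h_1$, $h_1 \in \cH_1$ arbitrary, into the modified Green's identity \eqref{1.10}. Using $A^* f_0 = y + \lambda f_0$ and $A^* \phi = \overline{\lambda} \phi$, the left-hand side evaluates to $(y, \phi) = (\gamma_-^*(\overline{\lambda}) y, h_1)_{\cH_1}$. On the right-hand side the vanishing of $\Gamma_0 f_0$ kills both the $(\Gamma_0 f_0, \Gamma_1 \phi)$ term and the extra $i(P_2 \Gamma_0 f_0, P_2 \Gamma_0 \phi)$ contribution peculiar to $D$-triplets; combined with $\Gamma_1 f_0 \in \cH_1 \perp \cH_2$ and the formulas \eqref{1.15.4}, what remains is precisely $(\Gamma_1 f_0, h_1)_{\cH_1}$. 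Varying $h_1$ yields the identity.

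With this in hand the boundary equation becomes $(C_0 + C_1 M_+(\lambda)) h = -C_1 \gamma_-^*(\overline{\lambda}) y$. If $0 \in \rho (C_0 + C_1 M_+(\lambda))$, solving uniquely for $h$ and substituting into the Ansatz produces the Krein-type formula \eqref{1.17} and shows $\lambda \in \rho(\widetilde A)$. For the converse I would prove injectivity and surjectivity of $L(\lambda) := C_0 + C_1 M_+(\lambda)$ separately. Injectivity: any nonzero $h_0 \in \Ker L(\lambda)$ yields a nonzero $\gamma_+(\lambda) h_0 \in \cD(\widetilde A) \cap \gN_\lambda(A)$, hence $\lambda \in \sigma_p(\widetilde A)$, contradicting $\lambda \in \rho(\widetilde A)$. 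Surjectivity uses the admissibility $\cR(C) = \cK$: given $k \in \cK$, write $k = C_0 h_0 + C_1 h_1$, lift $(h_0, h_1)$ to some $f \in \cD(A^*)$ via the surjectivity of $(\Gamma_0, \Gamma_1)^\top$, set $y = (A^* - \lambda) f$, use $(\widetilde A - \lambda)^{-1}$ to produce $g \in \cD(\widetilde A)$ with $(\widetilde A - \lambda) g = y$, and observe that $u := f - g \in \gN_\lambda(A) = \gamma_+(\lambda)\cH_0$ has $\gamma_+$-parameter $h'' = \Gamma_0 u$ solving $L(\lambda) h'' = k$.

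The main obstacle is the compatibility identity $\Gamma_1 (A_0 - \lambda)^{-1} = \gamma_-^*(\overline{\lambda})$. The asymmetry $\cH_1 \subsetneq \cH_0$ together with the extra $i(P_2 \Gamma_0 \cdot, P_2 \Gamma_0 \cdot)$ term in Green's identity \eqref{1.10} demand careful bookkeeping, but the test function $\phi = \gamma_-(\overline{\lambda}) h_1 \in \gN_{\overline{\lambda}}(A)$ is tailored precisely so the problematic terms drop out. Once this identity is in place, everything else mirrors the classical Derkach-Malamud derivation of Krein-type resolvent formulas for ordinary boundary triplets.
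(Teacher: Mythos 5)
Your proof is correct. The paper itself gives no argument for Theorem \ref{th1.3} beyond citing Proposition 4.1 and Theorem 4.2 of \cite{Mog06.2}, so there is nothing in the text to compare against; your derivation is the standard Derkach--Malamud scheme, correctly adapted to the $D$-triplet setting. In particular, the key identity $\G_1(A_0-\l)^{-1}=\g_-^*(\ov\l)$ is obtained exactly as you describe: with $\phi=\g_-(\ov\l)h_1$ one has $\G_0\phi=h_1-iP_2M_-(\ov\l)h_1$ by \eqref{1.15.4}, and since $\G_1 f_0\in\cH_1\perp\cH_2$ the term $(\G_1 f_0,\G_0\phi)$ reduces to $(\G_1 f_0,h_1)_{\cH_1}$ while $\G_0 f_0=0$ kills the remaining terms of \eqref{1.10}; the bijectivity argument for $C_0+C_1M_+(\l)$ (injectivity via $\Ker(\wt A-\l)$, surjectivity via admissibility of the pair and surjectivity of $\G$) is also sound.
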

Next recall the following definition.
\begin{definition}\label{def1.3a}
An operator function $\R:\bC_+\cup\bC_-\to\gH$ is called a generalized
resolvent of a symmetric operator  $A$, if there exist a
Hilbert space $\wt \gH\supset\gH$ and a self-adjoint operator $\wt A$ in $\wt\gH$ such that $A\subset \wt A$ and $\R =P_\gH (\wt A- \l)^{-1}\vert \gH,
\;\; \l \in \bC_+\cup\bC_-$.
\end{definition}
In the following theorem the description of all generalized resolvents is given in terms of abstract boundary conditions.
\begin{theorem}\label{th1.4}\cite{Mog06.2}
Let $\Pi=\bta$ be $D$-triplet for $A^*$. Then the relations
\begin{align}
\cD (\wt A(\l))= \left \{ \begin{array} {l}
\{f\in \cD (A^*):C_0(\l)\G_0 f-C_1(\l)\G_1 f=0\}, \;\; \l\in\bC_+\\
\{f\in \cD (A^*):D_0(\l)\G_0 f-D_1(\l)\G_1 f=0\}, \;\; \l\in\bC_- \end{array}\right. ,\;\;\;\; \wt A(\l)\in\exa \label{1.18}\\
\R=(\wt A(\l)-\l)^{-1}, \quad \l\in \bC_+\cup\bC_-\qquad\qquad\qquad\qquad \label{1.19}
\end{align}
establish a bijective correspondence between all  generalized resolvents $\R$ of $A$ and all collections of holomorphic operator pairs $\pair\in\RH$ defined by \eqref{1.7}. Moreover $\R$  is a canonical
resolvent if and only if $\tau  \in \RZ $.
\end{theorem}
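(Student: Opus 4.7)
The plan is to reduce Theorem \ref{th1.4} to the Shtraus parametrization \eqref{0.1} of generalized resolvents via holomorphic families $\wt A(\l)\in\exa$ satisfying $Im\,\l\cd Im\,\wt A(\l)\geq 0$ and $(\wt A(\l))^*=\wt A(\ov\l)$, and then to dictionarize these three analytic conditions into the algebraic conditions \eqref{1.7e}--\eqref{1.7g} via the $D$-triplet $\Pi$. The starting point is the surjectivity of $\G=(\G_0\;\;\G_1)^\top$: by Definition \ref{def1.1} every proper extension $\wt A\in\exa$ arises from an abstract boundary condition of the form \eqref{1.16}, and hence corresponds (uniquely up to equivalence) to an operator pair $\{(C_0,C_1);\cK\}\in\CA$. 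Applying this $\l$-wise to the Shtraus family yields holomorphic operator pairs $\tau_+(\l)$ for $\l\in\bC_+$ and $\tau_-(\l)$ for $\l\in\bC_-$ as in \eqref{1.7}; the sign convention in \eqref{1.18} (minus, not plus as in \eqref{1.16}) matches the sign convention in Definition \ref{def1.0.2} so that the dissipativity of $\wt A(\l)$ corresponds to $-\tau_+(\l)\in Ac(\cH_0,\cH_1)$ rather than the accumulative property of $\tau_+(\l)$ itself.

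Next I would translate each of the three Shtraus conditions. For $f\in\cD(\wt A(\l))$, Green's identity \eqref{1.10} gives
$2\,Im(\wt A(\l)f,f)=2\,Im(\G_1 f,\G_0 f)+\|P_2\G_0 f\|^2$; combined with the boundary relation $C_0(\l)\G_0 f=C_1(\l)\G_1 f$ and the block decomposition of $C_0(\l)$, this form-positivity is exactly the inequality \eqref{1.7e} (the maximality of $-\tau_+(\l)$ in $Ac(\cH_0,\cH_1)$ then forces the range-normalization $0\in\rho (C_0(\l)-iC_1(\l)P_1)$). The Shtraus symmetry $(\wt A(\l))^*=\wt A(\ov\l)$ translates into a relation between $\tau_+(\l)$ and $\tau_-(\ov\l)$ via the $\tm$-adjoint pair construction preceding Definition \ref{def1.0.2}; working this out in the block representation of the pairs yields precisely the cross-relation \eqref{1.7g}. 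Finally, for \eqref{1.19} to produce a bona fide resolvent we need $\l\in\rho (\wt A(\l))$, which by the Krein-type formula of Theorem \ref{th1.3} (applied with the sign-flipped pair) is automatic once $0\in\rho (C_0(\l)-iC_1(\l)P_1)$ holds, thanks to Lemma \ref{lem1.2} and identity \eqref{1.15.1}.

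For bijectivity, the forward map (generalized resolvent $\mapsto$ class of $\pair$) is well-defined by the equivalence realization in Definition \ref{def1.0}; uniqueness of the equivalence class follows from injectivity of the passage $\wt A(\l)\mapsto\{(C_0(\l),C_1(\l));\cK\}$ guaranteed by surjectivity of $\G$. The inverse map is constructed by reading backwards: given $\pair\in\RH$, define $\wt A(\l)$ by \eqref{1.18}, verify holomorphy of $\wt A(\l)$ (amounts to holomorphy of the pair representatives plus Theorem \ref{th1.3}), and check that the three Shtraus properties hold by reversing the dictionary of the previous paragraph. The canonical case then falls out from Definition \ref{def1.0.4}: $\R$ is canonical iff the Shtraus family is constant, iff (by Liouville applied to the equivalence classes, as noted after \eqref{1.3}--\eqref{1.4}) the pair $\tau$ admits a constant representative $\t\in\CA$ with $(-\t)^\tm=-\t$, which is exactly the description of $\RZ$ given after Definition \ref{def1.0.4}.

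The main obstacle will be carefully matching the block-matrix Nevanlinna conditions \eqref{1.7e}--\eqref{1.7g} with the abstract properties of $\wt A(\l)$, because the $D$-triplet has $\dim\cH_0\neq\dim\cH_1$ in general, so the symmetry $(\wt A(\l))^*=\wt A(\ov\l)$ does not reduce to a clean adjoint relation but requires the genuinely asymmetric $\tm$-adjoint from \cite{Mog06.1}; this is the only place where the block splitting of $C_0(\l)=(C_{01}(\l)\;\;C_{02}(\l))$ enters nontrivially, with the $C_{02}$-terms accounting for the extra $iP_2$ contribution in Green's identity \eqref{1.10}. Once this bookkeeping is accepted, each direction of the equivalence is a routine reinterpretation, and the bijectivity follows.
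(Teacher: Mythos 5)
The paper does not actually prove Theorem \ref{th1.4}; it is imported from \cite{Mog06.2} (and Remark \ref{rem1.5} notes that the equal-index version goes back to \cite{Bru76}, obtained from the Shtraus resolvent formula). So there is no internal proof to compare you against. That said, your route --- composing the Shtraus parametrization \eqref{0.1} with the $\l$-wise bijection between proper extensions and operator pairs supplied by the surjectivity of $\G$, and then translating the three analytic conditions on $\wt A(\l)$ into \eqref{1.7e}--\eqref{1.7g} --- is exactly the classical route the paper points to, and your dictionary is the right one, including the role of the $iP_2$ term of \eqref{1.10} and of the $C_{02}$ blocks, and the appearance of the $\tm$-adjoint in place of a plain adjoint when $\dim\cH_0\neq\dim\cH_1$. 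One bookkeeping remark: for $\l\in\bC_+$ the Shtraus extension is accumulative, not dissipative (a direct computation gives $Im (\wt A(\l)g,g)=Im\,\l\,(\|g\|^2-\|\hat g\|^2)\leq 0$ for $g=P_\gH\hat g$), so the pairing you want is ``accumulative $\leftrightarrow$ $-\tau_+(\l)\in Ac(\cH_0,\cH_1)$''; your sentence pairing ``dissipativity'' with $Ac$ is internally inconsistent, even if the objects you track are the correct ones.

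The one step I would call a genuine gap is the claim that $\l\in\rho(\wt A(\l))$ is ``automatic once $0\in\rho(C_0(\l)-iC_1(\l)P_1)$ holds, thanks to Lemma \ref{lem1.2} and identity \eqref{1.15.1}.'' Lemma \ref{lem1.2} merely identifies $(C_0(\l)\G_0-C_1(\l)\G_1)\g_+(\l)$ with $C_0(\l)-C_1(\l)M_+(\l)$; it proves nothing about invertibility, and the normalization $0\in\rho(C_0(\l)-iC_1(\l)P_1)$ is a property of the pair alone, decoupled from $M_+(\l)$. What Theorem \ref{th1.3} actually requires is $0\in\rho(C_0(\l)-C_1(\l)M_+(\l))$, equivalently $0\in\rho(\tau_+(\l)+M_+(\l))$, and this needs the analytic input of \eqref{1.15.0} with $\mu=\l$: the ``imaginary part'' of $M_+(\l)$ dominates $Im\,\l\cd\g_+^*(\l)\g_+(\l)$, which is boundedly invertible since $\g_+(\l)$ is an isomorphism onto $\gN_\l(A)$; only the combination of this uniform strictness with the maximal accumulativity of $-\tau_+(\l)$ yields the invertibility. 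The paper itself treats this as a separate result of \cite{Mog06.2}, invoked again without proof just before \eqref{3.17}. A second, smaller omission: applying the extension--pair bijection ``$\l$-wise'' produces an equivalence class of pairs for each fixed $\l$, but not yet a holomorphic pair in the sense of Definition \ref{def1.0.1}; one must still exhibit holomorphic representatives $C_j(\cd)$, e.g.\ by expressing them through $\G_0,\G_1$, the $\g$-fields and the (holomorphic) resolvent family. With these two points supplied, your argument is the standard and, as far as I can check against the paper's definitions, correct one.
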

\begin{remark}\label{rem1.5}
If a $D$-triplet $\Pi=\bta$ satisfies the relation $\cH_0=\cH_1:=\cH\;(\Leftrightarrow A_0=A_0^*)$, then it is a boundary triplet. More precisely this means that the collection $\Pi=\{\cH,\G_0,\G_1\}$ is a boundary triplet (boundary value space) for $A^*$ in the sense of \cite{GorGor}. In this  case the relations
\begin{equation}\label{1.19a}
\g(\l)=(\G_0\up\gN_\l(A))^{-1}, \qquad \G_1\up\gN_\l(A)=M(\l)\G_0 \up\gN_\l(A), \qquad \l\in\rho (A_0)
\end{equation}
define the operator function ($\g$-field) $\g(\cd):\rho (A_0)\to [\cH,\gH]$ and the Weyl function $M(\cd):\rho (A_0)\to [\cH]$ \cite{DM91} associated with operator functions  \eqref{1.13}--\eqref{1.15} via $\g(\l)=\g_\pm(\l)$ and $M(\l)=M_\pm(\l), \;\l\in\bC_\pm $.
%\begin{equation*}
%\g(\l)=\begin{cases}\g_+(\l), \;\;\l\in\bC_+ \cr  \g_-(\l),
%\;\;\l\in\bC_- \end{cases}, \qquad  M(\l)=\begin{cases}M_+(\l),
%\;\;\l\in\bC_+ \cr  M_-(\l), \;\;\l\in\bC_- \end{cases}
%\end{equation*}
Observe also that in the case $n_+(A)=n_-(A)$ formulas similar to \eqref{1.18},\eqref{1.19} were originally obtained in  \cite{Bru76} on the basis of Shtraus formula for resolvents \cite{Sht70} (see also \cite{DM91}).
\end{remark}
\section{Differential operators and decomposing boundary triplets}
\subsection{ Differential  operators }
Let $\D=[0,b\rangle\; (b\leq \infty)$ be an interval on the real axis (in the case $b<\infty$ the point $b$ may or may not belong to $\Delta$), let $H$ be a separable Hilbert space  with $\dim H\leq\infty$ and let
\begin{equation}\label{1.20}
l[y]=l_H[y]=\sum_{k=1}^n (-1)^k ( (p_{n-k}y^{(k)})^{(k)}-\tfrac {i}{2}
[(q_{n-k}^*y^{(k)})^{(k-1)}+(q_{n-k} y^{(k-1)})^{(k)}])+p_n y,
\end{equation}
be a differential expression of an even order $2n$ with smooth enough operator-valued coefficients $p_k(\cd), q_k(\cd):\D\to [H]$  such that $ p_k(t)=p_k^*(t)$ and $ 0\in\rho (p_0(t))$ for all $ t\in\D$ and $ k= 0\div n$.
Denote by $y^{[k]}(\cd), \; k=0\div 2n$ the quasi-derivatives of a vector-function $y(\cd):\D\to H$, corresponding to the expression \eqref{1.20}. Moreover for every operator function $Y(\cd):\D\to [\cK,H]$ ($\cK$ is a Hilbert space) introduce quasi-derivatives $Y^{[k]}(\cd)$  by the same formulas as $y^{[k]}$ (see \cite{Nai, Rof69}).

Let $\cD (l)$ be the set of all functions $y(\cd)$ such that  $y^{[k]}(\cd),\; k=0\div (2n-2)$ has a continuous derivative on $\D$ and $y^{[2n-1]}$ is absolutely continuous on $\D$. Furthermore for a given Hilbert space $\cK$ denote by $\cD_\cK (l)$ the set of all operator-functions $Y(\cd)$ with values in $[\cK,H]$ such that $Y^{[k]}(\cd), \;k=0\div (2n-1) $ has a continuous derivative on $\D$. Clearly  for every $y\in \cD (l)$ and $Y\in \cD_\cK (l)$ the functions $y^{[k]}(\cd):\D\to H,\; k=0\div (2n-1) $ and $Y^{[k]}(\cd):\D\to [\cK,H],\;k=0\div 2n $ are continuous on $\D$, the function $y^{[2n]}(t)(\in H)$ is defined almost everywhere  on $\D$ and
\begin{equation*}
l[y]=y^{[2n]}(t), \quad y\in\cD (l); \qquad  l[Y]=Y^{[2n]}(t), \quad Y\in\cD_\cK (l).
\end{equation*}
This makes it possible to introduce the vector functions $y^{(j)}(\cd):\D\to H^n, \; j\in\{1,2\}$ and $\wt y(\cd):\D\to H^n\oplus H^n$,
\begin{align}
y^{(1)}(t):=\{y^{[k-1]}(t)\}_{k=1}^n (\in H^n), \qquad y^{(2)}(t):=\{y^{[2n-k]}(t)\}_{k=1}^n (\in H^n),\label{1.21}\\
\wt y(t)=\{y^{(1)}(t),y^{(2)}(t)\} (\in H^n\oplus H^n), \qquad t\in\D,\qquad\qquad \label{1.22}
\end{align}
which correspond to every $y\in\cD (l)$. Similarly with each $Y\in \cD_\cK (l)$  we associate the operator-functions $Y^{(j)}(\cd):\D\to [\cK,H^n]$ and $\wt Y(\cd):\D\to [\cK,H^n\oplus H^n]$ given by
\begin{align*}
Y^{(1)}(t)=(Y(t)\;\;Y^{[1]}(t)\;\dots\; Y^{[n-1]}(t))^\top, \quad  Y^{(2)}(t)=(Y^{[2n-1]}(t)\;\;Y^{[2n-2]}(t)\;\dots \; Y^{[n]}(t))^\top,\\
\wt Y(t)=(Y^{(1)}(t)\;\;\;Y^{(2)}(t))^\top : \cK \to  H^n\oplus H^n,\qquad t\in\D.\qquad\qquad\qquad
\end{align*}

Next for a given $\l\in\bC$ consider the equation
\begin{equation}\label{1.23}
l[y]-\l y=0.
\end{equation}
As is known this equation has the unique vector solution $y\in\cD (l)$ (operator solution $Y\in \cD_\cK (l)$) with the given initial data $y_{j0}= y^{(j)}(0)$ (respectively, $ Y_{j0}= Y^{(j)}(0)$), $j\in\{1,2\}$. We distinguish the two "canonical" operator solutions $c (\cd,\l)$ and $s (\cd,\l):\D\to [H^n,H], \; \l\in\bC$ of the equation \eqref{1.23} with the initial data
\begin{equation}\label{1.24}
c^{(1)}(0,\l)=I_{H^n}, \quad c ^{(2)}(0,\l)=0, \quad s ^{(1)}(0,\l)=0,
\quad s ^{(2)}(0,\l)=I_{H^n}, \quad \l\in\bC.
\end{equation}
Clearly the equality
\begin{equation}\label{1.25}
Y_0(t,\l):= (c(t,\l)\;\; s(t,\l)): H^n\oplus H^n\to H, \quad \l\in\bC
\end{equation}
defines the operator solution of \eqref{1.23} with $\wt Y_0 (0,\l)=I_{H^n\oplus H^n}$. Moreover every operator solution $Y(\cd)\in \cD_\cK (l)$ of \eqref{1.23} is connected with $Y_0(\cd, \l)$ by
\begin{equation}\label{1.25a}
Y(t)=Y_0(t,\l)\wt Y(0).
\end{equation}

The following lemma is well known (see for instance \cite{Nai,Hol85}).
\begin{lemma}\label{lem1.6}
Let $f\in L_{1,loc}(\D; H)$ and let $Y(\cd):\D\to [\cK,H]$ be an operator solution of \eqref{1.23}. Next assume that an absolutely continuous   function $C(\cd):\D\to\cK$ satisfies
\begin{equation}\label{1.26}
\wt Y(x) C'(x)=\hat f(x) \quad (\text
{mod} \;\; \mu),
\end{equation}
where the vector function $\hat f(\cd):\D\to H^n\oplus H^n$ is given by $\hat f(x)=\{\underbrace{0,\dots, 0,}_n  -f(x),\underbrace {0,\dots, 0}_{n-1}\}$ and $\mu$ is the Lebesgue measure on $\D$. Then the vector function
y(x):=Y(x)C(x) belongs to $\cD (l)$ and obeys the relations
\begin{equation}\label{1.27}
l[y]-\l y=f, \qquad \wt y (x)=\wt Y(x) C(x).
\end{equation}
\end{lemma}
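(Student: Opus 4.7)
The plan is to carry out the standard variation-of-parameters computation, namely verify by induction that the quasi-derivatives of $y(x):=Y(x)C(x)$ agree with $Y^{[k]}(x)C(x)$ for $k=0,1,\dots,2n-1$, and then show that the one surviving $C'(x)$-contribution produces exactly $f(x)$ on the right-hand side of $l[y]-\lambda y$.

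First I would unpack the hypothesis \eqref{1.26} componentwise. Using the definitions of $Y^{(1)}(x)$, $Y^{(2)}(x)$, and $\wt Y(x)$, together with the explicit form $\hat f(x)=\{\underbrace{0,\dots,0}_n,-f(x),\underbrace{0,\dots,0}_{n-1}\}$, the single relation $\wt Y(x)C'(x)=\hat f(x)$ is equivalent, almost everywhere on $\Delta$, to
\begin{equation*}
Y^{[k]}(x)C'(x)=0\quad\text{for every }k\in\{0,1,\dots,2n-2\},\qquad Y^{[2n-1]}(x)C'(x)=-f(x).
\end{equation*}
This is precisely the classical system of constraints on $C'(x)$ produced by the variation-of-parameters method.

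Next I would prove by induction on $k$ that $y^{[k]}(x)=Y^{[k]}(x)C(x)$ for $k=0,1,\dots,2n-1$. The base case $k=0$ is immediate. For the induction step, each quasi-derivative $y^{[k+1]}$ is built from $y^{[k]}$ by a single differentiation combined with a pointwise-linear combination of $y^{[j]}$, $j\le k$, involving only the coefficients $p_\cdot,q_\cdot$ of \eqref{1.20}; the same recursive relations are satisfied when $y$ is replaced by the operator solution $Y$. Differentiating the identity $y^{[k]}=Y^{[k]}C$ (legitimate by absolute continuity of $C$ and continuous differentiability of $Y^{[k]}$) yields $(Y^{[k]})'C+Y^{[k]}C'$; the spurious term $Y^{[k]}C'$ vanishes by Step 1 for every $k\le 2n-2$, while $(Y^{[k]})'$ combines with the lower-order terms exactly as $Y^{[k+1]}$ combines them, closing the induction. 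In particular $y^{[2n-1]}=Y^{[2n-1]}C$ is absolutely continuous, so $y\in\cD(l)$; and the identity $\wt y(x)=\wt Y(x)C(x)$ is then just a rewriting of \eqref{1.21}--\eqref{1.22}.

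Finally, to compute $l[y]-\lambda y=y^{[2n]}-\lambda y$, I would apply the last recursive step once more, using that $y^{[2n]}$ is obtained from $(y^{[2n-1]})'$ by adding a combination of $y^{[j]}$ with $j<2n-1$. Differentiating $y^{[2n-1]}=Y^{[2n-1]}C$ produces $(Y^{[2n-1]})'C+Y^{[2n-1]}C'$; the first summand, together with the algebraic contributions, reassembles into $(l[Y])C=\lambda YC=\lambda y$ since $Y$ solves \eqref{1.23}, while the second summand, owing to the sign with which $(y^{[2n-1]})'$ enters $y^{[2n]}$ (as one checks already in the model case $l[y]=-y''+qy$: $y^{[2]}=-(y^{[1]})'+qy$), contributes $-Y^{[2n-1]}C'=f(x)$ to $l[y]-\lambda y$. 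The only real obstacle is notational bookkeeping, namely tracking the signs and coefficients of the Naimark quasi-derivatives through the induction; no genuine analytic difficulty appears beyond the absolute continuity of $C$ and the smoothness assumptions already built into $\cD_\cK(l)$.
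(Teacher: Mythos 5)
The paper does not prove this lemma at all: it states that the result is well known and refers to Naimark and Khol'kin, where exactly the variation-of-parameters argument you describe is carried out. Your proof is correct and is essentially that classical argument: the componentwise unpacking of \eqref{1.26} into $Y^{[k]}C'=0$ for $k\le 2n-2$ and $Y^{[2n-1]}C'=-f$ is right given the ordering of components in $Y^{(1)}$ and $Y^{(2)}$, the induction $y^{[k]}=Y^{[k]}C$ closes as you say, and the final step correctly turns $Y^{[2n-1]}C'=-f$ into the inhomogeneity $f$ via the sign with which $(y^{[2n-1]})'$ enters $y^{[2n]}$. The only point worth making explicit is the membership $y\in\cD(l)$: for $k\le 2n-2$ the product $y^{[k]}=Y^{[k]}C$ is a priori only absolutely continuous, but since its a.e.\ derivative equals the continuous function $(Y^{[k]})'C$, it is in fact continuously differentiable, as the definition of $\cD(l)$ requires.
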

In what follows we  denote by $\gH(=L_2(\D; H))$ the Hilbert space of all measurable  functions $f(\cd):\D\to H$ such that $\int _0^b ||f(t)||^2\,dt<\infty$. Moreover  $\LH{\cK}$ stands for the set of all operator-functions $Y(\cd):\D\to [\cK, H]$ such that $Y(t)h\in \gH$
for all $h\in\cK$.

It  is known \cite{Nai,Rof69} that the expression \eqref{1.20} generate the maximal operator $L$ in $\gH$, defined on the domain
$\cD=\cD (L):=\{y\in\cD (l)\cap \gH: l[y]\in\gH\}$
by the equality $Ly=l[y], \; y\in\cD$. Moreover  the Lagrange's identity
\begin{equation}\label{1.28}
(Ly,z)_\gH -(y,Lz)_\gH=[y,z](b)-[y,z](0),\qquad y,z\in\cD
\end{equation}
holds with
\begin{equation*}
[y,z](t)=(y^{(1)}(t),z^{(2)}(t))_{H^n}-(y^{(2)}(t),z^{(1)}(t))_{H^n}, \quad [y,z](b)=\lim_{t\uparrow b} [y,z](t).
\end{equation*}
The minimal operator $L_0$  is defined as a restriction of $L$ onto the domain $D_0=\cD(L_0)$ of all functions $y\in\cD$ such that $\wt y(0)=0$ and $[y,z](b)=0$ for all $z\in\cD$. As is known \cite {Nai,Rof69}  $L_0$ is a closed densely defined symmetric operator  in $\gH$ and $L_0^*=L$. Moreover the  subspace $\gN_\l(L_0)\,(=\Ker (L-\l))$ is the set of all solutions of \eqref{1.23} belonging to $\gH$.

Let $\t=\t^*\in\C (H^n)$  and let $L_\t$ be a  symmetric extension of $L_0$ with the domain
\begin{equation}\label{1.29}
\cD(L_\t)=\{y\in\cD:\wt y(0)\in\t\;\;\text{and}\;\;[y,z](b)=0,\;z\in\cD\}.
\end{equation}
As was shown in  \cite{Mog} deficiency indices $n_\pm(L_\t)$ of an operator $L_\t$ do not depend on $\t(=\t^*)$. This makes it possible to introduce the following definition.
\begin{definition}\label{def1.6a}\cite{Mog}
The numbers $n_{b\pm}:=n_\pm(L_\t)\;(\t=\t^*\in\C (H^n))$ are called deficiency indices of the differential  expression $l[y]$ at the right end $b$  of the interval $\D$.
\end{definition}
\subsection{Decomposing boundary triplets}
Assume that $\cH'_1$ is a subspace in a Hilbert space $\cH'_0$, $\cH_2':=\cH'_0\ominus\cH_1'$,  $\G_0':\cD\to \cH'_0$ and $\G_1':\cD\to \cH'_1$ are linear maps and  $P'_j$ is the orthoprojector in $\cH'_0$ onto $\cH'_j, \; j\in\{1,2\}$. Moreover let $\cH_0=H^n\oplus \cH'_0, \;\cH_1=H^n\oplus \cH'_1$ and let $\G_j:\cD\to \cH_j,\;j\in\{0,1\} $ be linear maps given by
\begin{equation}\label{1.30}
\G_0 y=\{y^{(2)}(0),  \G'_0y\}\,(\in H^n\oplus \cH'_0), \;\;\; \G_1 y=\{-y^{(1)}(0),  \G'_1y\}\,(\in H^n\oplus \cH'_1), \quad y\in\cD.
\end{equation}
\begin{definition}\label{def1.9b}\cite{Mog}
A collection $\Pi=\bta$, where $\G_0$ and $\G_1$ are linear maps \eqref{1.30}, is said to be a decomposing $D$-triplet for $L$ if the map $\G'=(\G'_0\;\;\G'_1)^\top:\cD\to\cH_0'\oplus\cH'_1$ is surjective and the following identity holds
\begin{equation}\label{1.31}
[y,z](b)=(\G'_1 y,\G'_0 z)- (\G'_0 y,\G'_1 z)+i(P'_2\G'_0 y,P'_2\G'_0 z),
\quad y,z \in \cD.
\end{equation}
\end{definition}
In the case $\cH'_0=\cH'_1=:\cH'\;(\iff \cH_0=\cH_1=:\cH)$ a decomposing $D$-triplet $\Pi=\bt$ is called a decomposing boundary triplet for $L$. For such a triplet the identity \eqref{1.31} takes the form
\begin{equation}\label{1.32}
[y,z](b)=(\G'_1 y,\G'_0 z)- (\G'_0 y,\G'_1 z), \quad y,z \in \cD.
\end{equation}
As was shown in \cite{Mog}, Lemma 3.4 a decomposing $D$-triplet (a decomposing boundary triplet) for $L$ is a $D$-triplet (a boundary triplet) in the sense of Definition \ref{def1.1} and Remark \ref{rem1.5}. Moreover a decomposing $D$-triplet (boundary triplet) for $L$  exists if and only if $n_{b-}\leq n_{b+}$ (respectively, $n_{b-}= n_{b+}$), in which case
\begin{equation}\label{1.32.0}
\dim\cH_1'=n_{b-}\leq n_{b+}=\dim\cH_0'\quad (\text{respectively,}\;\; n_{b-}=n_{b+}=\dim\cH').
\end{equation}
Therefore in the sequel  we suppose (without loss of generality)   that $n_{b-}\leq n_{b+}$ and, consequently, $n_-(L_0)\leq n_+(L_0)$.
\begin{theorem}\label{th1.10}\cite{Mog}
Let $\Pi=\bta$ be a decomposing $D$-triplet \eqref{1.30}   and let
\begin{align}
M_+(\l)=\begin{pmatrix} m(\l) & M_{2+}(\l)\cr M_{3+}(\l) & M_{4+}(\l)\end{pmatrix}:
H^n\oplus\cH'_0 \to H^n\oplus \cH'_1, \quad \l\in\bC_+,\label{1.32a}\\
M_-(z)=\begin{pmatrix} m(z) & M_{2-}(z)\cr M_{3-}(z) & M_{4-}(z)\end{pmatrix}:
H^n\oplus\cH'_1 \to H^n\oplus \cH'_0, \quad z\in\bC_-,\label{1.32b}
\end{align}
be the block-matrix representations of the  Weyl functions \eqref{1.14}, \eqref{1.15} for $\Pi$. Then:

1) the maximal symmetric extension $A_0\in\exl$ (see \eqref{1.11}) has the domain
\begin{equation}\label{1.33}
\cD (A_0)=\{y\in\cD : y^{(2)}(0)=0,\; \G'_0 y=0\};
\end{equation}
2) for every $\l\in\bC_+\cup\bC_-$ there exists the unique operator function $v_0(\cd,\l)\in\LH{H^n}$, satisfying the equation  \eqref{1.23}and the boundary conditions
\begin{align}
 v_0^{(2)}(0,\l)=I_{H^n}, \quad \l\in\bC_+\cup\bC_-\qquad\qquad\qquad \label{1.34}\\
\G'_0(v_0(t,\l)\hat h)=0,\;\;\;\l\in\bC_+ ;\qquad P'_1\G'_0(v_0(t,z)\hat h)=0,\;\;\;z\in\bC_-, \;\;\;\; \hat h\in H^n.\label{1.35}
\end{align}
Moreover for every $\l\in\bC_+\;(z\in\bC_-)$ there exists the  unique operator function $u_+(\cd,\l)\in\LH{\cH'_0}\;(u_-(\cd,z)\in\LH{\cH'_1})$, satisfying   \eqref{1.23} and the boundary conditions
\begin{align}
 u_+^{(2)}(0,\l)=0, \quad \G'_0 (u_+(t,\l) h'_0)=h_0',\quad \l\in\bC_+, \;\;\;h'_0\in\cH'_0;\;\;\;\; \label{1.36}\\
 u_-^{(2)}(0,z)=0, \quad P_1'\G'_0(u_-(t,z) h'_1)=h_1',\quad z\in\bC_-,\;\;\; h'_1\in\cH'_1. \label{1.37}
\end{align}

3) let $Z_+(\cd,\l)\in\LH{\cH_0}$ and $Z_-(\cd,z)\in\LH{\cH_1}$ be operator solutions of \eqref{1.23} defined by the block-matrix representations
\begin{align}
Z_+(t,\l)=(v_0(t,\l)\;\;u_+(t,\l)):H^n\oplus\cH'_0\to H, \quad \l \in \bC_+, \label{1.38}\\
Z_-(t,z)=(v_0(t,z)\;\;u_-(t,z)):H^n\oplus\cH'_1\to  H, \quad z\in\bC_-\label{1.39}.
\end{align}
Then
\begin{align}
\wt Z_+(0,\l)=\begin{pmatrix}v_0^{(1)}(0,\l) &  u_+^{(1)}(0,\l) \cr v_0^{(2)}(0,\l) &  u_+^{(2)}(0,\l)\end{pmatrix}=
\begin{pmatrix}-m(\l) & -M_{2+}(\l)\cr I_{H^n} & 0\end{pmatrix}, \;\;\l\in\bC_+\label{1.37a}\\
\wt Z_-(0,z)=\begin{pmatrix}v_0^{(1)}(0,z) &  u_-^{(1)}(0,z) \cr v_0^{(2)}(0,z) &  u_-^{(2)}(0,z)\end{pmatrix}=\begin{pmatrix}-m(z) & -M_{2-}(z)\cr I_{H^n} & 0\end{pmatrix}, \;\; z\in\bC_-.\label{1.37b}
\end{align}
and the $\g$-fields \eqref{1.13} for $\Pi$ obey the relations
\begin{align}
(\g_+(\l) h_0)(t)=Z_+(t,\l) h_0, \quad \l\in\bC_+,\;\;  h_0\in \cH_0 \label{1.40}\\
(\g_-(z) h_1)(t)=Z_-(t,z) h_1, \quad z\in\bC_-,\;\;  h_1\in \cH_1 \label{1.41}
\end{align}
This implies that $Z_+(\cd,\l)$ and $Z_-(\cd,\l)$ are fundamental solutions of \eqref{1.23} with holomorphic quasi-derivatives $Z_+^{[k]}(t,\cd)$ and $Z_-^{[k]}(t,\cd), \; k=0\div (2n-1)$ (see Definition \ref{def1.7}).

4) the block-matrix representations \eqref{1.32a} and \eqref{1.32b} generate the uniformly strict Nevanlinna  operator-function $m(\cd):\bC_+\cup\bC_-\to [H^n]$ ( this means that $Im \l\cd Im\; (m(\l))\geq 0, \; 0\in\rho (Im\; (m(\l)))$
and $m^*(\ov\l)=m(\l), \; \l\in\bC_+\cup\bC_-$).
\end{theorem}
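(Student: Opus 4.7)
My strategy is to derive each of the four assertions from the abstract $D$-triplet machinery (Definition~\ref{def1.1}, Lemma~\ref{lem1.2}, and the identity \eqref{1.15.0}) together with the representation \eqref{1.25a}, which says that an operator solution of $l[y]-\l y=0$ is uniquely determined by its quasi-derivatives at $0$. Assertion 1) is immediate: \eqref{1.11} gives $\cD(A_0)=\Ker\G_0$, and the definition \eqref{1.30} of $\G_0$ identifies this kernel with the set of $y\in\cD$ for which $y^{(2)}(0)=0$ and $\G_0'y=0$. For assertion 2), I would \emph{define}
\begin{equation*}
v_0(t,\l)\hat h := (\g_+(\l)\{\hat h,0\})(t),\qquad u_+(t,\l)h_0':=(\g_+(\l)\{0,h_0'\})(t),\quad \l\in\bC_+,
\end{equation*}
and proceed analogously on $\bC_-$ with $\g_-(z)$ applied to elements of $\cH_1=H^n\oplus\cH_1'$. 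Since $\g_\pm$ takes values in $\gN_\l(L_0)\subset\gH$, these are operator solutions of \eqref{1.23} of the stated integrability class; the initial and boundary conditions \eqref{1.34}--\eqref{1.37} are then read off directly from the $\g$-field identities \eqref{1.15.3}, \eqref{1.15.4}, and uniqueness is automatic because any operator function satisfying the stated conditions is a preimage under the bijections $\G_0\up\gN_\l(L_0)$, $P_1\G_0\up\gN_z(L_0)$.

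For assertion 3), the equalities \eqref{1.40}, \eqref{1.41} merely record the block form of the above definitions of $v_0$ and $u_\pm$. To derive \eqref{1.37a}, \eqref{1.37b}, I would substitute the representations \eqref{1.32a}, \eqref{1.32b} into the Weyl-function relations $\G_1\g_+(\l)=M_+(\l)$ and $\G_1\g_-(z)=P_1M_-(z)$ from \eqref{1.15.3}, \eqref{1.15.4}; the first ($H^n$-)block then yields $-y^{(1)}(0)=m(\cdot)\hat h+M_{2\pm}(\cdot)h'$ for $y=\g_\pm(\cdot)\{\hat h,h'\}$, so setting $h'=0$ or $\hat h=0$ produces the claimed values of $v_0^{(1)}(0,\cdot)$ and $u_\pm^{(1)}(0,\cdot)$, while the second rows are already recorded in \eqref{1.34}, \eqref{1.36}, \eqref{1.37}. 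Holomorphy of the quasi-derivatives $Z_\pm^{[k]}(t,\cdot)$ then follows from \eqref{1.25a}: $Z_\pm(t,\l)=Y_0(t,\l)\wt Z_\pm(0,\l)$ with $Y_0$ the canonical fundamental solution (whose quasi-derivatives are entire in $\l$), and $\wt Z_\pm(0,\l)$ is holomorphic by the already established holomorphy of $m$ and $M_{2\pm}$ on their half-planes.

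Assertion 4) is the main technical point. Compressing the identity \eqref{1.15.0} (with $\mu=\l\in\bC_+$) to $H^n$ kills the $P_1$ and $P_2$ contributions, since $H^n\subset\cH_1$ is orthogonal to $\cH_2=\cH_2'$ and $P_1$ acts as the identity on it; this produces
\begin{equation*}
m(\l)-m^*(\l)=2i\,Im\,\l\cd(\g_+(\l)\up H^n)^*(\g_+(\l)\up H^n),\qquad \l\in\bC_+,
\end{equation*}
along with $m^*(\ov\l)=m(\l)$ inherited from $M_+^*(\l)=M_-(\ov\l)$. Nonnegativity of $(Im\,\l)^{-1}Im\,m(\l)$ is then clear, so the main obstacle is the uniform strictness $0\in\rho(Im\,m(\l))$. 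The plan is to invoke the open mapping theorem: on $\gN_\l(L_0)$ the graph norm of $L$ is equivalent to the $\gH$-norm (because $Lf=\l f$ there), so $\g_+(\l):\cH_0\to\gN_\l(L_0)$ is a topological isomorphism and, in particular, is bounded below on the closed subspace $H^n\oplus\{0\}\subset\cH_0$. Hence $(\g_+(\l)\up H^n)^*(\g_+(\l)\up H^n)\in[H^n]$ is boundedly invertible, and so is $Im\,m(\l)$; the case $\l\in\bC_-$ follows either by the analogous compression involving $\g_-$ and $M_-$ or directly from $m^*(\ov\l)=m(\l)$.
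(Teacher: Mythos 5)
Your proposal is correct, but note that the paper itself offers no proof of Theorem \ref{th1.10}: it is imported verbatim from \cite{Mog}, so there is no internal argument to compare against. What you have done is reconstruct the proof from the abstract $D$-triplet machinery that the paper does make available, and the reconstruction holds together. Defining $v_0$, $u_\pm$ as the blocks of $\g_\pm$ via Lemma \ref{lem1.8}, 1) immediately yields existence, the integrability class, and (by injectivity of $\G_0\up\gN_\l(L_0)$ and $P_1\G_0\up\gN_z(L_0)$) uniqueness; reading off \eqref{1.37a}--\eqref{1.37b} from \eqref{1.15.3}--\eqref{1.15.4} is exactly right, and the factorization $Z_\pm(t,\l)=Y_0(t,\l)\wt Z_\pm(0,\l)$ correctly reduces holomorphy of the quasi-derivatives to holomorphy of $m$ and $M_{2\pm}$. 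Two small points you leave implicit: first, the assertion that $Z_\pm(\cd,\l)$ are \emph{fundamental} solutions (not merely operator solutions) requires the remark that $\g_\pm(\l)$ are isomorphisms onto $\gN_\l(L_0)$ together with Lemma \ref{lem1.8}, 2); second, in part 4) your phrase that compression to $H^n$ ``kills the $P_1$ contribution'' is loose --- $P_1$ acts as the identity on $H^n\subset\cH_1$, so the term $M_+^*(\l)P_1$ survives as $m^*(\l)$, which is precisely what your displayed identity records, so the conclusion is unaffected. The lower bound for $\g_+(\l)\up H^n$ via the open mapping theorem, giving $0\in\rho(Im\,m(\l))$, is the standard argument and is valid here because $\G_0\in[\cD_+,\cH_0]$ and the graph norm is equivalent to the $\gH$-norm on $\gN_\l(L_0)$. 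In short: a sound, self-contained derivation of a result the paper only quotes.
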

\begin{definition}\label{def1.10a}\cite{Mog}
The operator function $m(\cd)$ defined in the statement 4) of Theorem \ref{th1.10} is called an $m$-function of the operator $L_0$, corresponding to the extension $A_0$.
\end{definition}
In the following corollary the statements of Theorem \ref{th1.10} are reformulated for the case of a decomposing boundary triplet.
\begin{corollary}\label{cor1.11}
Let $\Pi=\bt$ be a decomposing boundary triplet \eqref{1.30} for $L$ (that is $\cH_0'=\cH_1'=:\cH'$), let $A_0(=L\up\Ker\G_0)$ be the extension \eqref{1.11}  and let
\begin{align}\label{1.42}
M(\l)=\begin{pmatrix} m(\l) & M_{2}(\l)\cr M_{3}(\l) & M_{4}(\l)\end{pmatrix}:
H^n\oplus\cH' \to H^n\oplus \cH', \quad \l\in\rho (A_0)
\end{align}
be the block-matrix representation of the  corresponding Weyl function \eqref{1.19a}. Then:

1) $A_0$ is a selfadjoint extension with the domain \eqref{1.33};

2) for every $\l\in\rho (A_0)$ there exist the unique pair of operator functions $v_0(\cd,\l)\in\LH{H^n}$ and $u_0(\cd,\l)\in\LH{\cH'}$
satisfying \eqref{1.23}and the boundary conditions
\begin{align*}
v_0^{(2)}(0,\l)=I_{H^n},\quad \G'_0(v_0(t,\l)\hat h)=0,\;\;\;\hat h\in H^n, \;\;\l\in\rho (A_0)\\
u_0^{(2)}(0,\l)=0, \quad  \G'_0 (u_0(t,\l) h')=h', \;\;\; h'\in\cH',\;\;\l\in\rho (A_0)
\end{align*}

3) let $Z_0(\cd,\l)\in\LH{\cH}$  be an operator solution of \eqref{1.23} defined by
\begin{align}\label{1.43}
Z_0(t,\l)=(v_0(t,\l)\;\;u_0(t,\l)):H^n\oplus\cH'\to H, \quad \l \in \rho (A_0)
\end{align}
Then
\begin{align*}
\wt Z_0(0,\l)=\begin{pmatrix}v_0^{(1)}(0,\l) &  u_0^{(1)}(0,\l) \cr v_0^{(2)}(0,\l) &  u_0^{(2)}(0,\l)\end{pmatrix}=
\begin{pmatrix}-m(\l) & -M_{2}(\l)\cr I_{H^n} & 0\end{pmatrix}, \;\;\l\in\rho (A_0)
\end{align*}
and the corresponding $\g$-field \eqref{1.19a}  obey $(\g(\l)h)(t)=Z_0(t,\l)h,\;h\in\cH,\;\l\in\rho (A_0)$.
\end{corollary}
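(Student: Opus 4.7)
The plan is to derive Corollary \ref{cor1.11} as a direct specialization of Theorem \ref{th1.10} to the symmetric case $\cH_0' = \cH_1' =: \cH'$, with only one nontrivial adjustment: extending everything from $\bC_+\cup\bC_-$ to the whole resolvent set $\rho (A_0)$.

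For part 1) I would appeal to Remark \ref{rem1.5}: the hypothesis $\cH_0'=\cH_1'$ gives $\cH_0=\cH_1=\cH$, so the decomposing $D$-triplet is a decomposing boundary triplet and therefore $A_0=A_0^*$ is selfadjoint. The explicit description of $\cD(A_0)$ is then an immediate restatement of \eqref{1.33} from Theorem \ref{th1.10}.

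For part 2) I would exploit that $\cH_2' = \cH_0'\ominus\cH_1' = \{0\}$, hence $P_1'=I_{\cH'}$ and $P_2'=0$. Under this reduction, both halves of \eqref{1.35} collapse to $\G_0'(v_0(t,\l)\hat h)=0$, and conditions \eqref{1.36}, \eqref{1.37} become the single condition $\G_0'(u(t,\l)h')=h'$. So on $\bC_+$ I would set $u_0(\cdot,\l):=u_+(\cdot,\l)$ and on $\bC_-$ set $u_0(\cdot,\l):=u_-(\cdot,\l)$, inheriting existence and uniqueness from Theorem \ref{th1.10}. For part 3) on $\bC_+\cup\bC_-$, the matrix formula for $\wt Z_0(0,\l)$ then reads off directly from \eqref{1.37a}, \eqref{1.37b}, while the $\gamma$-field identity $(\gamma(\l)h)(t)=Z_0(t,\l)h$ follows from \eqref{1.40}, \eqref{1.41}, using the identification $\gamma(\l)=\gamma_\pm(\l)$ and $M(\l)=M_\pm(\l)$ on $\bC_\pm$ noted in Remark \ref{rem1.5}.

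The main obstacle, and the only step that goes beyond pure bookkeeping, is extending parts 2) and 3) from $\bC_+\cup\bC_-$ to the full resolvent set $\rho(A_0)$ (which, since $A_0=A_0^*$, can contain real points). I would handle this by holomorphy: the Weyl function $M(\cdot)$ of a boundary triplet, as defined in \eqref{1.19a}, extends holomorphically to all of $\rho(A_0)$, and so does the $\gamma$-field $\gamma(\cdot)$. Then I would simply \emph{define} $Z_0(t,\l)$, for each $\l\in\rho(A_0)$, as the unique operator solution of \eqref{1.23} with initial quasi-derivatives
\begin{equation*}
\wt Z_0(0,\l)=\begin{pmatrix}-m(\l) & -M_2(\l)\cr I_{H^n} & 0\end{pmatrix},
\end{equation*}
and set $v_0(\cdot,\l),\,u_0(\cdot,\l)$ to be its two block-columns. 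By the holomorphic dependence of solutions of \eqref{1.23} on parameters, $Z_0(\cdot,\l)$ has holomorphic quasi-derivatives on $\rho(A_0)$; on $\bC_\pm$ it coincides with the construction from Theorem \ref{th1.10} by uniqueness of the Cauchy problem. The identities $(\gamma(\l)h)(t)=Z_0(t,\l)h$ and $\G_0'(v_0(t,\l)\hat h)=0$, $\G_0'(u_0(t,\l)h')=h'$ then hold on $\bC_\pm$ and extend to $\rho(A_0)$ by the identity principle applied to the jointly holomorphic $\l$-dependence of both sides (through $\gamma(\l)$ and through the linear functional $\G_0'$ applied to solutions with holomorphic initial data). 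Finally, the membership $v_0(\cdot,\l)\in\LH{H^n}$ and $u_0(\cdot,\l)\in\LH{\cH'}$ for $\l\in\rho(A_0)$ follows from the $\gamma$-field relation together with $\gamma(\l)\in[\cH,\gH]$.
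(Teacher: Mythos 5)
Your proposal is correct and matches the paper's (implicit) argument: the corollary is obtained precisely by specializing Theorem \ref{th1.10} to the case $\cH_0'=\cH_1'$, which is all the paper itself offers by way of proof. Your extra step extending parts 2) and 3) to the real points of $\rho(A_0)$ is sound, though it can be done without any identity principle: the relations $\G_0\g(\l)=I_\cH$ and $\G_1\g(\l)=M(\l)$ hold by the definition \eqref{1.19a} for every $\l\in\rho(A_0)$, and reading off their $H^n$-components via \eqref{1.30} gives $\wt{(\g(\l)h)}(0)=\wt Z_0(0,\l)h$ together with the boundary conditions directly, after which uniqueness of the Cauchy problem for \eqref{1.23} yields $(\g(\l)h)(t)=Z_0(t,\l)h$ and hence the memberships $v_0(\cd,\l)\in\LH{H^n}$, $u_0(\cd,\l)\in\LH{\cH'}$.
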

\section{Fundamental solutions and resolvents of differential operators}
\subsection{Fundamental solutions and spectra of proper extensions}
Let $\Pi=\bta$ be a decomposing $D$-triplet  \eqref{1.30} for $L$, let $\t=\op$ $\in\CA$ be an   operator pair and let $\t^\times =\{(C_{0\times},C_{1\times});\cK_\times\}\in\CA$ be a $\times$-adjoint operator pair.
Moreover assume that
\begin{align}
C_0=(\hat C_2\;\;C_0'):H^n\oplus\cH'_0\to \cK,\quad  C_1=(-\hat C_1\;\;C_1'):H^n\oplus\cH'_1\to \cK,\qquad\quad\label{2.3}\\
C_{0\times}=(\hat C_{2\times}\;\;C_{0\times}'):H^n\oplus\cH'_0\to \cK_\times,\quad  C_{1\times}=(-\hat C_{1\times}\;\;C_{1\times}'):H^n\oplus\cH'_1\to \cK_\times \label{2.4}
\end{align}
are the block-matrix representations of the operators $C_j$ and $C_{j\times}, \; j\in\{0,1\}$.

In the next theorem the set of all proper extensions of the minimal operator $L_0$ is described in terms of boundary conditions.
\begin{theorem}\label{th2.1}\cite{Mog}
Let $\Pi=\bta$ be a decomposing $D$-triplet \eqref{1.30} for $L$. Then the equalities (the boundary conditions)
\begin{equation}\label{2.5}
\cD (\wt A)=\{y\in\cD: \,\hat C_1 y^{(1)}(0)+ \hat C_2 y^{(2)}(0)+C'_0\G'_0 y+C'_1\G'_1 y=0\}, \quad \wt A=L\up \cD (\wt A)
\end{equation}
establish a bijective correspondence between all proper extensions $\wt A\in\exl$ and all operator pairs $\t=\op\in\CA$ defined by  \eqref{2.3}. Moreover the adjoint $\wt A^*$ to the extension \eqref{2.5} has the domain
\begin{equation}\label{2.6}
\cD (\wt A^*)=\{y\in\cD: \,\hat C_{1\times} y^{(1)}(0)+ \hat C_{2\times} y^{(2)}(0)+C'_{0\times}\G'_0 y+C'_{1\times}\G'_1 y=0\}
\end{equation}
where the operators $\hat C_{1\times}, \; \hat C_{2\times}, \; C'_{0\times}\ $ and  $C'_{1\times}$ are defined by \eqref{2.4}
\end{theorem}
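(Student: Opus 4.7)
The plan is to reduce Theorem \ref{th2.1} to the abstract description of proper extensions via a $D$-triplet and then simply unwrap the block-matrix form of the boundary operators. Concretely, the excerpt already tells us (paragraph after Definition \ref{def1.9b}) that a decomposing $D$-triplet $\Pi=\bta$ is itself a $D$-triplet for $L=L_0^*$ in the sense of Definition \ref{def1.1}. Hence the general result from \cite{Mog06.2} (the same result underlying Theorem \ref{th1.3}) applies: the assignment
\begin{equation*}
\t=\op\in\CA \;\longmapsto\; \wt A=L\up\{y\in\cD: C_0\G_0 y+C_1\G_1 y=0\}
\end{equation*}
is a bijection between $\CA$ and $\exl$. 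So the first step is just to quote this bijection.

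Next I would plug in the decomposing structure. By \eqref{1.30},
\begin{equation*}
\G_0 y=\{y^{(2)}(0),\G'_0y\}\in H^n\oplus\cH'_0,\qquad \G_1 y=\{-y^{(1)}(0),\G'_1y\}\in H^n\oplus\cH'_1,
\end{equation*}
and by the block decomposition \eqref{2.3} of $C_0$ and $C_1$, a direct computation gives
\begin{equation*}
C_0\G_0 y+C_1\G_1 y=\hat C_2 y^{(2)}(0)+C'_0\G'_0 y-\hat C_1 y^{(1)}(0)+C'_1\G'_1 y,
\end{equation*}
which is exactly the left-hand side of the boundary condition in \eqref{2.5}. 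This proves that \eqref{2.5} is the bijective parametrization claimed.

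For the adjoint part, I would use the general fact from \cite{Mog06.1,Mog06.2} that for a $D$-triplet the adjoint of the extension associated with an operator pair $\t$ is the extension associated with the $\times$-adjoint pair $\t^\times$; equivalently, if $\wt A$ is defined by $C_0\G_0 y+C_1\G_1 y=0$, then $\wt A^*$ is defined by $C_{0\tm}\G_0 y+C_{1\tm}\G_1 y=0$. Substituting the block decomposition \eqref{2.4} and repeating the same unpacking as above yields precisely \eqref{2.6}. Conceptually the only nontrivial input is Green's identity \eqref{1.10} of the $D$-triplet, which is built into Definition \ref{def1.1} and has already been verified for decomposing $D$-triplets by Lemma~3.4 of \cite{Mog}.

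The main obstacle is not the calculation itself (it is pure bookkeeping once the abstract bijection and the definition of the $\times$-adjoint are accepted) but rather ensuring the sign conventions in \eqref{1.30} and \eqref{2.3}--\eqref{2.4}: the presence of $-y^{(1)}(0)$ in $\G_1 y$ together with the sign conventions $C_1=(-\hat C_1\;\;C_1')$ and $C_{1\tm}=(-\hat C_{1\tm}\;\;C_{1\tm}')$ is chosen precisely so that the products $C_1\G_1 y$ and $C_{1\tm}\G_1 y$ reproduce the terms $\hat C_1 y^{(1)}(0)$ and $\hat C_{1\tm}y^{(1)}(0)$ with the correct signs in \eqref{2.5} and \eqref{2.6}. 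Once those conventions are tracked carefully, the theorem follows immediately from the $D$-triplet formalism.
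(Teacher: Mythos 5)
Your proposal is correct and follows essentially the same route the paper takes: Theorem \ref{th2.1} is quoted from \cite{Mog} without an independent proof, and Remark \ref{rem2.2} records exactly the identification $C_0\G_0 y+C_1\G_1 y=0$ on which your argument rests, with the first part reducing to the abstract bijection for $D$-triplets from \cite{Mog06.2} and the adjoint part to the $\times$-adjoint correspondence from \cite{Mog06.1,Mog06.2}. One small slip: in your displayed computation the term should read $+\hat C_1 y^{(1)}(0)$ rather than $-\hat C_1 y^{(1)}(0)$, since the two minus signs coming from $\G_1 y=\{-y^{(1)}(0),\G_1'y\}$ and $C_1=(-\hat C_1\;\;C_1')$ cancel --- which is precisely the sign bookkeeping you describe correctly in your final paragraph, so only the displayed formula needs fixing.
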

\begin{remark}\label{rem2.2}
It is clear that the boundary conditions \eqref{2.5} can be written as
\begin{equation}\label{2.8a}
\cD (\wt A)=\{y\in\cD:\; C_0\G_0 y + C_1\G_1 y=0\}.
\end{equation}
\end{remark}
\begin{definition}\label{def1.7}
Let $\l\in\bC$ and let $\cK'$ be a Hilbert space.
An operator function
\begin{equation}\label{29.0}
Z(\cd, \l):\D\to [\cK',H]
\end{equation}
will be called a fundamental solution of the equation \eqref{1.23} if: (i) $Z(\cd, \l)$ is an operator solution of \eqref{1.23}; (ii) $Z(\cd, \l)\in \LH {\cK'}$ (and, hence, $Z(t,\l)h'\in\gN_\l(L_0)$ for all $h'\in\cK'$); (iii) for every $y\in\gN_\l(L_0)$ there is the unique $h'\in\cK'$ such that  $y(t)=Z(t,\l)h'$.
\end{definition}
Clearly, an operator function \eqref{29.0} is a fundamental solution of the equation \eqref{1.23} if and only if it is an operator solution of \eqref{1.23} and the equality $y=Z(t,\l)h'$ gives a bijective correspondence between all functions $y\in\gN_\l(L_0)$ and all $h'\in\cK'$.
\begin{lemma}\label{lem1.8}
1) For every operator  $Z\in [\cK', \gN_\l(L_0)]$ the relation
\begin{equation}\label{29.1}
Z(t,\l)h'=(Zh')(t),\quad h'\in\cK'
\end{equation}
define an operator solution $Z(\cd, \l)\in L_2' [\cK',H]$ of \eqref{1.23}. Conversely for each such a solution  there exists an operator $Z\in [\cK', \gN_\l(L_0)]$ such that \eqref{29.1} holds.

2) The equality \eqref{29.1} establishes a bijective correspondence between all fundamental solutions \eqref{29.0} of the equality \eqref{1.23} and all isomorphisms $Z\in [\cK', \gN_\l(L_0)]$.

3) Let $Z(\cd, \l)\in \LH{\cK'}$ be an operator solution of \eqref{1.23} and let $Z\in [\cK',\gN_\l(L_0)]$ be the corresponding operator \eqref{29.1} considered  as acting to the Hilbert space $\gH$. Then
\begin{equation}\label{29.2}
Z^*f=\smallint_0^b Z^*(t,\l)f(t)\, dt:=\lim_{\eta\uparrow b}\smallint_0^\eta Z^*(t,\l)f(t) \, dt, \quad f=f(t)\in\gH.
\end{equation}
\end{lemma}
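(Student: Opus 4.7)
The statement packages three closely linked claims about identifying operator solutions $Z(\cd,\l)\in [\cK',H]$ of \eqref{1.23} with bounded operators $Z:\cK'\to\gN_\l(L_0)$. My plan is to establish part (1) by the closed graph theorem in both directions, then deduce part (2) as an immediate reformulation of condition (iii) of Definition \ref{def1.7} combined with the open mapping theorem, and finally obtain part (3) by truncating $f$ and using the defining duality of $Z^*$ on each finite interval $[0,\eta]$.

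For the forward direction of part (1), given $Z\in[\cK',\gN_\l(L_0)]$, every element $Zh'$ lies in $\cD$ and is smooth, so the pointwise formula $Z(t,\l)h':=(Zh')(t)$ is literally meaningful. Using the canonical fundamental solution $Y_0(t,\l)$ from \eqref{1.25}, every $y\in\gN_\l(L_0)$ satisfies $y(t)=Y_0(t,\l)\wt y(0)$, which reduces matters to showing that the trace $E_0:\gN_\l(L_0)\to H^n\oplus H^n$, $E_0 y=\wt y(0)$, is bounded. I will verify that its graph is closed: if $y_n\to y$ in $\gH$ and $\wt y_n(0)\to\xi$ in $H^n\oplus H^n$, then $y_n(t)=Y_0(t,\l)\wt y_n(0)\to Y_0(t,\l)\xi$ pointwise in $t$, and comparison with an a.e.-convergent subsequence of $(y_n)$ forces $\wt y(0)=\xi$. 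Consequently $Z(t,\l)=Y_0(t,\l) E_0 Z\in[\cK',H]$ depends smoothly on $t$, solves \eqref{1.23}, and lies in $\LH{\cK'}$ because each column $Zh'$ already belongs to $\gH$. The converse is analogous: given an operator solution $Z(\cd,\l)\in\LH{\cK'}$, define $Zh':=Z(\cd,\l)h'\in\gN_\l(L_0)$; if $h'_n\to h'$ in $\cK'$ and $Zh'_n\to g$ in $\gH$, then the continuity of $Z(t,\l):\cK'\to H$ gives $Z(t,\l)h'_n\to Z(t,\l)h'=(Zh')(t)$ pointwise, while an a.e.-convergent subsequence of $Zh'_n$ in $\gH$ identifies $g=Zh'$, so the graph is closed.

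Part (2) is then immediate: condition (iii) of Definition \ref{def1.7} says precisely that the correspondence $h'\mapsto Z(\cd,\l)h'$ is a bijection onto $\gN_\l(L_0)$, so by part (1) and the open mapping theorem it is an isomorphism. For part (3) I set $f_\eta(t):=f(t)\chi_{[0,\eta]}(t)$, so that $f_\eta\to f$ in $\gH$ as $\eta\uparrow b$ and hence $Z^*f_\eta\to Z^*f$ in $\cK'$. On the finite interval $[0,\eta]$ the operator function $Z^*(\cd,\l)$ is norm continuous, so $Z^*(\cd,\l)f(\cd)$ is Bochner integrable there, and the one-line duality check $(Z^*f_\eta,h')_{\cK'}=(f_\eta,Zh')_\gH=\int_0^\eta (f(t),Z(t,\l)h')_H\,dt$ (valid for every $h'\in\cK'$) identifies $Z^*f_\eta$ with $\int_0^\eta Z^*(t,\l)f(t)\,dt$; passing to the limit $\eta\uparrow b$ yields \eqref{29.2}. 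The main technical hurdle is thus the boundedness claim in part (1), because pointwise evaluation on $\gN_\l(L_0)$ is a priori only a linear map into $H$; what renders it automatically continuous is the explicit representation through $Y_0(t,\l)$ together with the closed graph theorem applied to $E_0$.
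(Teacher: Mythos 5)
Your proof is correct and follows essentially the same route as the paper: both arguments rest on the fact that elements of $\gN_\l(L_0)$ are uniquely determined by their initial data $\wt y(0)$ together with the boundedness of the trace map $y\mapsto\wt y(0)$ on $\gN_\l(L_0)$, and then transfer boundedness between $Z$ and $Z(\cd,\l)$ by closed-graph arguments, with part 2) read off from condition (iii) of Definition \ref{def1.7}. The only difference is one of self-containment: you prove the boundedness of the trace map and the duality identity \eqref{29.2} directly (via the closed graph theorem and the truncation $f_\eta=f\chi_{[0,\eta]}$), whereas the paper cites \cite{Rof69,HolRof} for the former and refers to formula (3.70) of \cite{Mog} for the latter.
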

\begin{proof}
1) Let $\d_\l:\gN_\l(L_0)\to H^n\oplus H^n$ be a linear map given by $\d_\l y=\wt y(0), \; y\in \gN_\l(L_0)$. As is known \cite{Rof69,HolRof} the operator $\d_\l$ is bounded, that is $\d_\l \in [\gN_\l(L_0), H^n\oplus H^n]$.

Next for a given $Z\in [\cK', \gN_\l(L_0)]$ consider the operator solution $Z(\cd,\l):\D\to [\cK',H]$ of the equation \eqref{1.23} with the initial data $\wt Z(0,\l)=\d_\l Z $. It is clear that for every fixed $h'\in\cK'$ the vector function
\begin{equation}\label{29.3}
y=y(t,h'):=(Zh')(t)
\end{equation}
is a solution of \eqref{1.23} with $\wt y(0)=\d_\l Zh'=\wt Z(0,\l)h'$. Therefore $y=Z(t,\l)h'$ and by \eqref{29.3} the introduced operator function $Z(\cd,\l)$ satisfies \eqref{29.1}. Hence $Z(\cd,\l)\in\LH{\cK'}$.

Conversely, let $Z(\cd,\l)\in\LH{\cK'}$ be an operator solution of \eqref{1.23}. Then the equality \eqref{29.1} defines a linear map $Z:\cK'\to \gN_\l(L_0) $ obeying $\d_\l Z =\wt Z (0,\l)$  with bounded operators $\d_\l$ and $\wt Z(0,\l)$. This and the equality  $\Ker\,\d_\l=\{0\}$ imply that the operator $Z$ is closed and, consequently, bounded.

The statement 2) is immediate from 1).

3) The proof of   \eqref{29.2} is similar to that of the formula (3.70) in \cite{Mog}
\end{proof}
The following theorem is immediate from Lemma \ref{lem1.8}, 2).
\begin{theorem}\label{th1.9}
1) For every $\l\in\bC$ and  for every Hilbert space $\cK'$  with
\begin{equation}\label{29.4}
\dim \cK'=\dim\gN_\l (L_0)
\end{equation}
there exists a fundamental solution $Z(\cd,\l):\D\to [\cK',H]$ of the equation \eqref{1.23}. Conversely, for  every fundamental solution  \eqref{29.0} the equality \eqref{29.4} holds.

2) Let $Z_0(\cd,\l):\D\to [\cK',H]$ be a fundamental solution of
\eqref{1.23}. Then the equality
\begin{equation*}
Z(t,\l)=Z_0(t,\l)X
\end{equation*}
gives a bijective correspondence between all fundamental solutions
\eqref{29.0} and all bounded isomorphisms $X\in [\cK']$.
\end{theorem}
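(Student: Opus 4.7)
The proof is a direct corollary of Lemma \ref{lem1.8}, part 2), which establishes via the formula \eqref{29.1} a bijective correspondence between fundamental solutions $Z(\cdot,\l):\D\to [\cK',H]$ of \eqref{1.23} and bounded isomorphisms $Z\in [\cK',\gN_\l(L_0)]$. My plan is to translate each of the two statements into a statement about isomorphisms between Hilbert spaces and then apply elementary facts.

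For statement 1), I would argue as follows. A bounded isomorphism $Z\in [\cK',\gN_\l(L_0)]$ exists if and only if $\cK'$ and $\gN_\l(L_0)$ are isomorphic as Hilbert spaces, which (since both are Hilbert spaces) is equivalent to $\dim \cK'=\dim \gN_\l(L_0)$. Hence, by Lemma \ref{lem1.8}.2, a fundamental solution $Z(\cdot,\l):\D\to [\cK',H]$ exists precisely when \eqref{29.4} holds. This gives both directions at once: existence of some fundamental solution for a given $\cK'$ is equivalent to the dimension condition, and any given fundamental solution forces \eqref{29.4}.

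For statement 2), I would fix a fundamental solution $Z_0(\cdot,\l)$ and let $Z_0\in [\cK',\gN_\l(L_0)]$ denote the corresponding isomorphism via \eqref{29.1}. Given any other fundamental solution $Z(\cdot,\l):\D\to [\cK',H]$ with corresponding isomorphism $Z\in [\cK',\gN_\l(L_0)]$, set $X:=Z_0^{-1}Z\in [\cK']$, which is clearly a bounded isomorphism of $\cK'$. Applying \eqref{29.1} yields
\begin{equation*}
Z(t,\l)h'=(Zh')(t)=(Z_0 X h')(t)=Z_0(t,\l)X h',\qquad h'\in\cK',
\end{equation*}
which is the desired factorization $Z(t,\l)=Z_0(t,\l)X$. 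Conversely, for any bounded isomorphism $X\in [\cK']$, the composition $Z_0 X\in [\cK',\gN_\l(L_0)]$ is again an isomorphism, so by Lemma \ref{lem1.8}.2 the operator function $Z_0(t,\l)X$ is a fundamental solution of \eqref{1.23}. The assignment $X\mapsto Z_0(\cdot,\l)X$ is plainly injective (evaluate at any $h'$ and use that $Z_0(\cdot,\l)$ corresponds to an injective operator $Z_0$), so it is bijective.

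There is no real obstacle here since all the analytic content --- that fundamental solutions correspond to bounded isomorphisms into the defect subspace --- is already in Lemma \ref{lem1.8}. The only point requiring any care is the invocation of the identification of Hilbert spaces by dimension, but since $\gN_\l(L_0)$ is a Hilbert subspace of $\gH$ and $\cK'$ is an arbitrary Hilbert space, this is immediate.
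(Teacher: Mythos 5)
Your argument is correct and follows exactly the route of the paper, which simply notes that the theorem is immediate from Lemma \ref{lem1.8}, part 2); you have merely spelled out the routine translation between fundamental solutions and isomorphisms $Z\in [\cK',\gN_\l(L_0)]$. Nothing further is needed.
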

In the following theorem we show that there exist fundamental  solutions  $Z(t,\l)$ of \eqref{1.23} with holomorphic quasi-derivatives of all orders.
\begin{theorem}\label{th1.9a}
Let $\wt A$ be a proper extension of $L_0$ with nonempty resolvent set $\rho (\wt A)$. Then there exists a family of fundamental solutions $Z(\cd,\l):\D\to [\cK',H], \;\l\in\rho (\wt A)$ of the equation \eqref{1.23} such that for every fixed $t\in\D$ all quasi-derivatives  $Z^{[k]}(t,\cd), \;k=0\div (2n-1)$ are holomorphic on $\rho (\wt A)$.
\end{theorem}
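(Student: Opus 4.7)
The plan is to transport the defect subspaces of $L_0$ along $\rho(\wt A)$ by a Krein-type formula and then interpret the resulting transporter as the required fundamental solution. I would fix a base point $\l_0\in\rho(\wt A)$, set $\cK':=\gN_{\l_0}(L_0)$, and for every $\l\in\rho(\wt A)$ introduce the operator
\[
\Phi(\l):=I+(\l-\l_0)(\wt A-\l)^{-1},
\]
viewed as a bounded linear map from $\gN_{\l_0}(L_0)$ into $\gH$.

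The first step is to verify that $\Phi(\l)$ is a topological isomorphism of $\gN_{\l_0}(L_0)$ onto $\gN_\l(L_0)$. For $h'\in\gN_{\l_0}(L_0)$ the inclusion $(\wt A-\l)^{-1}h'\in\cD(\wt A)\subset\cD$ and the identity $L(\wt A-\l)^{-1}h'=\wt A(\wt A-\l)^{-1}h'=h'+\l(\wt A-\l)^{-1}h'$ combine with $Lh'=\l_0 h'$ to yield $L\Phi(\l)h'=\l\Phi(\l)h'$, so $\Phi(\l)\cK'\subset\gN_\l(L_0)$. A short computation using the Hilbert resolvent identity for $\wt A$ shows that $\Psi(\l):=I+(\l_0-\l)(\wt A-\l_0)^{-1}$ is a two-sided inverse of $\Phi(\l)$. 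Having the isomorphism $\Phi(\l):\cK'\to\gN_\l(L_0)$ in hand, I would set
\[
Z(t,\l)h':=(\Phi(\l)h')(t),\qquad h'\in\cK',\quad t\in\D,\ \l\in\rho(\wt A),
\]
and invoke Lemma \ref{lem1.8} to deduce that $Z(\cd,\l)\in\LH{\cK'}$ is an operator solution of \eqref{1.23}; since $\Phi(\l)$ is an isomorphism onto $\gN_\l(L_0)$, the same lemma further identifies $Z(\cd,\l)$ as a fundamental solution in the sense of Definition \ref{def1.7}.

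It then remains to establish holomorphy on $\rho(\wt A)$ of each quasi-derivative $Z^{[k]}(t,\cd)$, $k=0,\dots,2n-1$, $t\in\D$. I would reduce this to holomorphy of the initial data $\wt Z(0,\l)\in[\cK',H^n\oplus H^n]$. Holomorphy of $\l\mapsto(\wt A-\l)^{-1}\in[\gH]$ on $\rho(\wt A)$ makes $\l\mapsto\Phi(\l)h'$ holomorphic as a $\gH$-valued function, and since $L\Phi(\l)h'=\l\Phi(\l)h'$ is holomorphic as well, the same function is holomorphic as a $\cD$-valued function when $\cD$ carries the graph norm of $L$. The boundary evaluation $\cD\ni y\mapsto\wt y(0)\in H^n\oplus H^n$ is continuous in that graph norm, so $\l\mapsto\wt Z(0,\l)h'$ is holomorphic for every $h'\in\cK'$; weak operator holomorphy of $\wt Z(0,\l)$ on $\rho(\wt A)$ then upgrades to norm holomorphy. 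Formula \eqref{1.25a} gives $Z(t,\l)=Y_0(t,\l)\wt Z(0,\l)$ and hence $Z^{[k]}(t,\l)=Y_0^{[k]}(t,\l)\wt Z(0,\l)$; combined with the classical fact that the quasi-derivatives of the canonical matrix solution $Y_0(t,\cd)$ depend holomorphically on the spectral parameter, this delivers holomorphy of every $Z^{[k]}(t,\cd)$ on $\rho(\wt A)$.

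The main obstacle is the only non-algebraic ingredient: continuity of the evaluation $y\mapsto\wt y(0)$ with respect to the graph norm of $L$. This is a standard regularity assertion for the maximal operator at the regular endpoint $0$ (a trace-type inequality obtained via a cutoff and a Sobolev embedding on a finite subinterval), and it also falls out directly from the continuity of the boundary maps $\G_0,\G_1$ of any decomposing $D$-triplet \eqref{1.30} supplied by Theorem \ref{th1.10}. Once this continuity is in place, every remaining step is routine.
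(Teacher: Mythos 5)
Your proposal is correct and follows essentially the same route as the paper: the author likewise fixes $\l_0\in\rho(\wt A)$, forms $\cZ(\l)=\cZ_0+(\l-\l_0)(\wt A-\l)^{-1}\cZ_0$ with $\cZ_0$ an isomorphism onto $\gN_{\l_0}(L_0)$, identifies the resulting family as fundamental solutions via Lemma \ref{lem1.8}, and derives holomorphy of $\wt Z(0,\cd)$ from the boundedness of the trace map $y\mapsto\wt y(0)$ on $\cD$ equipped with the graph norm (which the paper establishes through the decomposition $\cD_+=\cD_0\oplus\gN_i(L_0)\oplus\gN_{-i}(L_0)$ rather than your trace-inequality or $\G_j$-continuity argument, but these are equivalent justifications of the same fact). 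Your explicit verification of the two-sided inverse $\Psi(\l)$ and the passage through $Z^{[k]}(t,\l)=Y_0^{[k]}(t,\l)\wt Z(0,\l)$ merely fill in steps the paper leaves to the reader.
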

\begin{proof}
Denote by $\cD_+$ the Hilbert space of all functions $y\in\cD$ with the inner product
\begin{equation*}
(y,z)_+=(y,z)_\gH +(Ly,Lz)_\gH, \qquad y,z\in\cD_+
\end{equation*}
and let $\d:\cD_+\to H^n\oplus H^n$ be a linear map given by $\d y=\wt y(0), \; y\in\cD_+$.  Since the norms $||\cd||_+$ and $||\cd||_\gH$ are equivalent on a subspace $\gN_\l (L_0)$, the operator $\d \up \gN_\l (L_0)(=\d_\l)$ is bounded. Moreover $\d\up \cD_0=0$ by definition of $\cD_0$. This and the orthogonal decomposition
\begin{equation*}
\cD_+=\cD_0\oplus \gN_i(L_0)\oplus \gN_{-i}(L_0)
\end{equation*}
imply that $\d\in [\cD_+,H^n\oplus H^n]$.

Next assume that $\l_0\in\rho (\wt A)$ and $\cZ_0$ is an isomorphism of a Hilbert space $\cK'$ onto $\gN_{\l_0}(L_0)$. Since the resolvent $(\wt A-\l)^{-1} \; (\l\in\rho(\wt A))$ is a holomorphic operator function with values in $[\gH,\cD_+]$, the equality
\begin{equation}\label{29.5}
\cZ(\l):= \cZ_0+(\l-\l_0)(\wt A-\l)^{-1}\cZ_0, \quad \l\in \rho (\wt A)
\end{equation}
defines a holomorphic operator function $\cZ(\cd):\rho (\wt A)\to [\cK', \cD_+]$. Moreover one can easily verify that $\cZ(\l)\cK'=\gN_\l(L_0)$ and $\Ker \cZ(\l)=\{0\}$. Therefore by Lemma\ref{lem1.8}, 2)  the relation
\begin{equation}\label{29.6}
\cZ(t,\l)h':=(\cZ(\l)h')(t), \quad h'\in\cK', \quad t\in\D
\end{equation}
 defines a family of fundamental solutions of the equation \eqref{1.23} with $\wt \cZ(0,\l)=\d \cZ(\l)$. Hence the operator function $\wt \cZ(0, \cd)$  is holomorphic on $\rho (\wt A)$ and, consequently, so is the function $\wt \cZ(t, \cd)$ for every fixed $t\in\D$.
\end{proof}

Let $\Pi=\bta$ be a decomposing $D$-triplet \eqref{1.30} for $L$ and let $Z(\cd, \l):\D\to [\cK',H]\; (\l\in\bC)$ be a fundamental solution of the equation \eqref{1.23}. For every operator pair $\t=\op$ $\in\CA$ introduce the operator $T\in [\cK',\cK]$ by
\begin{equation}\label{2.11}
T=(C_0\G_0 + C_1\G_1)Z,
\end{equation}
where $Z\in [\cK', \gN_\l(L_0)]$ is the corresponding  isomorphism \eqref{29.1} (see Lemma \ref{lem1.8}, 2)).   Using the block-matrix representation \eqref{2.3} one rewrites \eqref{2.11} as
\begin{equation}\label{2.12}
Th'=(\hat C_1 Z^{(1)}(0,\l)+ \hat C_2 Z^{(2)}(0,\l))h'+(C_0'\G_0' + C_1'\G_1')(Z(t,\l)h'),\quad h'\in\cK'.
\end{equation}
In the following theorem we describe the spectrum of a proper extension $\wt A\in\exl$ in terms of boundary conditions and a fundamental solution $Z(\cd,\l)$.
\begin{theorem}\label{th2.3}
Let under the above suppositions $\wt A\in\exl$ be a proper extension given by the boundary conditions \eqref{2.5}. Then for every $\l\in\hat\rho (L_0)$ the following holds
\begin{align}
\l\in\rho (\wt A)\iff 0\in \rho (T), \qquad \l\in \s_j (\wt A)\iff 0\in \s_j (T),
\;\; j\in\{p,c,r\} \quad\label{2.13}\\
\l\in\hat\rho (\wt A)\iff 0\in \hat\rho (T),\qquad  \ov {\cR (\wt  A-\l)}=\cR (\wt  A-\l) \iff \ov {\cR (T)}= \cR
(T)\label{2.14}
\end{align}
\end{theorem}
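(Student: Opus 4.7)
The plan is to factor $T = B_\l \circ Z$, where $B_\l := (C_0\G_0+C_1\G_1)\up\gN_\l(L_0)\in[\gN_\l(L_0),\cK]$ is the restriction of the boundary form and $Z\in[\cK',\gN_\l(L_0)]$ is the topological isomorphism furnished by Lemma \ref{lem1.8}, 2). Since $Z$ is a bounded isomorphism, the kernel, image, and the associated closedness/denseness properties of $T$ agree with those of $B_\l$, so each of the four equivalences may be proved with $B_\l$ in place of $T$. The point-spectrum equivalence is then immediate from Remark \ref{rem2.2} and the boundary description \eqref{2.8a}: $\Ker(\wt A - \l) = \cD(\wt A)\cap\gN_\l(L_0) = \Ker B_\l$, whence $\l\in\sigma_p(\wt A) \iff 0\in\sigma_p(T)$.

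For the range equivalences I would introduce a right-parametrix. For $\l\in\hat\rho(L_0)$, the orthogonal decomposition $\gH = \cR(L_0-\l)\oplus\gN_{\ov\l}(L_0)$ combined with the von Neumann-type splitting $\cD = \cD(L_0)\dotplus\gN_\l(L_0)\dotplus\gN_{\ov\l}(L_0)$ produces a bounded right-parametrix $R_\l\in[\gH,\cD_+]$ of $L-\l$. Writing every $y\in\cD$ with $(L-\l)y = f$ as $y = R_\l f + Zh'$ with $h'\in\cK'$, the membership $y\in\cD(\wt A)$ reads $Th' = \Phi f$, where $\Phi := -(C_0\G_0+C_1\G_1)R_\l\in[\gH,\cK]$. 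Thus $\cR(\wt A-\l) = \Phi^{-1}(\cR(T))$, and the key identity $\Phi(\gH)+\cR(T) = \cK$ — which follows from $(C_0\G_0+C_1\G_1)\cD = \cK$ (surjectivity of $\G=(\G_0\;\;\G_1)^\top$ on the $D$-triplet together with admissibility of $(C_0\;\;C_1)$) and the splitting $\cD = R_\l(\gH)+Z\cK'$ — yields at once $\cR(T) = \cK \iff \cR(\wt A-\l) = \gH$. Continuity of $\Phi$ moreover shows that $\cR(T)$ closed implies $\cR(\wt A-\l)$ closed.

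The converse closedness implication, and the density equivalence which distinguishes $\sigma_c$ from $\sigma_r$, I would obtain by applying the kernel argument above to the adjoint extension $\wt A^*$ at $\ov\l$. By Theorem \ref{th2.1}, $\wt A^*$ is the proper extension governed by the $\times$-adjoint pair $\t^\times = \{(C_{0\times},C_{1\times});\cK_\times\}$, and the corresponding operator $T_\times := (C_{0\times}\G_0+C_{1\times}\G_1)Z_\times$, built from a fundamental solution $Z_\times(\cd,\ov\l)$ of $l[y]-\ov\l y = 0$, satisfies $\Ker(\wt A^*-\ov\l)\cong\Ker T_\times$ by the same argument. Lemma \ref{lem0.1} identifies $T^*$ with $T_\times$ up to isomorphism, whence $\cR(\wt A-\l)^\perp = \Ker(\wt A^*-\ov\l) \cong \Ker T^* = \cR(T)^\perp$, and the closed-range theorem applied to the closed operator $\wt A-\l$ and to the bounded operator $T$ transfers both density and closedness of ranges in both directions, delivering the $\hat\rho$, $\sigma_c$, and $\sigma_r$ equivalences. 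The principal obstacle will be constructing the parametrix $R_\l$ when $\l\in\hat\rho(L_0)\cap\bR$ — this requires the a priori existence of some proper extension with $\l$ in its resolvent set — together with unwinding the precise isomorphism $T^*\cong T_\times$ of Lemma \ref{lem0.1}, since the four auxiliary Hilbert spaces $\cK'$, $\cK_\times$, $\cK'_\times$, $\cK$ must be matched by tracking the block decompositions \eqref{2.3}--\eqref{2.4}.
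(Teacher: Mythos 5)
Your opening reduction --- factoring $T=B_\l\circ Z$ with $B_\l=(C_0\G_0+C_1\G_1)\up\gN_\l(L_0)$ and using the bijectivity of $Z$ from Lemma \ref{lem1.8} to transfer all spectral-type properties from $B_\l$ to $T$ --- is exactly the paper's reduction. But that is also essentially the \emph{whole} of the paper's proof: the author then simply cites Proposition 3.17 of \cite{MalMog02}, which already contains all four equivalences for the operator $(C_0\G_0+C_1\G_1)\up\gN_\l(L_0)$. Everything after your first paragraph is therefore an attempt to re-prove that cited proposition, and that is where a genuine gap appears. Your treatment of the point spectrum and of surjectivity is sound: the identity $\cR(\wt A-\l)=\Phi^{-1}(\cR(T))$ together with $\Phi(\gH)+\cR(T)=\cK$ does give $\cR(T)=\cK\iff\cR(\wt A-\l)=\gH$, and preimages of closed sets under the continuous $\Phi$ are closed. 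The real-$\l$ parametrix issue you flag is real but fillable, since $\l\in\hat\rho(L_0)\cap\bR$ forces $n_+(L_0)=n_-(L_0)$ and hence the existence of a canonical selfadjoint extension with $\l$ in its resolvent set.

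The gap is in the duality step. Lemma \ref{lem0.1} does \emph{not} ``identify $T^*$ with $T_\times$ up to isomorphism'': it asserts only the equivalence of two invertibility conditions and an identity for the \emph{inverses}, so it says nothing about kernels, closures of ranges, or cokernels when $0\notin\rho(T)$ --- which is precisely the regime needed for the $\s_c$, $\s_r$ and $\hat\rho$ equivalences. Without an honest identification of $\Ker T^*$ with $\Ker T_\times$ (and of closedness of $\cR(T)$ with closedness of $\cR(T_\times)$), your closed-range-theorem argument does not close: from $\cR(\wt A-\l)$ closed you can pass to $\cR(\wt A^*-\ov\l)$ closed, but you have no implication from there back to $\cR(T_\times)$ or to $\cR(T)$ closed, and the preimage relation $\cR(\wt A-\l)=\Phi^{-1}(\cR(T))$ yields closedness only in the forward direction and controls density in neither direction (in general $\Phi^{-1}(\cL)$ can be closed, even equal to $\Ker\Phi$, while the subspace $\cL$ is neither closed nor dense). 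So the converse closedness implication and the density equivalence separating $\s_c$ from $\s_r$ are not established. These are exactly the points for which the paper leans on \cite{MalMog02}, Proposition 3.17; to make your argument self-contained you would have to prove the kernel/range duality between $(C_0\G_0+C_1\G_1)\up\gN_\l(L_0)$ and $(C_{0\times}\G_0+C_{1\times}\G_1)\up\gN_{\ov\l}(L_0)$ directly from the Green identity \eqref{1.10} and the definition of $\t^\times$, not from Lemma \ref{lem0.1}.
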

\begin{proof}
By using Proposition 3.17 in \cite{MalMog02}  one can derive the relations  \eqref{2.13} -- \eqref{2.15} with the operator $(C_0\G_0 + C_1\G_1)\up \gN_\l (L_0)$ in place of $T$. This and  bijectivity  of $Z$ in \eqref{2.11} yield the statement of the theorem.
\end{proof}
\begin{remark}\label{rem2.4}
In \cite{RofHol84,HolRof} a fundamental solution of the equation \eqref{1.23} is defined as an operator function \eqref{29.0} obeying the conditions (i)--(iii) of Definition \ref{def1.7} and, in addition, a selfadjoint boundary condition at the point $b$ (which exists only in the case $n_{b+}=n_{b-}$). In this connection note that statements of Theorems \ref{th1.9a} and \ref{th2.3} complement and generalize similar results obtained in \cite{RofHol84,HolRof}.
\end{remark}
\subsection{Resolvents of proper extensions of the minimal operator}
Let as before $\Pi=\bta$ be a decomposing $D$-triplet \eqref{1.30} for $L$, let $\t=\op$ $\in\CA$ be an operator pair \eqref{2.3} and let $\wt A\in\exl$ be the corresponding extension \eqref{2.5}. Assume that $\l\in \rho (\wt A)$ and let $Z(\cd, \l):\D\to [\cK',H] $ be a fundamental solution of the equation \eqref{1.23} (such a solution exists in view of Theorem \ref{th1.9}). It follows from \eqref{2.13} that the corresponding operator $T\in [\cK',\cK]$ (see \eqref{2.11}) is invertible. This allows us to introduce the operator function $Y_\t(\cd, \ov\l):\D\to [\cK',H]$ as the operator solution of the equation $l[y]-\ov\l y=0$ with the initial data
\begin{equation}\label{2.18}
Y_\t^{(1)}(0,\ov\l)=-\hat C_2^* T^{-1*}, \qquad Y_\t^{(2)}(0,\ov\l)=\hat C_1^* T^{-1*}.
\end{equation}
Next assume that  $\t^\tm =\{(C_{0\times},C_{1\times});\cK_\times\} \in\CA$ is a $\tm$-adjoint operator pair \eqref{2.4}. Then the adjoint extension $\wt A^*$ is defined by \eqref{2.6} and $\ov\l\in \rho (\wt A^*)$. Let $Z(\cd, \ov\l):\D\to [\cK_\tm',H]$ be a fundamental solution of the equation $l[y]-\ov\l y=0$ and let $T_\tm\in [\cK_\tm',\cK_\tm]$
be the operator given by
\begin{equation}\label{2.19}
T_\tm h'=(\hat C_{1\tm} Z^{(1)}(0,\ov\l)+ \hat C_{2\tm} Z^{(2)}(0,\ov\l))h'+ (C_{0\tm}'\G_0' + C_{1\tm}'\G_1')(Z(t,\ov\l)h'), \;\;\; h'\in\cK_\tm'.
\end{equation}
Then in view of Theorem \ref{th2.3} $0\in\rho (T_\tm)$, which makes it possible to define the operator function $Y_{\t^\tm}(\cd, \l):\D\to [\cK_\tm',H]$ as the operator solution of \eqref{1.23} with the initial data
\begin{equation}\label{2.20}
Y_{\t^\tm}^{(1)}(0,\l)=-\hat C_{2\tm}^* T_\tm^{-1*}, \qquad Y_{\t^\tm}^{(2)}(0,\l)=\hat C_{1\tm}^* T_\tm ^{-1*}.
\end{equation}
One can easily verify that for  a given fundamental solution $Z(\cd,\l)$ ($Z(\cd, \ov\l)$) the operator function $Y_{\t}(\cd, \ov\l)$ ($Y_{\t^\tm}(\cd, \l)$) is uniquely defined by $\t$, i.e., it does not depend on the choice of the equivalent representation $\t=\{(C_{0}, C_{1}); \cK\}$ (respectively,  $\t^\tm=\{(C_{0\times},C_{1\times});\cK_\times\} $).
\begin{definition}\label{def2.6}
The operator function $G_\t(\cd,\cd,\l):\D\tm\D\to [H]$, given by
\begin{equation}\label{2.22}
G_\t (x,t,\l)=\begin{cases} Z(x,\l)\, Y_\t^*(t,\ov\l), \;\; x>t \cr Y_{\t^\tm}(x,\l)\, Z^* (t,\ov\l), \;\; x<t \end{cases}, \quad \l\in\rho (\wt A)
\end{equation}
will be called the Green function, corresponding to an operator pair $\t\in\CA$.
\end{definition}
It is easily seen that for a given operator pair $\t$ the Green function \eqref{2.22} does not depend on the choice of fundamental solutions $Z(\cd,\l)$ and  $Z(\cd, \ov\l)$.
\begin{theorem}\label{th2.8}
Suppose that $\Pi=\bta$ is a decomposing $D$-triplet \eqref{1.30} for $L$, $\t=\op\in\CA$ is the operator pair \eqref{2.3}, $\wt A\in\exl$ is the corresponding extension \eqref{2.5}, $\l\in\rho (\wt A)$ and $G_\t (x,t,\l)$ is the Green function \eqref{2.22}. Then the resolvent  $(\wt A-\l)^{-1}\in [\gH]$ is the integral operator, given  by
\begin{equation}\label{2.24}
((\wt A-\l)^{-1}f)(x)=\smallint\limits_0^b  G_\t (x,t,\l)f(t)\, dt:=\lim_{\eta \uparrow b}\smallint\limits_0^\eta   G_\t (x,t,\l)f(t)\, dt,\;\; f=f(\cd)\in\gH .
\end{equation}
\end{theorem}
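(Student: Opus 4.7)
The plan is to verify that, for fixed $f\in\gH$, the right-hand side of \eqref{2.24} defines a function $y\in\cD(\wt A)$ which solves $(L-\l)y=f$; since $\l\in\rho(\wt A)$, this forces $y=(\wt A-\l)^{-1}f$. Writing out the two branches of $G_\t$ gives
\begin{equation*}
y(x)=Z(x,\l)\,C_1(x)+Y_{\t^\tm}(x,\l)\,C_2(x),\quad C_1(x):=\smallint_0^x Y_\t^*(t,\ov\l)f(t)\,dt,\;\;C_2(x):=\smallint_x^b Z^*(t,\ov\l)f(t)\,dt.
\end{equation*}

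For the differential equation, I would combine the two pieces into the single operator solution $W(x):=(Z(x,\l)\;\;Y_{\t^\tm}(x,\l)):\cK'\oplus\cK'_\tm\to H$ of $l[u]-\l u=0$, so $y=W\,(C_1,C_2)^\top$. Lemma~\ref{lem1.6} then reduces $l[y]-\l y=f$ to the Wronskian-type identity
\begin{equation*}
\wt Z(x,\l)\,Y_\t^*(x,\ov\l)-\wt Y_{\t^\tm}(x,\l)\,Z^*(x,\ov\l)=J_0,\qquad x\in\D,
\end{equation*}
where $J_0 h:=\{\underbrace{0,\dots,0}_n,-h,\underbrace{0,\dots,0}_{n-1}\}\in H^n\oplus H^n$. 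Since $Z(\cd,\l),Y_{\t^\tm}(\cd,\l)$ solve $l[u]=\l u$ and $Y_\t(\cd,\ov\l),Z(\cd,\ov\l)$ solve $l[u]=\ov\l u$, a Lagrange-bracket computation shows that the left-hand side is $x$-independent; its value at $x=0$ is extracted from the initial data \eqref{2.18}, \eqref{2.20} and the defining identities \eqref{2.11}, \eqref{2.19} of $T,T_\tm$, with Lemma~\ref{lem0.1} linking the $\tm$-adjoint data $C_{j\tm}$ to $C_j^*$.

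For the boundary condition $(C_0\G_0+C_1\G_1)y=0$ (cf.\ Remark~\ref{rem2.2}), I would handle the two endpoints separately. At the regular end $C_1(0)=0$ gives $\wt y(0)=\wt Y_{\t^\tm}(0,\l)\,C_2(0)$ with $C_2(0)=Z^*f$ by Lemma~\ref{lem1.8}(3), and inserting \eqref{2.20} yields $\hat C_1 y^{(1)}(0)+\hat C_2 y^{(2)}(0)=(\hat C_2\hat C_{1\tm}^*-\hat C_1\hat C_{2\tm}^*)T_\tm^{-1*}C_2(0)$. At the singular end, the identity \eqref{1.32} together with surjectivity of $(\G'_0,\G'_1)^\top$ shows that $\G'_j$ annihilates any $w\in\cD$ for which $[w,z](b)=0$ for every $z\in\cD$; combined with $C_2(b)=0$, this lets one replace $\G'_j y$ by $\G'_j(Z(\cd,\l)\,C_1(b))$ with $C_1(b)=Y_\t^*f$. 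Assembling the two contributions and invoking once more \eqref{2.11}, \eqref{2.19} and Lemma~\ref{lem0.1} makes the full boundary expression collapse to $0$, so $y\in\cD(\wt A)$.

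The main obstacle is the singular-endpoint step: $\G'_j$ is defined abstractly rather than by any pointwise formula at $b$, so one must rigorously justify the reduction of $\G'_j y$ to $\G'_j(Z(\cd,\l)C_1(b))$ by showing that the ``tail'' $Y_{\t^\tm}(\cd,\l)C_2(\cd)+Z(\cd,\l)(C_1(\cd)-C_1(b))$ lies in the $b$-vanishing part of $\cD$ on which $\G'_j$ vanishes. This is precisely where the definitions of $T$ and $T_\tm$ as mirror boundary data through Lemma~\ref{lem0.1} pay off, producing the exact cancellation between the regular- and singular-end contributions and completing the identification $y=(\wt A-\l)^{-1}f$.
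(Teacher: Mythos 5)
Your overall strategy---verify directly that the right-hand side of \eqref{2.24} produces a function $y\in\cD(\wt A)$ with $l[y]-\l y=f$ and then invoke $\l\in\rho(\wt A)$---is a legitimate alternative to the paper's route, which instead starts from the abstract Krein formula \eqref{1.17}, computes the kernels of $(A_0-\l)^{-1}$ and of the correction term $\g_+(\l)(C_0+C_1M_+(\l))^{-1}C_1\g_-^*(\ov\l)$ separately, and only afterwards identifies their difference with $G_\t$. However, your central step contains a genuine gap. The expression $\wt Z(x,\l)\,Y_\t^*(x,\ov\l)-\wt Y_{\t^\tm}(x,\l)\,Z^*(x,\ov\l)$ is \emph{not} a Lagrange bracket, and no Lagrange-bracket computation shows it is $x$-independent: the Lagrange identity \eqref{1.28} yields constancy of quantities of the form $V^{(2)*}(x)U^{(1)}(x)-V^{(1)*}(x)U^{(2)}(x)$, which are indexed by the parameter spaces $\cK'$, $\cK'_\tm$ (whose dimensions are the deficiency indices), whereas your expression is of the transposed form ``solution times adjoint of solution'' indexed by $H^n\oplus H^n$. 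The two forms are interchangeable only when $W=(Z(\cd,\l)\;\,Y_{\t^\tm}(\cd,\l))$ is a \emph{full} fundamental system with $\wt W(x)$ invertible, i.e.\ when $\dim(\cK'\oplus\cK'_\tm)=2n\dim H$, which fails whenever the deficiency indices are not maximal. Indeed, by \eqref{1.25a} your left-hand side equals $\wt Y_0(x,\l)\,\Phi\,Y_0^*(x,\ov\l)$ with $\Phi:=\wt Z(0,\l)\wt Y_\t^*(0,\ov\l)-\wt Y_{\t^\tm}(0,\l)\wt Z^*(0,\ov\l)$, and this is constant in $x$ precisely because $\Phi=-J_{H^n}$ (via the symplectic identity $\wt Y_0(x,\l)J_{H^n}\wt Y_0^*(x,\ov\l)=J_{H^n}$, which \emph{does} follow from Lagrange plus invertibility of $\wt Y_0$). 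So you cannot establish constancy first and read off the value at $x=0$ afterwards: you must prove $\Phi=-J_{H^n}$ outright, and that requires the relations $T=C_0+C_1M_+(\l)$ and $T_\tm=C_{1*}+C_{0*}M_-(\ov\l)$ of Lemma \ref{lem1.2}, the block structure \eqref{1.32a}--\eqref{1.37b}, and Lemma \ref{lem0.1}---essentially the matrix computation that occupies Step 2 of the paper's proof and that your outline does not carry out.

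A second gap concerns general $f\in\gH$. The solution $Y_\t(\cd,\ov\l)$ is in general \emph{not} square-integrable (it carries a $c(\cd,\ov\l)$-component, cf.\ \eqref{2.49}), so $C_1(b)=\smallint_0^bY_\t^*(t,\ov\l)f(t)\,dt$ need not exist, and $y\in\gH$ (hence $y\in\cD$) is not a priori clear; your replacement of $\G'_jy$ by $\G'_j(Z(\cd,\l)C_1(b))$ is only meaningful for $f$ vanishing near $b$, where the ``tail'' is identically zero on a neighbourhood of $b$ and the annihilation of $\G'_j$ is immediate. One must therefore first prove \eqref{2.24} on the dense set of such $f$ and then extend using the boundedness of $(\wt A-\l)^{-1}$ and the convergence statement \eqref{29.2}; this extension (the paper's Step 3, which also treats $\l\in\bC_-$ and real $\l\in\rho(\wt A)$) is missing from your outline, although your direct method, once repaired, would have the advantage of not requiring the separate treatment of the two half-planes that the paper's reliance on \eqref{1.17} forces.
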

\begin{proof} \emph{Step1.} Let $\gH_b$ be the set of all functions $f\in\gH$ such that $f(t)=0$ on some interval $(\eta, b)\subset\D$ (depending on $f$). First we show that
\begin{equation}\label{2.26}
((A_0-\l)^{-1}f)(x)=\smallint\limits_0^b  G_0 (x,t,\l)f(t)\, dt,\;\;\;
\; f=f(\cd)\in\gH_b, \quad\; \l\in\bC_+ ,
\end{equation}
where $A_0$ is the symmetric extension \eqref{1.33} and the operator kernel $G_0 (x,t,\l)$ is
\begin{equation}\label{2.27}
G_0 (x,t,\l)=\begin{cases} -v_0(x,\l)\, c^*(t,\ov\l), \;\; x>t \cr -c(x,\l)\, v_0^* (t,\ov\l), \;\; x<t \end{cases}, \quad \l\in\bC_+\cup\bC_-.
\end{equation}
To prove  \eqref{2.26} it is sufficient to show that for every $f=f(t)\in\gH_b$ the function
\begin{equation}\label{2.28}
y=y(x,\l):=\smallint\limits_0^b  G_0 (x,t,\l)f(t)\, dt, \quad \l\in\bC_+
\end{equation}
belongs to $\cD (A_0)$ and obeys the equality $l[y]-\l y=f$.

It follows from \eqref{2.28} and \eqref{2.27} that
\begin{equation}\label{2.29}
y=y(x,\l)=c(x,\l)C_1(x)+v_0(x,\l)C_2(x)=Y_v(x,\l)C(x), \quad \l\in\bC_+
\end{equation}
where
\begin{align}
C_1(x)=-\smallint\limits_x^b v_0^*(t,\ov \l)f(t)\, dt, \qquad  C_2(x)=-\smallint\limits_0^x c^*(t,\ov \l)f(t)\, dt, \qquad\label{2.30}\\
Y_v(x,\l)=(c(x,\l)\;\;v_0(x,\l)):H^n\oplus H^n\to H, \quad \l\in\bC_+\cup \bC_-,\label{2.31}\\
C(x)=\{C_1(x),C_2(x)\}\;(\in H^n\oplus H^n).\qquad\qquad\label{2.32}
\end{align}
Next we show that the functions \eqref{2.31}, \eqref{2.32} satisfy hypothesis of Lemma \ref{lem1.6}.

In view of \eqref{1.37a} and \eqref{1.37b} $Y_v(\cd,\l)$ is the solution of \eqref{1.23} with the initial data
\begin{equation}\label{2.32a}
\wt Y_v(0,\l)=\begin{pmatrix} c^{(1)}(0,\l) & v_0^{(1)}(0,\l) \cr   c^{(2)}(0,\l) & v_0^{(2)}(0,\l)\end{pmatrix}= \begin{pmatrix} I_{H^n} & -m(\l)\cr 0 & I_{H^n} \end{pmatrix}, \quad \l\in\bC_+ \cup \bC_-.
\end{equation}
Hence $0\in\rho (\wt Y_v (0,\l))$ and, consequently,  $0\in\rho (\wt Y_v (x,\l))$ for all $x\in\D$. Next the direct calculation with taking the relation $m^*(\ov\l)=m(\l)$ into account   gives $\wt Y_v^*(0,\ov\l)J_{H^n} \wt Y_v(0,\l)=J_{H^n}$, where $J_{H^n}$ is the operator \eqref{1.0.0}.
Moreover the Lagrange's identity \eqref{1.28} implies
\begin{equation*}
\wt Y_v^*(x,\ov\l)J_{H^n} \wt Y_v(x,\l)=\wt Y_v^*(0,\ov\l)J_{H^n} \wt Y_v(0,\l)\,(=J_{H^n}), \qquad x\in\D.
\end{equation*}
Therefore in view of the invertibility of $\wt Y_v (x,\l)$  one has
$\wt Y_v(x,\l)J_{H^n} \wt Y_v^*(x,\ov\l)=J_{H^n}$,
which is equivalent to the relations
\begin{align}
c^{(1)}(x,\l)v_0^{(1)*}(x,\ov\l)- v_0^{(1)}(x,\l)c^{(1)*}(x,\ov\l)=0,\qquad\qquad \qquad\qquad \label{2.36}\\
c^{(2)}(x,\l)v_0^{(2)*}(x,\ov\l)- v_0^{(2)}(x,\l)c^{(2)*}(x,\ov\l)=0, \qquad\qquad\qquad\qquad  \label{2.37}\\
c^{(2)}(x,\l)v_0^{(1)*}(x,\ov\l)- v_0^{(2)}(x,\l)c^{(1)*}(x,\ov\l)=-I_{H^n},\quad x\in\D, \;\;\l\in \bC_+\cup \bC_- \label{2.38}
\end{align}
It follows from \eqref{2.36}, \eqref{2.38} that
\begin{align*}
c^{(1)}(x,\l)v_0^{*}(x,\ov\l)- v_0^{(1)}(x,\l)c^*(x,\ov\l)=0,\qquad\qquad \qquad\qquad\qquad\qquad\\
c^{(2)}(x,\l)v_0^{*}(x,\ov\l)- v_0^{(2)}(x,\l)c^*(x,\ov\l)=(-I_H \;\; 0\;\;\dots\;\;0 )^\top, \quad x\in\D, \;\;\l\in \bC_+\cup \bC_-.
\end{align*}
Moreover the equalities \eqref{2.30}, \eqref{2.32} give
\begin{equation*}
C'(x)= (v_0^{*}(x,\ov\l)\;\; -c^*(x,\ov\l))^\top f(x)\;\;(\text{mod}\;\;\mu).
\end{equation*}
Hence nearly everywhere on $\D$ one has
\begin{equation*}
\wt Y_v(x,\l)C'(x)=\begin{pmatrix} c^{(1)}(x,\l) & v_0^{(1)}(x,\l) \cr   c^{(2)}(x,\l) & v_0^{(2)}(x,\l)\end{pmatrix}\begin{pmatrix}v_0^{*}(x,\ov\l)\cr -c^*(x,\ov\l)  \end{pmatrix} f(x)=\{\underbrace{0,\dots, 0,}_n  -f(x),\underbrace {0,\dots, 0}_{n-1}\},
\end{equation*}
which coincides with \eqref{1.26}. Therefore according to Lemma \ref{lem1.6} the function \eqref{2.29} belongs to $\cD (l)$ and obeys the relations
\begin{equation}\label{2.41}
l[y]-\l y=f, \qquad \wt y(x,\l)=\wt Y_v(x,\l)C(x).
\end{equation}
Since $f\in\gH_b$, it follows from \eqref{2.30} that $C_1(x)\equiv 0$ and $C_2(x)\equiv C_2\,(\in H^n)$ on some interval $(\eta,b)\subset\D$. Hence by \eqref{2.29} $y=v_0(x,\l)C_2,\; x\in (\eta,b)$, which yields the inclusion $y\in\cD$. Moreover  according to \cite{Mog} (see proof of Lemma 3.4) $\G'_0 y=\G'_0 (v_0(x,\l)C_2)$ and  by \eqref{1.35}  $\G'_0 y=0$. Finally combining the second equality in \eqref{2.41} with \eqref{2.32a} and \eqref{2.32}, we obtain $y^{(2)}(0,\l)=C_2(0)=0$.

Thus the function \eqref{2.28} satisfies the boundary conditions in the right hand part of \eqref{1.33} and, consequently, it belongs to $\cD (A_0)$.

\emph{Step2.} Next by using formula for resolvents \eqref{1.17} we prove \eqref{2.24} for all $f=f(t)\in\gH_b$ and $\l\in\rho (\wt A)\cap \bC_+$. Applying Lemma \ref{lem1.8}, 3) to the  solution $Z_-(\cd,\ov\l)$  and taking \eqref{1.41} into account one obtains
\begin{equation*}
\g_-^*(\ov\l)f=\smallint\limits_0^b Z_-^*(t,\ov\l)f(t)\, dt, \qquad \l\in\bC_+, \;\;\; f\in\gH_b.
\end{equation*}
This and \eqref{1.40} yield
\begin{align*}
\bigl ( \g_+(\l)(C_0+C_1M_+(\l))^{-1}C_1 \g_-^*(\ov\l)f \bigr )(x)=\qquad\qquad\qquad\qquad\\
\smallint\limits_0^b Z_+(x,\l)(C_0+C_1M_+(\l))^{-1}C_1Z_-^*(t,\ov\l)f(t)\, dt=
\smallint\limits_0^b G_1(x,t,\l)f(t)\, dt,
\end{align*}
where
\begin{equation}\label{2.42}
G_1(x,t,\l)=Z_+(x,\l)(C_0+C_1M_+(\l))^{-1}C_1Z_-^*(t,\ov\l),\quad \l\in\rho (\wt A)\cap \bC_+.
\end{equation}
Moreover the resolvent $(A_0-\l)^{-1}$ is defined by \eqref{2.26}. Hence formula \eqref{1.17} for the decomposing $D$-triplet $\Pi$ takes the form
\begin{equation}\label{2.43}
((\wt A-\l)^{-1}f)(x)=\smallint\limits_0^b  G(x,t,\l)f(t)\, dt,\;\;\; f=f(\cd)\in\gH_b, \;\;\; \l\in\rho (\wt A)\cap \bC_+,
\end{equation}
where $ G(x,t,\l)=G_0(x,t,\l)-G_1(x,t,\l), \; \l\in\rho (\wt A)\cap\bC_+$. Now it remains to show that $G(x,t,\l)$ coincides with the Green function $G_\t (x,t,\l)$ (see \eqref{2.22}).

Denote by $G_+(x,t,\l)$ and $G_-(x,t,\l)$ restrictions of $G(x,t,\l)$ onto the sets $\{(x,t): x>t\}$ and $\{(x,t): x<t\}$ respectively. It follows from \eqref{2.27} and \eqref{2.42} that
\begin{align}
G_+(x,t,\l)=-v_0(x,\l)c^*(t,\ov\l)-Z_+(x,\l)(C_0+C_1M_+(\l))^{-1}
C_1Z_-^*(t,\ov\l),
\label{2.44}\\
G_-^*(x,t,\l)=-v_0(t,\ov\l)c^*(x,\l)-Z_-(t,\ov\l)C_1^*(C_0^*
+M_-(\ov\l)C_1^*)^{-1}
Z_+^*(x,\l).\label{2.45}
\end{align}
Next assume that $\t^\times=\{(C_{0\times},C_{1\times});\cK_\times\} \in\CA$ is the $\times$-adjoint operator pair  \eqref{2.4}  and let $\t^*=\{(C_{1*},C_{0*}); \cK_\tm\}\in \CB$ be the adjoint pair with $C_{1*}$ and $C_{0*}$ given by  \eqref{1.5}. It follows from \eqref{2.4} that the block-matrix representations
\begin{equation}\label{2.46}
C_{1*}=(\hat C_{2\times}\;\; C_{1*}'):H^n\oplus\cH_1'\to\cK_\times, \qquad  C_{0*}=(-\hat C_{1\times}\;\; C_{0*}'):H^n\oplus\cH_0'\to\cK_\times.
\end{equation}
are valid with $C_{1*}'=C_{0\tm}'\up \cH_1'$ and $C_{0*}'=C_{1\tm}'P_1'-iC_{0\tm}'P_2'$. Now by using Lemma \ref{lem0.1} we can rewrite \eqref{2.45} as
\begin{equation}\label{2.47}
G_-^*(x,t,\l)=-v_0(t,\ov\l)c^*(x,\l)-Z_-(t,\ov\l)(C_{1*}+ C_{0*} M_-(\ov\l))^{-1}C_{0*} Z_+^*(x,\l).
\end{equation}
Let
\begin{equation}\label{2.48}
Y_1(t,\ov\l):=(-c(t,\ov\l)\;\;0):H^n\oplus\cH_0'\to H, \;\; Y_2(t,\l):=(-c(t,\l)\;\;0):H^n\oplus\cH_1'\to H
\end{equation}
and let $Y_-(\cd, \ov\l):\D\to [\cH_0,H], \; Y_+(\cd, \l):\D\to [\cH_1,H]$ be operator solutions given by
\begin{align}
Y_-(t,\ov\l):=Y_1(t,\ov\l) - Z_-(t,\ov\l)  C_1^*(C_0^*+M_-(\ov\l)C_1^*) ^{-1}\quad\label{2.49}\\
Y_+(t,\l):=Y_2(t,\l) - Z_+(t,\l)  C_{0*}^*(C_{1*}^*+M_+(\l) C_{0*}^*)^{-1} \label{2.50}
\end{align}
Combining \eqref{2.48} with \eqref{1.38} and \eqref{1.39} one obtains
\begin{equation*}
-v_0(x,\l)c^*(t,\ov\l)=Z_+(x,\l)Y_1^*(t,\ov\l), \qquad -v_0(t,\ov\l)c^*(x,\l)=Z_-(t,\ov\l)Y_2^*(x,\l).
\end{equation*}
Hence the equalities \eqref{2.44} and  \eqref{2.47} can be represented as \begin{equation}\label{2.51}
G_+(x,t,\l)=Z_+(x,\l)Y_-^*(t,\ov\l), \qquad G_-^*(x,t,\l)=Z_-(t,\ov\l) Y_+^*(x,\l)
\end{equation}
It follows from  \eqref{1.40} and  \eqref{1.41} that the corresponding operators  \eqref{2.11} and  \eqref{2.19} for  $Z_+(\cd,\l)$ and $Z_-(\cd,\ov\l)$ are
\begin{equation*}
T=(C_0\G_0+C_1\G_1)\g_+(\l), \qquad T_\tm=(C_{0\tm}\G_0+ C_{1\tm}\G_1)\g_-(\ov\l).
\end{equation*}
Therefore by Lemma \ref{lem1.2} one has
\begin{equation}\label{2.52}
T=C_0+C_1M_+(\l), \qquad T_\tm= C_{1*}+C_{0*}M_-(\ov\l).
\end{equation}
Next by  \eqref{2.49} $Y_-(\cd, \ov\l)$ is the operator solution if the equation $l[y]-\ov\l y=0$ and
\begin{equation*}
\wt Y_-(0,\ov\l)=\wt Y_1(0,\ov\l)-\wt Z_-(0,\ov\l) C_1^* (C_0^*+M_-(\ov\l)C_1^*)^{-1}=X(C_0^*+M_-(\ov\l)C_1^*)^{-1},
\end{equation*}
where
\begin{align*}
X=\wt Y_1(0,\ov\l)(C_0^*+M_-(\ov\l)C_1^*)- \wt Z_-(0,\ov\l)C_1^*=\qquad\qquad\qquad\qquad\qquad\qquad\\
\begin{pmatrix}-I&0\cr0&0 \end{pmatrix}
\left[ \begin{pmatrix} \hat C_2^* \cr {C'_0}^* \end{pmatrix}
+\begin{pmatrix} m(\ov\l) & M_{2-}(\ov\l) \cr  M_{3-}(\ov\l) & M_{4-}(\ov\l) \end{pmatrix} \begin{pmatrix} -\hat C_1^* \cr {C'_1}^* \end{pmatrix}\right] - \begin{pmatrix} -m(\ov \l) & -M_{2-}(\ov\l) \cr  I_{H^n} & 0 \end{pmatrix} \begin{pmatrix} -\hat C_1^* \cr {C'_1}^* \end{pmatrix}=\\
\begin{pmatrix} -\hat C_2^* +m(\ov\l)\hat C_1^* -M_{2-}(\ov\l){C'_1}^*  \cr 0 \end{pmatrix}-\begin{pmatrix} m(\ov\l)\hat C_1^* -M_{2-}(\ov\l){C'_1}^*  \cr -\hat C_1^* \end{pmatrix}=\begin{pmatrix} -\hat C_2^* \cr  \hat C_1^*\end{pmatrix}\qquad\qquad
\end{align*}
(here we made use of the block-matrix representations \eqref{2.3}, \eqref{1.32b} and \eqref{1.37b}). This implies that the solution $Y_-(\cd, \ov \l)$ satisfies the initial data
\begin{equation}\label{2.53}
\wt Y_-(0,\ov\l)=(-\hat C_2^*\;\;\;\hat C_1^*)^\top\,  (C_0^*+M_-(\ov\l)C_1^*)^{-1}=(-\hat C_2^*\;\;\;\hat C_1^*)^\top\, T^{-1*}
\end{equation}
Similar calculations for $Y_+(t,\l)$ (with taking \eqref{2.46} into account) gives
\begin{equation}\label{2.55}
\wt Y_+(0,\l)=(-\hat C_{2\tm}^*\;\;\hat C_{1\tm}^*)^\top \, (C_{1*}^*+M_+(\l)C_{0*}^*)^{-1}= (-\hat C_{2\tm}^*\;\;\hat C_{1\tm}^*)^\top \, T_\tm^{-1*}.
\end{equation}
Comparing \eqref{2.53} and \eqref{2.55} with \eqref{2.18} and \eqref{2.20}, one obtains $Y_-(t,\ov\l)=Y_\t(t,\ov\l)$ and $Y_+(t,\l)=Y_{\t^\tm}(t,\l)$. Now the  equality $G(x,t,\l)=G_\t (x,t,\l)$ is implied by \eqref{2.51}.

\emph{Step 3.} To complete the proof it is necessary to extend the above result to all $f\in\gH$ and $\l\in\rho (\wt A)$.

If $\l\in \rho (\wt A)\cap\bC_-$, then $\ov\l\in \rho (\wt A^*)\cap\bC_+$ and, consequently,
\begin{equation}\label{2.57}
((\wt A^*-\ov\l)^{-1}f)(x)=\smallint\limits_0^b G_{\t^\times}(x,t,\ov\l)f(t)\, dt, \qquad f=f(\cd)\in\gH_b.
\end{equation}
Since $(\wt A-\l)^{-1}=((\wt A^*-\ov\l)^{-1})^*$, it follows from \eqref{2.57} that  $(\wt A-\l)^{-1}|\gH_b$ is the integral operator with the kernel $G'(x,t,\l)=(G_{\t^\times}(t,x,\ov\l))^*=G_\t(x,t,\l)$
(here the second equality is immediate from \eqref{2.22}). Therefore \eqref{2.24} holds for all $f\in\gH_b$ and $\l\in\rho (\wt A)\cap\bC_-$.

Next assume that  $\l_0\in\rho (\wt A)\cap\bR$. Then there exists a disk $U(\l_0)=\{\l\in\bC : |\l\-\l_0|<\varepsilon \} \; (\varepsilon > 0 )$ such that $U(\l_0)\subset \rho (\wt A)$. Let $\cZ(\cd)$ be an operator function \eqref{29.5} and let $\cZ(\cd, \l)\; (\l\in\rho (\wt A))$ be a family of fundamental solutions \eqref{29.6} (see the proof of Theorem \ref{th1.9a}). Since by Proposition 3.9 in \cite{Mog06.2} $\G_j\in [\cD_+, \cH_j], \; j\in\{0,1\}$, the operator functions
\begin{equation*}
T(\l)=(C_0\G_0+C_1\G_1) \cZ(\l), \quad  T_\tm(\l)=(C_{0\tm}\G_0+C_{1\tm} \G_1) \cZ(\l), \quad \l\in U(\l_0)
\end{equation*}
are holomorphic on $U(\l_0)$. Let now $\cY (\cd,\l):\D\to [\cK',H]$ and $\cY_\tm(\cd,\l):\D\to [\cK',H]$ be operator solutions of \eqref{1.23} with
\begin{equation*}
\wt\cY (0,\l)=(-\hat C_2^*\;\;\;\hat C_1^*)^\top\, (T(\ov\l))^{-1*}, \quad \wt\cY_\tm (0,\l)=(-\hat C_{2\tm}^*\;\;\;\hat C_{1\tm}^*)^\top\, (T_\tm(\ov\l))^{-1*}, \quad \l\in U(\l_0)
\end{equation*}
Then for every  $\l\in U(\l_0)$ the corresponding Green function  \eqref{2.22} can be written as
\begin{equation}\label{2.58}
G_\t (x,t,\l)=\begin{cases} \cZ(x,\l)\, \cY^*(t,\ov\l), \;\; x>t \cr \cY_\tm(x,\l)\, \cZ^* (t,\ov\l), \;\; x<t \end{cases}.
\end{equation}
Since all initial data $\wt\cZ (0,\l),\wt\cY (0,\l)$ and $\wt\cY_\tm (0,\l)$ are continuous (and even holomorphic) at the point $\l_0$, it follows from \eqref{2.58} that $\lim\limits_{\l\to\l_0}\sup\limits_ {(x,t)\in R}||G_\t(x,t,\l)- G_\t(x,t,\l_0)||=0$ for each closed rectangle $R\subset\D\times\D$. Therefore for every $f=f(\cd)\in\gH_b$ we can   pass to the limit in the equality \eqref{2.24} as $\bC_+\ni\l\to\l_0$, which gives the same equality for  $\l=\l_0$. Hence \eqref{2.24} holds for all $\l\in\rho (\wt A)$ and $f\in\gH_b$.

Finally \eqref{29.2} implies existence of the limit $\lim_{\eta \uparrow b}\smallint\limits_0^\eta   G_\t (x,t,\l)f(t)\, dt$ for all $f(\cd)\in\gH$ , which completely  proves \eqref{2.24}.
\end{proof}
\section{Generalized resolvents and characteristic matrices of differential operators}
\subsection{Generalized resolvents and characteristic matrices}
In this section by using the concept of a decomposing $D$-triplet we complement the known results on generalized resolvents and characteristic matrices of differential operators \cite{Sht57,Bru74}.

Assume that $\Pi=\bta$ is a decomposing $D$-triplet \eqref{1.30} for $L$ and $\g_\pm(\cd),\; M_\pm(\cd)$ are the corresponding $\g$-fields
and Weyl functions (see \eqref{1.13}--\eqref{1.15}). Moreover let $\pair\in\RH$ be a collection of two holomorphic operator pairs
\begin{equation}\label{3.0}
\tau_+(\l)=\{(C_0(\l),C_1(\l));\cK_+\}, \;\l\in\bC_+; \;\; \tau_-(\l)=\{(D_0(\l),D_1(\l));\cK_-\}, \;\l\in\bC_-
\end{equation}
with the block-matrix representations
\begin{align}
C_0(\l)=(\hat C_{2}(\l)\;\;C'_{0}(\l)): H^n\oplus\cH_0'\to\cK_+, \qquad \qquad\qquad \qquad\qquad \qquad\qquad  \label{3.1}\\
\qquad \qquad\qquad \qquad \qquad \qquad\qquad \quad C_1(\l)=(\hat C_{1}(\l)\;\;C'_1(\l)):H^n\oplus \cH_1'\to\cK_+ , \quad \l\in\bC_+\nonumber\\
\qquad D_0(\l)=(\hat D_{2}(\l)\;\;D'_{0}(\l)):H^n\oplus\cH_0'\to\cK_-,\qquad \qquad\qquad \qquad\qquad \qquad\qquad\label{3.2}\\
\qquad \qquad\qquad \qquad \qquad \qquad\qquad \qquad D_1(\l)=(\hat D_{1}(\l)\;\;D'_1(\l)):H^n\oplus \cH_1'\to\cK_-, \quad \l\in\bC_-\nonumber
\end{align}
Consider the operator functions $\wt T_+(\l), \; \l\in\bC_+ $ and   $\wt T_-(\l),\; \l\in\bC_-$ given by
\begin{equation}\label{3.3}
\wt T_+(\l)=(C_0(\l)\G_0 - C_1(\l)\G_1)\g_+(\l), \quad \wt T_-(\l)=(D_0(\l)\G_0 - D_1(\l)\G_1)\g_-(\l).
\end{equation}
Moreover introduce the  operator pair $\wt \tau_-(\l)=\{(\wt D_1(\l), \wt D_0(\l));\cK_-\}, \;\l\in\bC_-$ via
\begin{align}
\wt D_1(\l)=D_0(\l)\up \cH_1=(\hat D_2(\l)\;\; D_0'(\l)\up\cH_1'):H^n\oplus \cH_1'\to \cK_-,\qquad\qquad \qquad\label{3.4}\\
\wt D_0(\l)=D_1(\l)P_1+iD_0(\l)P_2=(\hat D_1(\l)\;\; D_1'(\l)P_1'+iD_0'(\l)P_2'):H^n\oplus \cH_0'\to \cK_-.\label{3.5}
\end{align}
Since $(-\tau_+(\ov\l))^\tm=-\tau_-(\l)=\{(D_0(\l),-D_1(\l));\cK_-\}$, it follows from \eqref{1.5} that
\begin{equation}\label{3.6}
(-\tau_+(\ov\l))^*=\{(\wt D_1(\l),-\wt D_0(\l));\cK_-\}=-\wt \tau_-(\l), \quad \l\in\bC_-
\end{equation}
This and Lemma \ref{lem1.2} yield
\begin{equation}\label{3.7}
\wt T_+(\l)=C_0(\l)-C_1(\l)M_+(\l), \;\;\l\in\bC_+; \;\;\; \wt T_-(\l)=\wt D_1(\l)-\wt D_0(\l)M_-(\l), \;\;\l\in\bC_-
\end{equation}
The following theorem, which is immediate from Theorem \ref{th1.4}, contains the description of all generalized resolvents of the minimal operator $L_0$ in terms of boundary conditions.
\begin{theorem}\label{th3.1}
Let  $\Pi=\bta$ be a decomposing $D$-triplet \eqref{1.30} for $L$. Then for every collection $\pair\in\RH$, given by \eqref{3.0} -- \eqref{3.2}, the equality
\begin{multline}\label{3.8}
\cD (\wt A(\l))=\\
\qquad\;\; =\begin{cases}\{y\in\cD: \,\hat C_1(\l) y^{(1)}(0)+ \hat C_2(\l) y^{(2)}(0)+C'_0(\l)\G'_0 y-C'_1(\l)\G'_1 y=0\},\;\; \;\l\in\bC_+ \cr \{y\in\cD: \,\hat D_1(\l) y^{(1)}(0)+ \hat D_2(\l) y^{(2)}(0)+D'_0(\l)\G'_0 y-D'_1(\l)\G'_1 y=0\},\;\; \;\l\in\bC_- \end{cases}
\end{multline}
defines a family of proper extensions $\wt A(\l)=L\up \cD (\wt A(\l))$ such that the operator function
\begin{equation}\label{3.9}
\R=\bR_\tau (\l):= (\wt A(\l)-\l)^{-1},\quad \l\in\bC_+\cup \bC_-
\end{equation}
is a generalized resolvent of $L_0$. Conversely for every generalized resolvent $\R$ of the operator $L_0$ there exists the unique collection $\pair\in\RH$ such that \eqref{3.8} and \eqref{3.9} hold. Moreover $\R$ is a canonical resolvent if and only if $\tau\in\RZ$.
\end{theorem}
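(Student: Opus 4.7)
The plan is to deduce Theorem~\ref{th3.1} by directly specializing Theorem~\ref{th1.4} to the decomposing $D$-triplet $\Pi=\bta$ for $L$ given by \eqref{1.30}. Since a decomposing $D$-triplet is a $D$-triplet for $A^*=L$ with $A=L_0$ in the sense of Definition~\ref{def1.1} (this is Lemma~3.4 of \cite{Mog} quoted in the paper), Theorem~\ref{th1.4} is applicable verbatim. It provides the bijective correspondence $\tau\longleftrightarrow \R$ between collections $\pair\in\RH$ represented by \eqref{1.7} and generalized resolvents of $L_0$, via the abstract boundary conditions \eqref{1.18} and the Shtraus-type formula \eqref{1.19}. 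Moreover, the final statement in Theorem~\ref{th1.4} asserts that $\R$ is a canonical resolvent iff $\tau\in\RZ$.

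The remaining work is to translate the abstract boundary conditions \eqref{1.18} into the explicit form \eqref{3.8}. First I will plug the block decompositions \eqref{1.30} of $\G_0$ and $\G_1$ together with the block representations \eqref{3.1} of $C_0(\l),C_1(\l)$ into the upper-half-plane condition $C_0(\l)\G_0 y-C_1(\l)\G_1 y=0$. A straightforward computation yields
\begin{equation*}
C_0(\l)\G_0 y-C_1(\l)\G_1 y=\hat C_2(\l)y^{(2)}(0)+C_0'(\l)\G_0'y+\hat C_1(\l)y^{(1)}(0)-C_1'(\l)\G_1'y,
\end{equation*}
which is precisely the boundary condition displayed in the first line of \eqref{3.8}. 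An identical substitution with \eqref{3.2} in place of \eqref{3.1} produces the second line of \eqref{3.8} for $\l\in\bC_-$. Hence the families $\{\wt A(\l)\}$ described by \eqref{1.18} and \eqref{3.8} coincide, and \eqref{3.9} becomes the generalized-resolvent equality \eqref{1.19}.

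It remains to observe that the correspondence $\tau\mapsto\R$ set up in this way is bijective and that $\R$ is canonical iff $\tau\in\RZ$. Both assertions are immediate restatements of the corresponding assertions of Theorem~\ref{th1.4}, once one notes that the block representations \eqref{3.1}, \eqref{3.2} exhaust, up to equivalence, all holomorphic operator pairs $(C_0(\l)\;C_1(\l)):\cH_0\oplus\cH_1\to\cK_+$ and $(D_0(\l)\;D_1(\l)):\cH_0\oplus\cH_1\to\cK_-$ in the sense of Section~1.2 (here $\cH_j=H^n\oplus\cH_j'$ as specified by the $D$-triplet), so that the class $\RH$ is unchanged under the block decomposition.

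In short, no genuinely new argument is required: the theorem is the decomposing-triplet image of Theorem~\ref{th1.4}. The only nontrivial point to watch is bookkeeping of signs and of the subscripts $j\in\{0,1\}$ in the two block representations, because $\G_1$ in \eqref{1.30} carries a minus sign in front of $y^{(1)}(0)$, which flips the sign of the $\hat C_1(\l)y^{(1)}(0)$ term and produces the $+\hat C_1(\l)y^{(1)}(0)$ actually appearing in \eqref{3.8}. This is the only place where one could make a sign error; everything else is a routine substitution.
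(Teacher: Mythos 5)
Your proposal is correct and follows exactly the route the paper takes: the paper states that Theorem~\ref{th3.1} is immediate from Theorem~\ref{th1.4}, and your substitution of the block representations \eqref{1.30}, \eqref{3.1}, \eqref{3.2} into the abstract boundary conditions \eqref{1.18} (including the sign flip coming from $\G_1 y=\{-y^{(1)}(0),\G_1'y\}$) is precisely the bookkeeping that justifies that claim. Nothing is missing.
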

By using Theorem \ref{th2.8} one can represent the above result in a rather different form. Namely, let $Z_\pm (\cd,\l)$ be fundamental solutions \eqref{1.38}, \eqref{1.39} and let $T_+=T_+(\l)\;\; (T_-=T_-(\l))$ be the operator  \eqref{2.11}, corresponding to the operator pair $-\tau_+(\l)=$ $\{(C_0(\l),$ $-C_1(\l));\cK_+\}$ (respectively $-\tau_-(\l)= \{(D_0(\l),-D_1(\l)) ; \cK_-\}$ for a fixed $\l\in\bC_+\; (\l\in\bC_-)$. Then in view of \eqref{1.40} and \eqref{1.41} $T_\pm(\l)=\wt T_\pm(\l)$, where $\wt T_\pm(\l)$ are defined by \eqref{3.3}.

Next assume that $\wt A(\l)$ is a family of extensions \eqref{3.8}. Then $\l\in\rho (\wt A(\l))$ and according to Theorem \ref{th2.3} $0\in\rho (\wt T_\pm(\l)),\;\l\in\bC_\pm$. This and \eqref{3.7} make it possible to associate with every collection $\pair\in\RH$, given by \eqref{3.0} -- \eqref{3.2}, the families of operator solutions $Y_{\tau +}(\cd,\l):\D \to [\cH_1,H], \;\l\in\bC_+$ and $Y_{\tau -}(\cd,\l):\D \to [\cH_0,H], \;\l\in\bC_-$ of the equation \eqref{1.23} defined by the initial data
\begin{align}
\wt Y_{\tau +}(0,\l)=(-\hat D_2^*(\ov\l)\;\;\; \hat D_1^*(\ov\l))^\top \, (\wt D_1^*(\ov\l)-M_+(\l)\wt D_0^*(\ov\l))^{-1}, \quad \l\in\bC_+ \label{3.10}\\
\wt Y_{\tau -}(0,\l)=(-\hat C_2^*(\ov\l)\;\;\; \hat C_1^*(\ov\l))^\top \, (C_0^*(\ov\l)-M_-(\l) C_1^*(\ov\l))^{-1}, \quad \l\in\bC_-.\label{3.11}
\end{align}
Moreover introduce the operator functions
\begin{equation}\label{3.12}
Y_\tau (t,\l)=\begin{cases} Y_{\tau +}(t,\l), \;\l\in\bC_+ \cr  Y_{\tau -}(t,\l), \;\l\in\bC_- \end{cases}; \quad  Z_0(t,\l)=\begin{cases} Z_+(t,\l), \;\l\in\bC_+ \cr  Z_ -(t,\l), \;\l\in\bC_-. \end{cases}
\end{equation}
One can easily verify that $Y_\tau(\cd,\l)$ is uniquely defined by $\tau$ and does not depend on (equivalent) representations \eqref{3.0}.
\begin{definition}\label{def3.2}
The operator function $G_\tau (\cd,\cd,\l):\D\tm\D\to [H]$, given by
\begin{equation}\label{3.13}
G_\tau (x,t,\l)=\begin{cases} Z_0(x,\l)\, Y_\tau^*(t,\ov\l), \;\; x>t \cr Y_{\tau}(x,\l)\, Z_0^* (t,\ov\l), \;\; x<t \end{cases}, \quad \l\in\bC_+\cup\bC_-
\end{equation}
will be called the Green function, corresponding to a collection $\pair\in\RH$.
\end{definition}
Now combining statements of Theorems \ref{th2.8} and \ref{th3.1} we arrive at the following theorem.
\begin{theorem}\label{th3.3}
Let under conditions of Theorem \ref{th3.1} $\tau\in\RH$ and let $\R=\bR_\tau(\l)$ be the corresponding generalized resolvent given by \eqref{3.8} and \eqref{3.9}. Then
\begin{equation}\label{3.14}
(\R f)(x)=\smallint\limits_0^b  G_\tau (x,t,\l)f(t)\, dt:=\lim_{\eta \uparrow b}\smallint\limits_0^\eta   G_\tau (x,t,\l)f(t)\, dt,\;\; f=f(\cd)\in\gH,
\end{equation}
which implies that the equalities \eqref{3.13} and \eqref{3.14}  establish a bijective correspondence between all generalized  (canonical) resolvents $\R$ of the minimal operator $L_0$ and all collections $\tau\in\RH$ (respectively, $\tau\in\RZ$).
\end{theorem}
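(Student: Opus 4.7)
The plan is to reduce Theorem \ref{th3.3} to a pointwise (in $\l$) application of Theorem \ref{th2.8}. By Theorem \ref{th3.1}, for each $\l\in\bC_+\cup\bC_-$ the operator $\R(\l)$ equals the canonical resolvent $(\wt A(\l)-\l)^{-1}$ of the concrete proper extension $\wt A(\l)\in\exl$ defined by \eqref{3.8}, and the correspondence $\R\leftrightarrow\tau$ is already known there to be bijective, with the canonical resolvents singled out by $\tau\in\RZ$. What remains is to check that the Green function $G_\t(\cd,\cd,\l)$ produced by Theorem \ref{th2.8} for $\wt A(\l)$ matches, at every fixed $\l$, the function $G_\tau(\cd,\cd,\l)$ of \eqref{3.13}; the bijectivity and the canonical/$\RZ$ identification then come for free.

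First fix $\l\in\bC_+$; the case $\l\in\bC_-$ will be entirely analogous. The boundary condition in \eqref{3.8} at this $\l$ matches the abstract form \eqref{2.5} with the operator pair $\t_\l:=\{(C_0(\l),-C_1(\l));\cK_+\}$, where the sign flip absorbs the ``$-$'' in \eqref{3.8} versus the ``$+$'' in \eqref{2.5}. Choosing as the fundamental solution in Theorem \ref{th2.8} the function $Z_+(\cd,\l)$ from Theorem \ref{th1.10}(3), the operator $T$ of \eqref{2.11} coincides by construction with the object $\wt T_+(\l)$ in \eqref{3.3}, which by Lemma \ref{lem1.2} (equivalently \eqref{3.7}) equals $C_0(\l)-C_1(\l)M_+(\l)$. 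Matching the block decomposition of $C_0(\l),-C_1(\l)$ in \eqref{3.1} against \eqref{2.3} gives $\hat C_2=\hat C_2(\l)$, $\hat C_1=\hat C_1(\l)$; substituting into \eqref{2.18} yields
\begin{equation*}
\wt Y_{\t_\l}(0,\ov\l)= \begin{pmatrix} -\hat C_2^*(\l)\\ \hat C_1^*(\l) \end{pmatrix}(C_0^*(\l)-M_-(\ov\l)C_1^*(\l))^{-1},
\end{equation*}
which is exactly the initial data \eqref{3.11} specifying $Y_{\tau-}(\cd,\ov\l)=Y_\tau(\cd,\ov\l)$.

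The more delicate identification concerns the cross term $Y_{\t_\l^\times}(\cd,\l)$ appearing in \eqref{2.22} for $x<t$. Applying \eqref{3.6} with the roles of $\l$ and $\ov\l$ interchanged, one obtains $\t_\l^\times=-\tau_-(\ov\l)=\{(D_0(\ov\l),-D_1(\ov\l));\cK_-\}$, and by \eqref{1.5} its adjoint has block components $C_{1*}=\wt D_1(\ov\l)$, $C_{0*}=-\wt D_0(\ov\l)$. Using the fundamental solution $Z_-(\cd,\ov\l)$, Lemma \ref{lem1.2} then evaluates $T_\times$ in \eqref{2.19} as $\wt D_1(\ov\l)-\wt D_0(\ov\l)M_-(\ov\l)=\wt T_-(\ov\l)$, so $T_\times^{-1*}=(\wt D_1^*(\ov\l)-M_+(\l)\wt D_0^*(\ov\l))^{-1}$. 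Reading off $\hat C_{2\times}=\hat D_2(\ov\l)$, $\hat C_{1\times}=\hat D_1(\ov\l)$ from \eqref{3.2} and plugging into \eqref{2.20} reproduces exactly the initial data \eqref{3.10} for $Y_{\tau+}(\cd,\l)=Y_\tau(\cd,\l)$. Combining the two identifications, Definition \ref{def2.6} gives $G_{\t_\l}(x,t,\l)=G_\tau(x,t,\l)$, and \eqref{3.14} for $\l\in\bC_+$ follows directly from \eqref{2.24}.

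The case $\l\in\bC_-$ can be handled either by repeating the argument with $\tau_-,Z_-$ in place of $\tau_+,Z_+$ (exploiting the symmetry between $C_j$ and $D_j$ and between \eqref{3.10} and \eqref{3.11}), or more economically by the adjoint identity $(\wt A(\l)-\l)^{-1}=((\wt A(\l))^*-\ov\l)^{-1\,*}$ together with \eqref{2.22}. I expect the main obstacle to lie in the bookkeeping of the third paragraph: reconciling the sign conventions between \eqref{2.3} and \eqref{3.1}--\eqref{3.2}, computing $\t_\l^\times$ via \eqref{3.6} even though that identity is stated only for $\l\in\bC_-$, and tracking the passage from $\t_\l^\times$ to its adjoint $\t_\l^*$ through \eqref{1.5} so that Lemma \ref{lem1.2} yields the expected formula for $T_\times$.
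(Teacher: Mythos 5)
Your proposal is correct and follows essentially the same route as the paper: the paper also obtains Theorem \ref{th3.3} by applying Theorem \ref{th2.8} pointwise in $\l$ to the extension $\wt A(\l)$ associated with the pair $-\tau_+(\l)$ (resp. $-\tau_-(\l)$), identifying $T=\wt T_+(\l)$ via \eqref{1.40} and Lemma \ref{lem1.2}, and arranging the definitions \eqref{3.10}--\eqref{3.13} precisely so that $G_{\t}(\cd,\cd,\l)=G_\tau(\cd,\cd,\l)$. The sign and $\times$-adjoint bookkeeping you carry out explicitly in your third paragraph is exactly what the paper leaves implicit in the text preceding Definition \ref{def3.2}, and your computations there are accurate.
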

Assume that $\Pi=\bta$ is a decomposing $D$-triplet \eqref{1.30} for $L$, $M_\pm(\cd)$ are the corresponding Weyl functions and $\pair\in\RH$ is a collection of holomorphic operator pairs given by \eqref{3.0} or, equivalently, by \eqref{1.6}. Then according to  \cite{Mog06.2} $0\in\rho (\tau_+(\l)+M_+(\l))$, which in view of \cite{MalMog02}, Lemma 2.1 is equivalent to each of the inclusions $0\in \rho (C_0(\l)-C_1(\l)M_+(\l))$ and $0\in\rho (K_1(\l)+M_+(\l)K_0(\l)), \;\l\in\bC_+$. Moreover the same lemma in \cite{MalMog02} yields
\begin{align}
-(\tau_+(\l)+M_+(\l))^{-1}=(C_0(\l)-C_1(\l)M_+(\l))^{-1}C_1(\l)= \label{3.17}\\
=-K_0(\l) (K_1(\l)+M_+(\l)K_0(\l))^{-1},\quad \l\in\bC_+\qquad\nonumber
\end{align}

With every collection $\pair\in\RH$ we associate two holomorphic operator functions $\wt\Om_{\tau +}(\cd):\bC_+\to [\cH_0\oplus\cH_1, \cH_1\oplus\cH_0] $ and $\wt\Om_{\tau -}(\cd):\bC_-\to [\cH_1\oplus\cH_0, \cH_0\oplus\cH_1] $. The first of them is given by the block-matrix representation
\begin{equation}\label{3.18}
\wt\Om_{\tau +}(\l)=\begin{pmatrix}\wt\om_{1+}(\l) & \wt\om_{2+}(\l) \cr  \wt\om_{3+}(\l) & \wt\om_{4+}(\l) \end{pmatrix}:\cH_0\oplus\cH_1\to \cH_1\oplus\cH_0, \quad \l\in\bC_+,
\end{equation}
with entries
\begin{align}
\wt\om_{1+}(\l)=M_+(\l)-M_+(\l)(\tau_+(\l) +M_+(\l))^{-1}M_+(\l)\qquad\qquad\qquad \label{3.18a}\\
\wt\om_{2+}(\l)= -\tfrac 1 2 I_{\cH_1} +M_+(\l) (\tau_+(\l) +M_+(\l))^{-1}\qquad\qquad\qquad\qquad\label{3.18b}\\
\wt\om_{3+}(\l)=-\tfrac 1 2 I_{\cH_0} +(\tau_+(\l) +M_+(\l))^{-1}M_+(\l), \;\;\;\wt\om_{4+}(\l)=-(\tau_+(\l) +M_+(\l))^{-1},\label{3.18c}
\end{align}
while the second one is defined by the relation
\begin{equation}\label{3.19}
\wt\Om_{\tau_-}(\l)=(\wt\Om_{\tau +}(\ov\l))^*,\quad \l\in\bC_-.
\end{equation}
It follows from \eqref{3.17} that
\begin{align}
(\tau_+(\l)+M_+(\l))^{-1}M_+(\l)=-(C_0(\l)-C_1(\l)M_+(\l))^{-1}
C_1(\l)M_+(\l) =\label{3.19a}\\
=I_{\cH_0}-(C_0(\l)-C_1(\l)M_+(\l))^{-1}C_0(\l), \quad \l\in\bC_+.\qquad\quad\nonumber
\end{align}
This and \eqref{3.17} imply that formulas \eqref{3.18a}-- \eqref{3.18c} can be represented as
\begin{align}
\wt\om_{1+}(\l)=M_+(\l)(C_0(\l)-C_1(\l)M_+(\l))^{-1} C_0(\l) \qquad \label{3.20a}\\
\wt\om_{2+}(\l)=-\tfrac 1 2 I_{\cH_1} -M_+(\l) (C_0(\l)-C_1(\l)M_+(\l))^{-1} C_1(\l)\label{3.20b}\\
\wt\om_{3+}(\l)=\tfrac 1 2 I_{\cH_0}-(C_0(\l)-C_1(\l)M_+(\l))^{-1} C_0(\l),\quad\label{3.20c}\\
\wt\om_{4+}(\l)=(C_0(\l)-C_1(\l)M_+(\l))^{-1} C_1(\l).\qquad\quad\label{3.20d}
\end{align}
Similar arguments for the operator function $\wt\Om_{\tau_-}(\cd)$ with taking \eqref{3.6} into account gives
\begin{equation}\label{3.21}
\wt\Om_{\tau -}(\l)=\begin{pmatrix}\wt\om_{1-}(\l) & \wt\om_{2-}(\l) \cr  \wt\om_{3-}(\l) & \wt\om_{4-}(\l) \end{pmatrix}:\cH_1\oplus\cH_0\to \cH_0\oplus\cH_1, \quad \l\in\bC_-,
\end{equation}
where
\begin{align}
\wt\om_{1-}(\l)=M_-(\l)(\wt D_1(\l)-\wt D_0(\l)M_-(\l))^{-1} \wt D_1(\l) \qquad \label{3.21a}\\
\wt\om_{2-}(\l)=-\tfrac 1 2 I_{\cH_0} -M_-(\l) (\wt D_1(\l)-\wt D_0(\l)M_-(\l))^{-1} \wt D_0(\l)\label{3.21b}\\
\wt\om_{3-}(\l)=\tfrac 1 2 I_{\cH_1}-(\wt D_1(\l)-\wt D_0(\l)M_-(\l))^{-1} \wt D_1(\l),\quad\label{3.21c}\\
\wt\om_{4-}(\l)=(\wt D_1(\l)-\wt D_0(\l)M_-(\l))^{-1} \wt D_0(\l).\qquad\quad\label{3.21d}
\end{align}
and the operator functions $\wt D_1(\cd)$ and $\wt D_0(\cd)$ are defined by \eqref{3.4} and \eqref{3.5} respectively.

Since $H^n\in\cH_0$ and $H^n\in\cH_1$, it follows that $H^n\oplus H^n\subset \cH_0\oplus\cH_1$ and $H^n\oplus H^n\subset \cH_1\oplus\cH_0$. Therefore with every collection $\pair\in\RH$ we can associate the operator function $\Om_\tau (\cd): \bC_+\cup\bC_-\to [H^n\oplus H^n]$ given by
\begin{equation}\label{3.22}
\Om_\tau (\l)=\begin{cases} P_{H^n\oplus H^n}\,\wt \Om_{\tau +}(\l)\up H^n\oplus H^n, \;\; \l\in\bC_+ \\
P_{H^n\oplus H^n}\,\wt \Om_{\tau_-}(\l)\up H^n\oplus H^n, \;\; \l\in\bC_- \end{cases}
\end{equation}
It follows from \eqref{3.19} and \eqref{3.22} that $\Om_\tau(\cd)$ is a holomorphic operator function obeying the relation $\Om_\tau^*(\ov\l)=\Om_\tau(\l),\;\l\in\bC_+\cup\bC_-$.
\begin{theorem}\label{th3.4}
Let $\Pi=\bta$ be a decomposing $D$-triplet \eqref{1.30} for $L$. Then for every collection $\pair\in\RH$ the corresponding Green function \eqref{3.13} admits the representation
\begin{equation}\label{3.23}
G_\tau(x,t,\l)=Y_0(x,\l)(\Om_\tau(\l)+\tfrac 1 2 \, \emph{sgn}(t-x)J_{H^n})Y_0^*(t,\ov\l), \quad \l\in\bC_+\cup\bC_-,
\end{equation}
where $Y_0(\cd,\l)$ is the "canonical" solution \eqref{1.25} and $J_{H^n}$ is the operator \eqref{1.0.0}.
\end{theorem}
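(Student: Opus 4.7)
\medskip

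\noindent\emph{Proof proposal.} The plan is to factor the "canonical" solution $Y_0(t,\l)$ out of every ingredient of $G_\tau(x,t,\l)$, thereby reducing the identity \eqref{3.23} to a purely algebraic one on $H^n\oplus H^n$, and then verify that algebraic identity via formulas \eqref{3.20a}--\eqref{3.20d}.

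First I would use \eqref{1.25a} applied separately to the operator solutions $Z_\pm(\cd,\l)$ and $Y_{\tau\pm}(\cd,\l)$ to write, for every $t\in\D$,
\begin{equation*}
Z_\pm(t,\l)=Y_0(t,\l)\wt Z_\pm(0,\l),\qquad Y_{\tau\pm}(t,\l)=Y_0(t,\l)\wt Y_{\tau\pm}(0,\l),
\end{equation*}
and take adjoints to get $Z_\pm^*(t,\bar\l)=\wt Z_\pm^*(0,\bar\l)Y_0^*(t,\bar\l)$ and similarly for $Y_{\tau\pm}^*(t,\bar\l)$. Substituting into Definition \ref{def3.2} and using \eqref{3.12}, the factors $Y_0(x,\l)$ on the left and $Y_0^*(t,\bar\l)$ on the right pull out of both branches of $G_\tau(x,t,\l)$. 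Since $\wt Y_0(0,\l)=I_{H^n\oplus H^n}$ and $\wt Z_\pm(0,\l)$, $\wt Y_{\tau\pm}(0,\bar\l)$ all have codomain (or after adjoint, domain) equal to $H^n\oplus H^n$, the theorem reduces to proving the two operator identities
\begin{align*}
\wt Z_+(0,\l)\,\wt Y_{\tau-}^*(0,\bar\l)&=\Om_\tau(\l)-\tfrac 1 2 J_{H^n},\qquad \l\in\bC_+,\\
\wt Y_{\tau+}(0,\l)\,\wt Z_-^*(0,\bar\l)&=\Om_\tau(\l)+\tfrac 1 2 J_{H^n},\qquad \l\in\bC_+,
\end{align*}
together with their analogues for $\l\in\bC_-$.

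Next, using the symmetry $M_+^*(\l)=M_-(\bar\l)$ recorded just after \eqref{1.15.0}, I would take the adjoint of \eqref{3.11} (with $\bar\l$ in place of $\l$) to obtain
\begin{equation*}
\wt Y_{\tau-}^*(0,\bar\l)=(C_0(\l)-C_1(\l)M_+(\l))^{-1}(-\hat C_2(\l)\;\;\hat C_1(\l)),
\end{equation*}
and similarly process \eqref{3.10}. Multiplying on the left by $\wt Z_+(0,\l)$ from \eqref{1.37a} gives an explicit formula for the left-hand side of the first identity above. On the other side, I would rewrite the block entries \eqref{3.20a}--\eqref{3.20d} of $\wt\Om_{\tau+}(\l)$ in the factored form
\begin{equation*}
\wt\Om_{\tau+}(\l)-\tfrac 1 2 J_{\cH_0,\cH_1}=\begin{pmatrix}M_+(\l)\\-I_{\cH_0}\end{pmatrix}(C_0(\l)-C_1(\l)M_+(\l))^{-1}(C_0(\l)\;\;-C_1(\l)),
\end{equation*}
which is a direct regrouping. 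Applying $P_{H^n\oplus H^n}(\cd)\up H^n\oplus H^n$: the restriction of the row $(C_0(\l)\;\;-C_1(\l))$ picks out, by \eqref{3.1}, the entries $(\hat C_2(\l)\;\;-\hat C_1(\l))$; the projection of the column $(M_+(\l)\;\;{-I_{\cH_0}})^\top$ picks out, by the block representation \eqref{1.32a} and \eqref{1.37a}, exactly $-\wt Z_+(0,\l)$. Since $P_{H^n\oplus H^n}J_{\cH_0,\cH_1}\up H^n\oplus H^n=J_{H^n}$, this yields
\begin{equation*}
\Om_\tau(\l)-\tfrac 1 2 J_{H^n}=\wt Z_+(0,\l)(C_0(\l)-C_1(\l)M_+(\l))^{-1}(-\hat C_2(\l)\;\;\hat C_1(\l)),
\end{equation*}
which is exactly the formula obtained above for $\wt Z_+(0,\l)\wt Y_{\tau-}^*(0,\bar\l)$.

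The second identity (case $x<t$, $\l\in\bC_+$) is verified in completely parallel fashion using \eqref{3.10} and \eqref{1.37b}, noting that the sign of $\tfrac 1 2 J_{H^n}$ reverses because the roles of $Z$ and $Y_\tau$ are interchanged. The $\l\in\bC_-$ cases then follow from the relation $\wt\Om_{\tau-}(\l)=\wt\Om_{\tau+}^*(\bar\l)$ in \eqref{3.19}, together with the obvious adjoint relations $G_\tau(x,t,\l)=G_{\tau^{*}}(t,x,\bar\l)^*$-type symmetries implicit in \eqref{3.13}. I expect the main obstacle to be Step 3: namely, recognizing that $\wt\Om_{\tau+}(\l)-\tfrac 1 2 J_{\cH_0,\cH_1}$ factors as a product of a column over $\cH_0$, a central inverse, and a row over $\cH_0\oplus\cH_1$, and then carefully matching the block-matrix restrictions/projections so that the $H^n$-components of $C_0(\l),C_1(\l)$ and of the first block row of $M_+(\l)$ line up with the entries of $\wt Z_+(0,\l)$ from \eqref{1.37a}; once this bookkeeping is set up the two sides coincide termwise.
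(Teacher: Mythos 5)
Your proposal is correct and follows essentially the same route as the paper: factor $Y_0$ out of $Z_\pm$ and $Y_{\tau\pm}$ via \eqref{1.25a}, reduce to the algebraic identity $\wt Z_+(0,\l)\wt Y_{\tau-}^*(0,\ov\l)=\Om_\tau(\l)-\tfrac 1 2 J_{H^n}$, and verify it by writing $\wt\Om_{\tau+}(\l)-\tfrac 1 2 J_{\cH_0,\cH_1}$ in the factored form implied by \eqref{3.20a}--\eqref{3.20d} and compressing to $H^n\oplus H^n$. The only cosmetic difference is that the paper handles the $x<t$ branch by taking adjoints (using $\Om_\tau^*(\ov\l)=\Om_\tau(\l)$ and $J_{H^n}^*=-J_{H^n}$) of the identity proved on the opposite half-plane rather than by a second direct computation, which is exactly the sign reversal you describe.
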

\begin{proof}
Let $M_\pm(\cd)$ be Weyl functions \eqref{1.32a}, \eqref{1.32b} and let
\begin{align}
\wt S_+(\l):=(-M_+(\l)\;\;  I_{\cH_0})^\top :\cH_0
\to \cH_1 \oplus \cH_0,
 \;\;\;\l\in\bC_+\label{3.23.1}\\
\wt S_-(z):=(-M_-(z)\;\;  I_{\cH_1})^\top :\cH_1\to \cH_0\oplus\cH_1, \;\;\; z\in\bC_-.\label{3.23.2}
\end{align}
Introduce the  operator functions
$S_+(\l)(\in [\cH_0,H^n\oplus H^n]),\; \l\in\bC_+$ and $S_-(z)(\in
 [\cH_1,H^n\oplus H^n]),\; z\in\bC_-$ by
\begin{align}
S_+(\l)=P_{H^n\oplus H^n}\wt S_+(\l)=\begin{pmatrix} -m(\l)&
-M_{2+}(\l) \cr I_{H^n} & 0 \end{pmatrix}:H^n\oplus\cH_0'\to H^n\oplus H^n \label{3.24.1}\\
S_-(z)=P_{H^n\oplus H^n}\wt S_-(z)=\begin{pmatrix} -m(z)&
-M_{2-}(z) \cr I_{H^n} & 0 \end{pmatrix}:H^n\oplus\cH_1'\to H^n\oplus H^n   \label{3.24.2}
\end{align}
It follows from \eqref{1.37a} and \eqref{1.37b} that
\begin{equation}\label{3.26}
\wt Z_+(0,\l)=S_+(\l), \;\;\l\in\bC_+;\qquad \wt Z_-(0,z)=S_-(z),\;\; z\in\bC_-.
\end{equation}
Next assume that a collection $\pair\in\RH$ is given by \eqref{3.0}--\eqref{3.2} and let $Y_{\tau +}(\cd, \l),\; Y_{\tau -}(\cd, \l)$ be operator solutions of \eqref{1.23} with the initial data \eqref{3.10}, \eqref{3.11}. Moreover consider the operator functions
\begin{align*}
A_+(\l)=(-C_0(\l)\;\;\;C_1(\l)):\cH_0\oplus \cH_1\to \cK_+, \;\;\;\l\in\bC_+\;\;\\
A_-(\l)=(-\wt D_1(\l)\;\;\; \wt D_0(\l)):\cH_1\oplus \cH_0\to \cK_-, \;\;\;\l\in\bC_-,
\end{align*}
where $C_j(\l)$ and $\wt D_j(\l),\; j\in\{0,1\}$ are defined by \eqref{3.0}, \eqref{3.4} and \eqref{3.5}. Then
\begin{align*}
A_+(\l)\up H^n\oplus H^n =(-C_0(\l)\up H^n\;\;\;C_1(\l)\up H^n)=(-\hat C_2(\l)\;\;\hat C_1(\l)):H^n\oplus H^n\to \cK_+,\\
A_-(\l)\up H^n\oplus H^n =(-\wt D_1(\l)\up H^n\;\;\;\wt D_0(\l)\up H^n)=(-\hat D_2(\l)\;\;\hat D_1(\l)):H^n\oplus H^n\to \cK_-
\end{align*}
and the equalities \eqref{3.10} and \eqref{3.11} give
\begin{align*}
\wt Y_{\tau +}^*(0,\ov\l)=(\wt D_1(\l)-\wt D_0(\l)M_-(\l))^{-1}(-\hat D_2(\l)\;\;\;\hat D_1(\l))=\qquad\qquad\qquad\qquad\quad\\
\qquad\qquad\qquad\qquad\quad =(\wt D_1(\l)-\wt D_0(\l)M_-(\l))^{-1}A_-(\l)\up H^n\oplus H^n, \quad \l\in\bC_-\\
\wt Y_{\tau _-}^*(0,\ov\l)=( C_0(\l)- C_1(\l)M_+(\l))^{-1}(-\hat C_2(\l)\;\;\;\hat C_1(\l))=\qquad\qquad\qquad\qquad\quad\\
\qquad\qquad\qquad\qquad\quad =( C_0(\l)- C_1(\l)M_+(\l))^{-1}A_+(\l)\up H^n\oplus H^n, \quad \l\in\bC_+\qquad
\end{align*}
This and \eqref{3.26} yield
\begin{align}
\wt Z_+(0,\l) \wt Y_{\tau _-}^*(0,\ov\l)=P_{H^n\oplus H^n}\wt S_+(\l)( C_0(\l)- C_1(\l)M_+(\l))^{-1}A_+(\l)\up H^n\oplus H^n, \label{3.27}\\
\wt Z_-(0,\l) \wt Y_{\tau _+}^*(0,\ov\l)=P_{H^n\oplus H^n}\wt S_-(\l)(\wt D_1(\l)-\wt D_0(\l)M_-(\l))^{-1}A_-(\l)\up H^n\oplus H^n, \label{3.28}
\end{align}
where
\begin{align*}
\wt S_+(\l)( C_0(\l)- C_1(\l)M_+(\l))^{-1}A_+(\l)=\qquad\qquad\qquad\qquad\qquad\qquad\qquad
\qquad\\
\qquad\qquad\qquad=\begin{pmatrix} -M_+(\l)\cr I_{\cH_0} \end{pmatrix} ( C_0(\l)- C_1(\l)M_+(\l))^{-1}(-C_0(\l)\;\;C_1(\l))=\\
=\begin{pmatrix} M_+(\l)(C_0(\l)-C_1(\l)M_+(\l))^{-1} C_0(\l) & -M_+(\l) (C_0(\l)-C_1(\l)M_+(\l))^{-1} C_1(\l)\cr
-(C_0(\l)-C_1(\l)M_+(\l))^{-1} C_0(\l)&
(C_0(\l)-C_1(\l)M_+(\l))^{-1} C_1(\l)\end{pmatrix} =\\
= \wt\Om_{\tau +}(\l)-\tfrac 1 2 JD _{\cH_0,\cH_1}\qquad\qquad\qquad\qquad\qquad\qquad\qquad\qquad
\end{align*}
(here the last equality is implied by \eqref{3.20a}--\eqref{3.20d} and \eqref{1.0.0}). Hence by \eqref{3.22} and the obvious equality $J_{H^n}= P_{H^n\oplus H^n}J_{\cH_0,\cH_1}\up H^n\oplus H^n$ one obtains
\begin{align}\label{3.29}
\wt Z_+(0,\l) \wt Y_{\tau _-}^*(0,\ov\l)=P_{H^n\oplus H^n}( \wt\Om_{\tau +}(\l)-\tfrac 1 2 J_{\cH_0,\cH_1})\up H^n\oplus H^n=\Om_\tau (\l)-\tfrac 1 2 J_{H^n}
\end{align}
for all $ \l\in\bC_+$. Similarly starting with \eqref{3.28} and using the representation \eqref{3.21}--\eqref{3.21d} of $\wt\Om_{\tau -}(\l)$ one proves
\begin{align}\label{3.30}
\wt Z_-(0,\l) \wt Y_{\tau _+}^*(0,\ov\l)=\Om_\tau (\l)-\tfrac 1 2 J_{H^n}, \quad \l\in\bC_-.
\end{align}
Now \eqref{3.29} and \eqref{3.30} imply that the operator functions \eqref{3.12} obey
\begin{equation}\label{3.31}
\wt Z_0(0,\l) \wt Y_{\tau }^*(0,\ov\l)=\Om_\tau (\l)-\tfrac 1 2 J_{H^n}, \quad \l\in\bC_+\cup\bC_-.
\end{equation}
Next consider the corresponding Green function \eqref{3.13}. It follows from \eqref{1.25a} that $Z_0(x,\l)=Y_0(x,\l)\wt Z_0(0,\l)$ and $Y_\tau (x,\l)=Y_0(x,\l)\wt Y_\tau (0,\l)$. This and \eqref{3.31} give
\begin{align*}
G_\tau (x,t,\l)=Y_0(x,\l)(\wt Z_0(0,\l) \wt Y_\tau^* (0,\ov\l))Y_0^*(t,\ov\l)=\qquad\qquad\qquad\qquad\qquad\qquad\qquad \\
\qquad\qquad\qquad = Y_0(x,\l)(\Om_\tau (\l)-\tfrac 1 2 J_{H^n})Y_0^*(t,\ov\l), \;\;\text{if}\;\; x>t,\\
G_\tau (x,t,\l)=Y_0(x,\l)(\wt Y_\tau (0,\l)\wt Z_0^*(0,\ov\l) )Y_0^*(t,\ov\l)=\qquad\qquad\qquad\qquad \qquad\qquad\qquad\qquad\\
\qquad =Y_0(x,\l)(\Om^*_\tau (\ov\l)-\tfrac 1 2 J^*_{H^n})Y_0^*(t,\ov\l)= Y_0(x,\l)(\Om_\tau (\l)+\tfrac 1 2 J_{H^n})Y_0^*(t,\ov\l), \;\;\text{if}\;\; x<t,
\end{align*}
which is equivalent to \eqref{3.23}.
\end{proof}
Comparing \eqref{3.23} with \eqref{0.4} and taking Theorem \ref{th3.3} into account one concludes that $\Om_\tau (\cd)$ is a characteristic matrix in the sense of Shtraus \cite{Sht57,Bru74}. This allows us to introduce the following definition.
\begin{definition}\label{def3.5}
The operator function $\Om (\cd)=\Om_\tau(\cd):\bC_+\cup\bC_-\to [H^n\oplus H^n]$ defined by the relations \eqref{3.18}--\eqref{3.19} and \eqref{3.22} will be called the characteristic matrix of the operator $L_0$ corresponding to a collection $\pair\in\RH$.

Since by Theorem \ref{th3.1} formulas \eqref{3.8} and \eqref{3.9} give a bijective correspondence between all $\tau\in\RH$ and all generalized resolvents $\R$, the operator function $\Om (\cd)=\Om_\tau(\cd)$  will be also called the characteristic matrix of  $L_0$ corresponding to the generalized resolvents $\R=\bR_\tau (\l) $.

The characteristic matrix $\Om_\tau(\cd)$ will be called canonical if it corresponds to a canonical resolvent $\bR_\tau (\l)$ or, equivalently if $\tau\in \RZ$.
\end{definition}
\begin{remark}\label{rem3.5a}
To compare the representations \eqref{3.13} and \eqref{3.23} of the Green function note that in the case $\dim H=1$ (the scalar case) and $n_+(L_0)=n_-(L_0):=m$ the operator solution $Y_\tau (\cd,\l)$ in \eqref{3.13} contains less components than $Y_0(\cd,\l)$ in \eqref{3.23} (more precisely, $m$ and $2n$ components respectively). Therefore  we suppose that  formula  \eqref{3.13} for the Green function may be useful in the sequel.
\end{remark}
\subsection{Description of characteristic matrices and their properties}
It is clear that formulas \eqref{3.18}--\eqref{3.19} and \eqref{3.22} describe in fact all characteristic matrices of the operator $L_0$ by means of a "boundary" parameter $\tau\in\RH$. The same description in a rather different form is specified in the following theorem.
\begin{theorem}\label{th3.6}
Assume that $\Pi=\bta$ is a decomposing $D$-triplet \eqref{1.30} for $L$, $M_\pm(\cd)$ are the corresponding Weyl functions \eqref{1.32a}, \eqref{1.32b} and $S_\pm(\cd)$ are operator functions \eqref{3.24.1}, \eqref{3.24.2}. Moreover, let $A_0\in\exl$ be the maximal symmetric extension \eqref{1.33} and let $\bR_0(\l)$ be a generalized resolvent of $L_0$ given by
\begin{equation*}
\bR_0(\l)=\begin{cases}(A_0-\l)^{-1},\;\l\in\bC_+\cr  (A_0^*-\l)^{-1},\;\l\in\bC_-\end{cases}.
\end{equation*}
Then: (i) the characteristic matrix of the operator $L_0$, corresponding to the generalized resolvent  $\bR_0(\l)$ is
\begin{equation}\label{3.34}
\Om_0(\l)=\begin{pmatrix} m(\l) & -\tfrac 1 2 I_{H^n}\cr  -\tfrac 1 2 I_{H^n} & 0 \end{pmatrix}: H^n \oplus H^n \to H^n \oplus H^n, \quad \l\in\bC_+\cup\bC_-;
\end{equation}

(ii) the equality
\begin{equation}\label{3.35}
\Om (\l)(=\Om_\tau(\l))=\Om_0(\l)-S_+(\l)(\tau_+(\l)+M_+(\l))^{-1}S_-^*(\ov\l), \quad\l\in\bC_+
\end{equation}
establishes a bijective correspondence between all characteristic matrices $\Om(\l)$ of the operator $L_0$ and all collections $\pair\in\RH$. Moreover the characteristic matrix $\Om(\l)$ in \eqref{3.35} is canonical, if and only if $\pair\in\RZ$.
\end{theorem}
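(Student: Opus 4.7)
The statement splits naturally into three tasks: derive the Krein--Naimark-type identity (3.35), read off $\Om_0(\l)$ in (3.34) as a special case, and establish the bijectivity. The central task is (3.35); once it is in hand, the other two follow with little effort.

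My plan for (3.35) is to rewrite $\wt\Om_{\tau+}(\l)$ from (3.18a)--(3.18c) as a rank factorization around $(\tau_+(\l)+M_+(\l))^{-1}$, and then project via (3.22) onto $H^n\oplus H^n$. The identity to verify is
\[
\wt\Om_{\tau+}(\l)=\wt\Om_{0+}(\l)-\wt S_+(\l)(\tau_+(\l)+M_+(\l))^{-1}\wt S_-^*(\ov\l),\qquad \l\in\bC_+,
\]
where $\wt\Om_{0+}(\l)=\begin{pmatrix} M_+(\l) & -\tfrac12 I_{\cH_1}\\ -\tfrac12 I_{\cH_0} & 0\end{pmatrix}$ and $\wt S_\pm(\l)$ are given by (3.23.1)--(3.23.2). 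Using the Nevanlinna symmetry $M_-^*(\ov\l)=M_+(\l)$ from Theorem \ref{th1.10} one has $\wt S_-^*(\ov\l)=(-M_+(\l)\;\;I_{\cH_1})$, and a direct computation shows that the outer product
\[
\begin{pmatrix}-M_+(\l)\\ I_{\cH_0}\end{pmatrix}(\tau_+(\l)+M_+(\l))^{-1}(-M_+(\l)\;\;I_{\cH_1})
\]
matches $\wt\Om_{0+}(\l)-\wt\Om_{\tau+}(\l)$ entry by entry. Since $S_\pm(\l)=P_{H^n\oplus H^n}\wt S_\pm(\l)$ by (3.24.1)--(3.24.2), the projection acts only on the outer factors and leaves the middle factor $(\tau_+(\l)+M_+(\l))^{-1}$ untouched, which yields (3.35) on $\bC_+$. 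For $\l\in\bC_-$ the identity follows by taking adjoints through (3.19).

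To obtain part (i), I substitute the boundary parameter corresponding to $\bR_0(\l)$: for $\l\in\bC_+$ the domain $\cD(A_0)=\Ker\G_0$ of (1.33) matches the boundary condition of (3.8) with $C_0(\l)=I_{\cH_0}$, $C_1(\l)=0$, so $\tau_{0+}(\l)=\{(I_{\cH_0},0);\cH_0\}$. By (3.17), $(\tau_{0+}(\l)+M_+(\l))^{-1}=-(C_0-C_1M_+)^{-1}C_1=0$, so (3.35) collapses to $\Om_{\tau_0}(\l)=\Om_0(\l)$; the block form (1.32a) of $M_+(\l)$ together with (3.22) then forces $\Om_0(\l)$ to have the shape (3.34). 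The case $\l\in\bC_-$ is handled either by a parallel computation with the $D$-parameters and (3.21a)--(3.21d), or directly by taking adjoints through (3.19) and using $M_+^*(\ov\l)=M_-(\l)$.

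Bijectivity is immediate from the machinery already assembled: Theorem \ref{th3.1} supplies the bijection $\tau\leftrightarrow\bR_\tau$ between $\RH$ and generalized resolvents, while Theorem \ref{th3.4} and (3.23) show that the Shtraus--Bruk kernel uniquely determines $\Om_\tau$ from $\bR_\tau$ and, conversely, that $\Om_\tau$ determines $\bR_\tau$ through the integral formula (3.14). Composing, $\tau\mapsto\Om_\tau$ is bijective from $\RH$ onto the set of all characteristic matrices of $L_0$, and the canonical case $\tau\in\RZ\Leftrightarrow\Om$ canonical is immediate from Theorem \ref{th3.1} together with Definition \ref{def3.5}. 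I expect the main technical obstacle to lie in the rank-factorization computation above: it demands careful bookkeeping of the domains and codomains across the splittings $\cH_j=H^n\oplus\cH_j'$ and across the Nevanlinna symmetry $M_-^*(\ov\l)=M_+(\l)$, but once that identity is in place the projection step and the bijectivity conclusion are routine.
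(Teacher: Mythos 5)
Your proposal is correct and follows essentially the same route as the paper: the identity \eqref{3.35} is obtained exactly as in the text by writing $\wt\Om_{\tau+}(\l)=\wt\Om_{0+}(\l)-\wt S_+(\l)(\tau_+(\l)+M_+(\l))^{-1}\wt S_-^*(\ov\l)$ (using $M_-^*(\ov\l)=M_+(\l)$) and compressing to $H^n\oplus H^n$ via \eqref{3.22}, and part (i) is obtained from the same special parameter $\tau_{0+}(\l)=\{(I_{\cH_0},0);\cH_0\}=\{0\}\oplus\cH_1$ for which $(\tau_{0+}(\l)+M_+(\l))^{-1}=0$. Your bijectivity argument via Theorems \ref{th3.1} and \ref{th3.4} is also the one the paper implicitly relies on, stated slightly more explicitly.
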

\begin{proof}
Let $\Om_0(\cd)$ be the operator function \eqref{3.34}. Then the equalities \eqref{3.22} and \eqref{3.18}--\eqref{3.18c} imply
\begin{align*}
\Om_\tau(\l)=P_{H^n\oplus H^n}\begin{pmatrix} M_+(\l) & -\tfrac 1 2 I_{\cH_1}\cr  -\tfrac 1 2 I_{\cH_0} & 0 \end{pmatrix}\up H^n\oplus H^n -\qquad\qquad\qquad\\
-P_{H^n\oplus H^n} \begin{pmatrix} -M_+(\l) \cr I_{\cH_0}\end{pmatrix}(\tau_+(\l)+M_+(\l))^{-1}(-M_-^*(\ov\l)\;\;\; I_{\cH_1})\up H^n\oplus H^n, \quad \l\in\bC_+,
\end{align*}
where
\begin{align*}
P_{H^n\oplus H^n}\begin{pmatrix} M_+(\l) & -\tfrac 1 2 I_{\cH_1}\cr  -\tfrac 1 2 I_{\cH_0} & 0 \end{pmatrix}\up H^n\oplus H^n=\begin{pmatrix} P_{H^n} M_+(\l)\up H^n & -\tfrac 1 2 I_{H^n} \cr -\tfrac 1 2 I_{H^n} & 0\end{pmatrix}= \Om_0(\l).
\end{align*}
Combining these relations with \eqref{3.23.1}--\eqref{3.24.2} we arrive at \eqref{3.35}.

Next assume that $\tau_0=\{\tau_{0+}, \tau_{0-}\}\in\RH$ is a collection given by
\begin{equation*}
\tau_{0+}(\l)\equiv \t_0:=\{0\}\oplus \cH_1\, (\in\CA),\;\; \l\in\bC_+; \quad \tau_{0-}(\l)\equiv \t_0^\tm,\;\;\l\in\bC_-.
\end{equation*}
Then in view of \eqref{1.11} the corresponding generalized resolvent $\bR_{\tau_0}(\l)$, defined by \eqref{3.8} and \eqref{3.9}, coincides with $\bR_0(\l)$. On the other hand, $(\tau_{0+}(\l)+M_+(\l))^{-1}=0,\; \l\in\bC_+$ and by \eqref{3.35} $\Om_{\tau_0}(\l)=\Om_0(\l)$. This implies the statement (i) of the theorem.
\end{proof}
The following lemma will be useful in the sequel.
\begin{lemma}\label{lem3.7}
Assume that $\gH,\cH$ and  $\cH_0$ are Hilbert spaces, $\cH_1$ is a subspace in $\cH_0$, $\cH_2:=\cH_0\ominus \cH_1$, $\Lambda$ is an open set in $\bC$ and $A(\cd):\Lambda\to [\cH\oplus\cH_0, \cH\oplus\cH_1], \; \wt\g(\cd):\Lambda\to [\cH\oplus\cH_0,\gH]$ are operator functions given by
\begin{align}
A(\l)=\begin{pmatrix} a_1(\l)& a_2(\l)\cr a_3(\l)& a_4(\l)\end{pmatrix}:\cH\oplus\cH_0\to \cH\oplus\cH_1, \quad \l\in\Lambda \label{3.36}\\
\wt\g(\l)=(\wt\g_1(\l)\;\; \wt\g_2(\l)):\cH\oplus\cH_0 \to\gH, \quad \l\in\Lambda\qquad\label{3.37}
\end{align}
and obeying the relation
\begin{equation}\label{3.38}
A(\mu)-A^*(\l)P_{\cH\oplus\cH_1}+i\,P_{\cH_2}=(\mu-\ov\l)\wt\g^*(\l)\wt\g (\mu), \quad \mu,\l\in\Lambda.
\end{equation}
Moreover let $\tau(\l)=\{K_0(\l),K_1(\l);\cK \}\,(\in\CA), \; \l\in\Lambda$ be a $\CA$-valued function (an operator pair, see \eqref{1.1a}) with $0\in\rho (\tau(\l)+a_4(\l))$ and let $\f_\tau(\cd):\Lambda\to [\cH]$ be a transform of $\tau(\cd)$ defined by
\begin{equation}\label{3.39}
\f_\tau(\l)=a_1(\l)-a_2(\l)(\tau(\l)+a_4(\l))^{-1}a_3(\l), \quad \l\in\Lambda.
\end{equation}
Then
\begin{align}
\f_\tau(\mu)-\f_\tau^*(\l)=(\mu-\ov\l)\g_\tau^*(\l)\g_\tau(\mu)+ \b^*(\l)(K_{01}^*(\l)K_1(\mu)- K_1^*(\l)K_{01}(\mu)-\label{3.40}\\
-i K_{02}^*(\l)K_{02}(\mu))\b (\mu), \quad\; \mu,\l\in \Lambda,\qquad\qquad\qquad\nonumber
\end{align}
where $\g_\tau(\l)(\in [\cH,\gH]),\; \b(\l)(\in [\cH,\cK])$ and $K_{0j}(\l) (\in [\cK, \cH_j]),\; j\in\{1,2\}, \; \l\in\Lambda$  are operator functions given by
\begin{align}
\g_\tau(\l)=\wt\g_1(\l)-\wt\g_2(\l)(\tau(\l)+a_4(\l))^{-1}a_3(\l),
\label{3.41}\\
 \b(\l)=(K_1(\l)+a_4(\l)K_0(\l))^{-1}a_3(\l),\quad\label{3.41a}\\
K_0(\l)=(K_{01}(\l)\;\;\, K_{02}(\l))^\top :\cK\to\cH_1\oplus\cH_2.\label{3.42}
\end{align}
\end{lemma}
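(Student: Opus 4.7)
The plan is to prove (3.40) by a direct block-matrix computation: expand both sides in terms of $a_j$, $\wt\gamma_j$, $K_0$, $K_1$, $\b$ and show the two expressions coincide using (3.38) block-wise. The first preparatory step is to rewrite $(\tau(\l) + a_4(\l))^{-1}$ using the operator-pair representation of $\tau(\l)$. Since $\tau(\l) = \{K_0(\l)h, K_1(\l)h : h \in \cK\}$, the relation $\tau(\l) + a_4(\l)$ has graph $\{(K_0(\l)h, (K_1(\l) + a_4(\l)K_0(\l))h)\}$, so (using Lemma 2.1 of \cite{MalMog02}) the assumption $0 \in \rho(\tau(\l) + a_4(\l))$ is equivalent to $0 \in \rho(K_1(\l) + a_4(\l)K_0(\l))$ and yields $(\tau(\l) + a_4(\l))^{-1} a_3(\l) = K_0(\l)\b(\l)$. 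Consequently,
$$\f_\tau(\l) = a_1(\l) - a_2(\l) K_0(\l)\b(\l), \qquad \gamma_\tau(\l) = \wt\gamma_1(\l) - \wt\gamma_2(\l) K_0(\l)\b(\l),$$
and the defining equation $a_3(\l) = (K_1(\l) + a_4(\l)K_0(\l))\b(\l)$ implies, after taking adjoints, $a_3^*(\l) = \b^*(\l)(K_1^*(\l) + K_0^*(\l)a_4^*(\l))$.

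Next, I would read off the four block entries of (3.38) in the decomposition $\cH \oplus \cH_0 = \cH \oplus \cH_1 \oplus \cH_2$, obtaining
\begin{align*}
a_1(\mu) - a_1^*(\l) &= (\mu-\bar\l)\wt\gamma_1^*(\l)\wt\gamma_1(\mu), & a_2(\mu) - a_3^*(\l)P_1 &= (\mu-\bar\l)\wt\gamma_1^*(\l)\wt\gamma_2(\mu),\\
a_3(\mu) - a_2^*(\l) &= (\mu-\bar\l)\wt\gamma_2^*(\l)\wt\gamma_1(\mu), & a_4(\mu) - a_4^*(\l)P_1 + iP_2 &= (\mu-\bar\l)\wt\gamma_2^*(\l)\wt\gamma_2(\mu).
\end{align*}
These four identities are the only input from (3.38). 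I would also record the elementary facts $P_j K_0(\mu) = K_{0j}(\mu)$ for $j\in\{1,2\}$ (by the block form (3.42)) and $K_0^*(\l)K_1(\mu) = K_{01}^*(\l)K_1(\mu)$ (since $K_1(\mu)$ has range in $\cH_1$, so $K_{02}^*(\l)K_1(\mu) = 0$).

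Now I would compute the two sides of (3.40) in parallel. For the left-hand side,
$$\f_\tau(\mu) - \f_\tau^*(\l) = [a_1(\mu) - a_1^*(\l)] - a_2(\mu)K_0(\mu)\b(\mu) + \b^*(\l)K_0^*(\l)a_2^*(\l);$$
substitute the block identity for $a_1(\mu) - a_1^*(\l)$ and use $a_2(\mu) = a_3^*(\l)P_1 + (\mu-\bar\l)\wt\gamma_1^*(\l)\wt\gamma_2(\mu)$ together with $a_2^*(\l) = a_3(\mu) - (\mu-\bar\l)\wt\gamma_2^*(\l)\wt\gamma_1(\mu)$, then replace $a_3(\mu)$ and $a_3^*(\l)$ by the expressions from Step 1. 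For the right-hand side, expand $\gamma_\tau^*(\l)\gamma_\tau(\mu)$ into four terms, multiply by $(\mu-\bar\l)$, and replace $(\mu-\bar\l)\wt\gamma_2^*(\l)\wt\gamma_2(\mu) = a_4(\mu) - a_4^*(\l)P_1 + iP_2$ from the $(2,2)$ block. After both expansions the $\wt\gamma$-cross terms coincide, the $\b^*(\l)K_0^*(\l)a_4(\mu)K_0(\mu)\b(\mu)$ and $\b^*(\l)K_0^*(\l)a_4^*(\l)K_{01}(\mu)\b(\mu)$ contributions cancel between the two sides, and the term $iP_2$ produces exactly $-i\b^*(\l)K_{02}^*(\l)K_{02}(\mu)\b(\mu)$ on the right of (3.40), leaving only the skew-Hermitian combination $\b^*(\l)(K_{01}^*(\l)K_1(\mu) - K_1^*(\l)K_{01}(\mu))\b(\mu)$.

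The proof is therefore essentially algebraic; the main obstacle is not any conceptual difficulty but the careful bookkeeping of projections $P_1$, $P_2$ and of which operators land in $\cH_1$ versus $\cH_0$. In particular, the appearance of $K_{01}^*(\l)K_1(\mu)$ (rather than $K_0^*(\l)K_1(\mu)$) and the $-iK_{02}^*(\l)K_{02}(\mu)$ term must be tracked through the substitution $P_1 K_0 = K_{01}$, $P_2 K_0 = K_{02}$ and the cancellation against the $iP_2$ arising from the $(2,2)$ block of (3.38). Once these are matched, the identity (3.40) follows.
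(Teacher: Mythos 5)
Your proposal is correct and follows essentially the same route as the paper: the paper likewise splits \eqref{3.38} into the four block identities \eqref{3.43}--\eqref{3.44}, factors $(\tau(\l)+a_4(\l))^{-1}=K_0(\l)(K_1(\l)+a_4(\l)K_0(\l))^{-1}$ so that $(\tau(\l)+a_4(\l))^{-1}a_3(\l)=K_0(\l)\b(\l)$, and then performs the same algebraic expansion, isolating the residual term $a_3^*(\l)X(\mu,\l)a_3(\mu)$ and reducing it via $P_1K_0=K_{01}$, $P_2K_0=K_{02}$ to the skew-Hermitian correction in \eqref{3.40}. The only difference is organizational (the paper first computes $(\mu-\ov\l)\g_\tau^*(\l)\g_\tau(\mu)$ and then simplifies the leftover, rather than expanding both sides in parallel), which is immaterial.
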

\begin{proof}
First observe that \eqref{3.38} is equivalent to the relations
\begin{align}
a_1(\mu)-a_1^*(\l)=(\mu-\ov\l)\wt\g_1^*(\l)\wt\g_1(\mu), \;\;\;\; a_4(\mu)- a_4^*(\l)P_1+iP_2= (\mu-\ov\l)\wt\g_2^*(\l)\wt\g_2(\mu),\label{3.43}\\
a_2(\mu)- a_3^*(\l)P_1= (\mu-\ov\l)\wt\g_1^*(\l)\wt\g_2(\mu), \;\;\;\; a_3(\mu)- a_2^*(\l)= (\mu-\ov\l)\wt\g_2^*(\l)\wt\g_1(\mu), \label{3.44}
\end{align}
where $P_j$ is the orthoprojector in $\cH_0$ onto $\cH_j, \; j\in\{ 1,2\}$ and $\mu,\l\in\Lambda$. Letting
\begin{equation}\label{3.45}
\a_1(\l)=(K_1(\l)+a_4(\l)K_0(\l))^{-1}, \;\;\;\; \a(\l)=(\tau(\l)+a_4(\l))^{-1}=K_0(\l)\a_1(\l),
\end{equation}
one rewrites \eqref{3.39} and \eqref{3.41} as
\begin{equation}\label{3.46}
\f_\tau(\l)=a_1(\l)-a_2(\l)\a(\l)a_3(\l), \qquad \g_\tau(\l)= \wt\g_1(\l) -\wt\g_2(\l)\a(\l) a_3(\l).
\end{equation}
Combining these equalities with \eqref{3.43} and \eqref{3.44} one derives
\begin{align*}
(\mu-\ov\l)\g_\tau^*(\l)\g_\tau(\mu)=(\mu-\ov\l)(\wt\g_1^*(\l) -a_3^*(\l)\a^*(\l)\wt\g_2^*(\l))(\wt\g_1(\mu) -\wt\g_2(\mu)\a(\mu) a_3(\mu))=\\
a_1(\mu)-a_1^*(\l)-a_3^*(\l)\a^*(\l)(a_3(\mu)-a_2^*(\l))-(a_2(\mu)- a_3^*(\l)P_1)\a(\mu) a_3(\mu)+\quad\\
a_3^*(\l)\a^*(\l)(a_4(\mu)- a_4^*(\l)P_1+iP_2)\a(\mu) a_3(\mu).\qquad \qquad\qquad
\end{align*}
Thus
\begin{equation}\label{3.47}
(\mu-\ov\l)\g_\tau^*(\l)\g_\tau(\mu)=\f_\tau(\mu)-\f_\tau^*(\l)+ a_3^*(\l)X(\mu,\l)a_3(\mu),
\end{equation}
\begin{align*}
\text{where}\;\; X(\mu,\l)=\a^*(\l)\up\cH_1\cd a_4(\mu)\a(\mu)-\a^*(\l)a_4^*(\l)P_1\a(\mu)+i\a^*(\l)\up\cH_2\cd P_2\a(\mu)+\\
+P_1\a(\mu)-\a^*(\l)\up\cH_1.\qquad\qquad\qquad\qquad\qquad
\end{align*}
It follows from \eqref{3.45} that
\begin{align*}
P_1\a(\mu)=K_{01}(\mu)\a_1(\mu), \qquad \a^*(\l)\up\cH_1=\a_1^*(\l)K_{01}^*(\l), \\
\a^*(\l)\up\cH_2\cd P_2\a(\mu) =\a_1^*(\l)K_{02}^*(\l)K_{02}(\mu)\a_1(\mu)\qquad
\end{align*}
and, consequently,
\begin{align*}
X(\mu,\l)=\a_1^*(\l)K_{01}^*(\l)a_4(\mu)K_0(\mu)\a_1(\mu)-\a_1^*(\l) K_{0}^*(\l) a_4^*(\l) K_{01}(\mu)\a_1(\mu)+\qquad\\
i \a_1^*(\l)K_{02}^*(\l) K_{02}(\mu) \a_1(\mu) +K_{01}(\mu)\a_1(\mu)-\a_1^*(\l)K_{01}^*(\l) =\a_1^*(\l)[K_{01}^*(\l) a_4 (\mu)K_0(\mu)-\\
K_{0}^*(\l) a_4^*(\l) K_{01}(\mu)+iK_{02}^*(\l) K_{02}(\mu) +(K_{1}^*(\l)+K_0^*(\l)a_4^*(\l)) K_{01}(\mu)-K_{01}^*(\l)(K_1(\mu)+\\
a_4(\mu)K_0(\mu)) ] \a_1(\mu)= \a_1^*(\l)[K_{1}^*(\l) K_{01}(\mu)-K_{01}^*(\l)K_{1}(\mu)+i K_{02}^*(\l) K_{02}(\mu) ]\a_1(\mu).
\end{align*}
Substituting this equality in \eqref{3.47} we arrive at the required identity   \eqref{3.40}.
\end{proof}
\begin{proposition}\label{pr3.8}
Assume that $\Pi=\bta$ is a decomposing $D$-triplet \eqref{1.30} for $L$ and $M_+(\cd)$ is the corresponding Weyl function. Moreover let $\pair\in\RH$ be a collection of holomorphic operator pairs \eqref{3.0}--\eqref{3.2}. Then:

1) for every $\l\in\bC_+$ there exists the unique pair of operator functions $u_{j\tau}(\cd,\l)\in  \LH{H^n},$  $\; j\in\{1,2\}$ obeying the equation \eqref{1.23} and the boundary conditions
\begin{align}
(\hat C_1(\l)u_{j\tau}^{(1)}(0,\l)+\hat C_2(\l)u_{j\tau}^{(2)}(0,\l))\hat h+(C_0'(\l)\G_0'-C_1'(\l)\G_1')(u_{j\tau}(t,\l)\hat h)=\label{3.48}\\
=(-1)^{j-1}\hat C_j(\l)\hat h,\quad \hat h\in H^n, \;\; j\in\{1,2\}.\qquad\qquad\nonumber
\end{align}

2) The operator functions $u_{j\tau}(\cd,\l)$ are connected with the operator solution \eqref{1.38}  via
\begin{equation}\label{3.50}
u_{j\tau}(t,\l)=(-1)^{j-1}Z_+(t,\l)(C_0(\l)-C_1(\l)M_+(\l))^{-1}\hat C_j(\l), \;\;\,  j\in\{1,2\}, \;\; \l\in\bC_+.
\end{equation}
\end{proposition}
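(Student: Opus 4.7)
The plan is to take the explicit formula \eqref{3.50} as the \emph{definition} of $u_{j\tau}(\cd,\l)$, verify directly that it satisfies all the properties claimed in statement 1), and then deduce uniqueness from the fact that $Z_+(\cd,\l)$ parametrises the defect subspace $\gN_\l(L_0)$. The whole argument rests on two inputs already in hand: the identity $(C_0(\l)\G_0-C_1(\l)\G_1)\g_+(\l)=C_0(\l)-C_1(\l)M_+(\l)=\wt T_+(\l)$ from \eqref{3.3}--\eqref{3.7}, and the invertibility $0\in\rho(\wt T_+(\l))$ for all $\l\in\bC_+$, which follows from $\pair\in\RH$.

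For existence (and simultaneously for statement 2), I would set $A_j(\l):=(-1)^{j-1}(\wt T_+(\l))^{-1}\hat C_j(\l)\in[H^n,\cH_0]$ and define $u_{j\tau}(t,\l):=Z_+(t,\l)A_j(\l)$. Since $Z_+(\cd,\l)\in\LH{\cH_0}$ by Theorem \ref{th1.10}(3) and $A_j(\l)$ is a bounded operator from $H^n$ to $\cH_0$, one gets $u_{j\tau}(\cd,\l)\in\LH{H^n}$; and because $Z_+(\cd,\l)$ is an operator solution of \eqref{1.23}, so is $u_{j\tau}(\cd,\l)$. Unfolding the block-matrix forms \eqref{1.30} of $\G_0,\G_1$ against \eqref{3.1} of $C_0(\l),C_1(\l)$ shows that the left-hand side of \eqref{3.48} is exactly $(C_0(\l)\G_0-C_1(\l)\G_1)(u_{j\tau}(\cd,\l)\hat h)$. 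Using $u_{j\tau}(t,\l)\hat h=(\g_+(\l)A_j(\l)\hat h)(t)$ together with the identity \eqref{3.7}, this collapses to $\wt T_+(\l)A_j(\l)\hat h=(-1)^{j-1}\hat C_j(\l)\hat h$, which is immediate from the definition of $A_j(\l)$.

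For uniqueness, suppose $v(\cd,\l)\in\LH{H^n}$ is an operator solution of \eqref{1.23} satisfying the homogeneous version of \eqref{3.48}. For each $\hat h\in H^n$ the vector function $t\mapsto v(t,\l)\hat h$ lies in $\gN_\l(L_0)$, so by Lemma \ref{lem1.8}(2) combined with the isomorphism $\g_+(\l):\cH_0\to\gN_\l(L_0)$, there is a unique $\tilde h=\tilde h(\hat h)\in\cH_0$ with $v(t,\l)\hat h=Z_+(t,\l)\tilde h$, and the dependence $\hat h\mapsto\tilde h$ is a bounded linear operator. Inserting into the homogeneous boundary condition and applying \eqref{3.7} yields $\wt T_+(\l)\tilde h=0$ for every $\hat h$, hence $\tilde h=0$ by invertibility of $\wt T_+(\l)$, so $v\equiv 0$.

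The only real obstacle is the sign-sensitive bookkeeping required to rewrite the long-form boundary condition \eqref{3.48} as the compact identity $(C_0(\l)\G_0-C_1(\l)\G_1)u_{j\tau}(\cd,\l)\hat h=(-1)^{j-1}\hat C_j(\l)\hat h$; once one tracks carefully the minus sign in $\G_1 y=\{-y^{(1)}(0),\G_1' y\}$ against the $\hat C_1(\l)$-block of $C_1(\l)$, the remainder of the proof is a direct one-line application of \eqref{3.7} and the invertibility of $\wt T_+(\l)$.
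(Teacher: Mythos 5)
Your proposal is correct and follows essentially the same route as the paper: define $u_{j\tau}$ by the explicit formula \eqref{3.50}, use $(\g_+(\l)h_0)(t)=Z_+(t,\l)h_0$ together with $(C_0(\l)\G_0-C_1(\l)\G_1)\g_+(\l)=C_0(\l)-C_1(\l)M_+(\l)$ to verify \eqref{3.48}, and derive uniqueness from the invertibility of $C_0(\l)-C_1(\l)M_+(\l)$. The paper phrases the uniqueness step as $\cD(\wt A(\l))\cap\gN_\l(L_0)=\{0\}$ because $\l\in\rho(\wt A(\l))$, but this is the same fact you use, so the arguments coincide in substance.
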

\begin{proof}
First observe that in view of \eqref{3.1} the conditions \eqref{3.48} are equivalent to
\begin{equation}\label{3.51}
(C_0(\l)\G_0-C_1(\l)\G_1)(u_{j\tau}(t,\l)\hat h)=(-1)^{j-1}\hat C_j(\l)\hat h,\quad \hat h\in H^n, \;\; j\in\{1,2\}.
\end{equation}
Let $u_{j\tau}(\cd,\l)\in \LH{H^n}, \; j\in\{1,2\}$ be operator solutions of \eqref{1.23} given by  \eqref{3.50}. Using Lemma \ref{lem1.2} one obtains
\begin{align*}
(C_0(\l)\G_0-C_1(\l)\G_1)(Z_+(t,\l)h_0)=(C_0(\l)\G_0-C_1(\l)\G_1) \g_+(\l)h_0=\\
=(C_0(\l)-C_1(\l)M_+(\l))h_0, \quad h_0\in\cH_0.\qquad
\end{align*}
Hence the operator functions $u_{j\tau}(\cd,\l)$ obey \eqref{3.51} and, consequently, \eqref{3.48}. Next assume that a pair of operator functions $u_j'(\cd,\l)\in\LH{H^n}$ also obeys \eqref{1.23} and   \eqref{3.48} and let $y_j:=u_{j\tau}(t,\l)\hat h-u_j'(t,\l)\hat h,\; \hat h\in H^n,\;\l\in\bC_+$. Consider the extension $\wt A(\l)\in\exl$ given by \eqref{3.8}. It is easy to see that $y_j\in \cD (\wt A(\l))\cap \gN_\l(L_0), \;\l\in\bC_+ $. At the same time $\l\in\rho (\wt A(\l))$ (see \eqref{3.9}) and, therefore, $\cD (\wt A(\l))\cap \gN_\l(L_0)=\{0\}$. Hence $y_j=0$ and, consequently, $u_{j\tau}(t,\l)=u_j'(t,\l), \; j\in\{1,2\}$. This implies the uniqueness of $u_{j\tau}(\cd,\l)$.
\end{proof}
Now we are ready to prove that the characteristic matrix $\Om_\tau(\cd)$  is a Nevanlinna operator function. This statement will be implied by an identity for $\Om_\tau(\cd)$, which, to our mind, is of self-contained interest.
\begin{proposition}\label{pr3.9}
Let the assumptions of Propositions \ref{pr3.8} be satisfied and let \eqref{1.6} be the equivalent to \eqref{3.0} representation of $\tau$. Moreover let $u_{j\tau}(\cd,\l), \; j\in\{1,2\}$ be operator solutions introduced in Proposition  \ref{pr3.8}, let
\begin{equation}\label{3.52}
U_\tau(t,\l):=(u_{2\tau}(t,\l)\;\; u_{1\tau}(t,\l)):H^n\oplus H^n\to H, \quad\l\in\bC_+
\end{equation}
and let $\b(\cd):\bC_+\to [H^n\oplus H^n,\cK_+']$ be the operator function given for every $\l\in\bC_+$ by the block-matrix representation
\begin{equation}\label{3.53}
\b(\l)=(-(K_1(\l)+M_+(\l)K_0(\l))^{-1}M_+(\l)\up H^n\;\;\,(K_1 (\l)+M_+(\l)K_0(\l))^{-1} \up H^n)
\end{equation}
Then: 1) the corresponding characteristic matrix $\Om_\tau(\cd)$  obeys the identity
\begin{align}
\Om_\tau(\mu)-\Om_\tau^*(\l)=(\mu-\ov\l)\int_0^b  U_\tau^*(t,\l) U_\tau(t,\mu)\, dt+ \b^*(\l) (K_{01}^*(\l)K_1(\mu)-\label{3.54}\\
-K_1^*(\l)K_{01}(\mu)-i K_{02}^*(\l)K_{02}(\mu))\b (\mu), \quad\; \mu,\l\in \bC_+,\qquad\nonumber
\end{align}
in which the  operator functions $K_{0j}(\cd):\bC_+\to [\cK_+',\cH_j], \;j\in\{1,2\} $ are defined by the block-matrix representation
\begin{equation*}
K_0(\l)=(K_{01}(\l)\;\;K_{02}(\l))^\top :\cK_+'\to \cH_1\oplus\cH_2, \quad \l\in\bC_+.
\end{equation*}
The integral in \eqref{3.54} converges strongly, that is
\begin{equation}\label{3.55}
\int_0^b  U_\tau^*(t,\l) U_\tau(t,\mu)\, dt = s-\lim_{\eta\uparrow b} \int_0^\eta  U_\tau^*(t,\l) U_\tau(t,\mu)\, dt.
\end{equation}

2) the characteristic matrix $\Om_\tau (\cd) $ satisfies the inequality
\begin{equation}\label{3.56}
\frac{Im\, \Om_\tau(\mu)} {Im \,\mu} \geq  \smallint\limits_0^b  U_\tau^*(t,\mu) U_\tau(t,\mu)\, dt, \quad \mu\in\bC_+,
\end{equation}
which turns into the equality for canonical characteristic matrices. Formula \eqref{3.56} implies that $\Om_\tau (\cd)$ is a Nevanlinna operator function.
\end{proposition}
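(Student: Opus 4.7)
The strategy is to deduce identity \eqref{3.54} by a direct application of Lemma \ref{lem3.7} to a suitable operator matrix, and then obtain the inequality \eqref{3.56} as its diagonal specialization.

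First set $\cH:=H^n\oplus H^n$ and let $\iota:\cH\to\cH_0$ be the embedding $\iota(x_1,x_2)=(x_1,0)\in H^n\oplus\cH'_0$. I would apply Lemma \ref{lem3.7} with
\[
a_1(\l)=\Om_0(\l),\quad a_2(\l)=S_+(\l),\quad a_3(\l)=S_-^*(\ov\l),\quad a_4(\l)=M_+(\l),
\]
taking $\tau(\l)=\tau_+(\l)\in\RH$ and splitting the $\g$-field as $\wt\g_1(\l)=-\g_+(\l)\iota$, $\wt\g_2(\l)=\g_+(\l)$. By \eqref{3.35} the resulting $\f_\tau(\l)$ equals $\Om_\tau(\l)$ for $\l\in\bC_+$.

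The crux is verifying the four block identities equivalent to hypothesis \eqref{3.38}. The $(2,2)$ block is exactly \eqref{1.15.0}. The $(1,1)$ block reduces to $\iota^*(M_+(\mu)-M_+^*(\l)P_1+iP_2)\iota=\Om_0(\mu)-\Om_0^*(\l)$, which follows from the block forms \eqref{1.32a} of $M_+$ and \eqref{3.34} of $\Om_0$, together with $\iota^*P_2\iota=0$ (since $\iota(\cH)\subset\cH_1$) and $m^*(\ov\l)=m(\l)$. The $(1,2)$ and $(2,1)$ blocks both amount to
\[
S_+(\mu)-S_-(\ov\l)P_1=-\iota^*(M_+(\mu)-M_+^*(\l)P_1+iP_2),
\]
a routine computation using \eqref{3.24.1}, \eqref{3.24.2} and the relations $m^*(\ov\l)=m(\l)$, $M_{2-}(\ov\l)=M_{3+}^*(\l)$ that follow from $M_+^*(\l)=M_-(\ov\l)$. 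The minus sign is exactly what the choice $\wt\g_1=-\g_+\iota$ accommodates.

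Lemma \ref{lem3.7} then yields \eqref{3.40} for $\f_\tau=\Om_\tau$, with $\b(\l)=(K_1(\l)+M_+(\l)K_0(\l))^{-1}S_-^*(\ov\l)$ and $\g_\tau(\l)=-\g_+(\l)[\iota+(\tau_+(\l)+M_+(\l))^{-1}S_-^*(\ov\l)]$. To identify the integral term, I would rewrite $(\tau_+(\l)+M_+(\l))^{-1}=-(C_0(\l)-C_1(\l)M_+(\l))^{-1}C_1(\l)$ by \eqref{3.17} and check directly that
\[
(C_0(\l)-C_1(\l)M_+(\l))\iota-C_1(\l)S_-^*(\ov\l)=(\hat C_2(\l)\;\;-\hat C_1(\l))
\]
on $\cH$, using the block forms \eqref{3.1} of $C_0,C_1$ and the explicit form of $S_-^*(\ov\l)$. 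This gives $\g_\tau(\l)=\g_+(\l)(C_0(\l)-C_1(\l)M_+(\l))^{-1}(-\hat C_2(\l)\;\;\hat C_1(\l))$, which by \eqref{3.50}, \eqref{3.52} and $(\g_+(\l)h)(t)=Z_+(t,\l)h$ from \eqref{1.40} is the operator of multiplication by the kernel $U_\tau(t,\l)$. The strong-integral formula $\g_\tau^*(\l)\g_\tau(\mu)=\int_0^b U_\tau^*(t,\l)U_\tau(t,\mu)\,dt$ with convergence as in \eqref{3.55} then follows from the analogue of \eqref{29.2} applied to $\g_\tau^*(\l)$ (i.e., for any $x\in\cH$, the truncation $\chi_{[0,\eta]}U_\tau(\cd,\mu)x$ converges to $U_\tau(\cd,\mu)x$ in $\gH$, whence continuity of $\g_\tau^*(\l)$ produces convergence in $\cH$). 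This finishes \eqref{3.54}.

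For part 2), set $\mu=\l$ in \eqref{3.54} and divide by $2i\,Im\,\l$. The boundary term becomes $(2\,Im\,\l)^{-1}\b^*(\l)[2\,Im(K_{01}^*(\l)K_1(\l))-K_{02}^*(\l)K_{02}(\l)]\b(\l)$, which is nonnegative by \eqref{1.7a}; this yields \eqref{3.56}, and equality when $\tau\in\RZ$ by condition (a) of Definition \ref{def1.0.4}. The Nevanlinna property of $\Om_\tau(\cd)$ is then immediate from \eqref{3.56}, the already-noted relation $\Om_\tau^*(\ov\l)=\Om_\tau(\l)$, and holomorphy. The main obstacle is the bookkeeping in the block verification of \eqref{3.38}: coordinating signs, keeping the block forms of $M_+,S_\pm,\Om_0$ straight, and fixing the embedding $\iota$ correctly. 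Each individual identity reduces to an elementary computation, but the combination is somewhat intricate and the sign choice $\wt\g_1=-\g_+\iota$ is essential.
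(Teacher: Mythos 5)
Your proposal is correct and follows essentially the same route as the paper's own proof: both apply Lemma \ref{lem3.7} to the matrix $\begin{pmatrix}\Om_0(\l)&S_+(\l)\cr S_-^*(\ov\l)&M_+(\l)\end{pmatrix}$ with $\wt\g_1(\l)=(-\g_+(\l)\up H^n\;\;0)$, $\wt\g_2(\l)=\g_+(\l)$, verify \eqref{3.38} blockwise from \eqref{1.15.0}, identify $\g_\tau(\l)$ with $\g_+(\l)(C_0(\l)-C_1(\l)M_+(\l))^{-1}(-\hat C_2(\l)\;\,\hat C_1(\l))$ and hence with $U_\tau(\cd,\l)$ via \eqref{1.40} and \eqref{3.50}, and then read off \eqref{3.56} from \eqref{1.7a}. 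The only difference is notational (your embedding $\iota$ versus the paper's explicit restriction $\up H^n$), and your block verifications, including the sign bookkeeping, match the paper's computation \eqref{3.59}.
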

\begin{proof}
1) It follows from \eqref{3.35} that $\Om_\tau(\cd)$ is a transform \eqref{3.39} of $\tau_+(\l)$ with operator coefficients $a_j(\l), \; j=1\div 4$, which are entries of the matrix
\begin{equation}\label{3.57}
A_\Pi(\l)=\begin{pmatrix} a_1(\l)& a_2(\l)\cr a_3(\l)& a_4(\l)\end{pmatrix}:=\begin{pmatrix} \Om_0(\l) & S_+(\l) \cr S_-^*(\ov\l) & M_+(\l)\end{pmatrix}(\in [(H^n\oplus H^n)\oplus\cH_0, (H^n\oplus H^n)\oplus\cH_1]).
\end{equation}
To prove \eqref{3.54} we apply Lemma \ref{lem3.7} to $A_\Pi(\l)$.

Assume that $\g_+(\cd)$ is the $\g$-field \eqref{1.13} and
\begin{equation}\label{3.58}
\wt \g_1(\l):=(-\g_+(\l)\up H^n\;\;\;0):H^n\oplus H^n\to\gH, \quad \wt\g_2(\l):=\g_+(\l)(\in [\cH_0,\gH])
\end{equation}
for all $\l\in\bC_+$. Let us show that the matrix \eqref{3.57} obeys the relations \eqref{3.43} and \eqref{3.44} (or, equivalently, \eqref{3.38}) with the operator functions \eqref{3.58}. To this end observe that the equalities \eqref{3.43} are immediate from \eqref{1.15.0}. Next, the first relation in \eqref{3.44} can be written in our case as
\begin{equation}\label{3.59}
S_+(\mu)-S_-(\ov\l)P_1=(\mu-\ov\l)\begin{pmatrix}-P_{H^n}\g_+^*(\l)\cr 0 \end{pmatrix}\g_+(\mu), \quad \mu,\l\in\bC_+,
\end{equation}
where $P_{H^n}$ is the orthoprojector in $\cH_0(=H^n\oplus\cH_0')$ onto $H^n$. To prove \eqref{3.59} note that $H^n\subset \cH_1$ and, consequently, $P_{H^n}P_1=P_{H^n}, \;P_{H^n}P_2=0 $. This and \eqref{3.24.1}, \eqref{3.24.2} yield
\begin{align*}
S_+(\mu)= (-P_{H^n}M_+(\mu)\;\; P_{H^n})^\top :\cH_0\to H^n\oplus H^n,\qquad\qquad\qquad\\
S_-(\ov\l)P_1=(-P_{H^n}M_+^*(\l)\;\;P_{H^n})^\top P_1= (-P_{H^n}M_+^*(\l)P_1 \;\; P_{H^n})^\top:\cH_0\to H^n\oplus H^n
\end{align*}
Moreover applying the operator $P_{H^n}$ to the identity \eqref{1.15.0} one derives
\begin{equation*}
P_{H^n}M_+(\mu)-P_{H^n}M_+^*(\l)P_1=(\mu-\ov\l)P_{H^n}\g_+^*(\l)\g_+ (\mu), \quad \mu,\l\in\bC_+.
\end{equation*}
Combining these equalities we arrive at \eqref{3.59}. Finally the second relation in \eqref{3.44} is equivalent to the first one.

Now by Lemma \ref{lem3.7} one has
\begin{align}
\Om_\tau(\mu)-\Om_\tau^*(\l)=(\mu-\ov\l)\g_\tau^*(\l)\g_\tau (\mu)+ \b^*(\l) (K_{01}^*(\l)K_1(\mu)-\label{3.60}\\
-K_1^*(\l)K_{01}(\mu)-i K_{02}^*(\l)K_{02}(\mu))\b (\mu), \quad\; \mu,\l\in \bC_+,\nonumber
\end{align}
where $\g_\tau(\cd)$ and $\b(\cd)$ are given by \eqref{3.41}  and \eqref{3.41a}. It follows from \eqref{3.24.2} that
\begin{equation}\label{3.61}
a_3(\l)=S_-^*(\ov\l)=(-M_+(\l)\up H^n\;\;\, I_{H^n}):H^n\oplus H^n\to\cH_1.
\end{equation}
Therefore the function $\b(\cd)$ in \eqref{3.60} is defined by \eqref{3.53} and to prove the identity \eqref{3.54} it remains to show that
\begin{equation}\label{3.62}
\g_\tau^*(\l)\g_\tau (\mu)=\smallint\limits_0^b  U_\tau^*(t,\l) U_\tau(t,\mu)\, dt:=s-\lim_{\eta\uparrow b} \smallint\limits_0^\eta U_\tau^*(t,\l) U_\tau(t,\mu)\, dt.
\end{equation}
Combining \eqref{3.61} with \eqref{3.58} and using next \eqref{3.17}  one obtains
\begin{align*}
\g_\tau(\l)=(-\g_+(\l)\up H^n\;\;\; \; 0)-\g_+(\l)(\tau_+(\l)+M_+(\l))^{-1}(-M_+(\l)\up H^n\;\;\; I_{H^n})=\\
=\g_+(\l)\bigl((-I_{\cH_0}+(\tau_+(\l)+M_+(\l))^{-1}M_+(\l))\up H^n\;\;\; -(\tau_+(\l)+M_+(\l))^{-1}\up H^n \bigr)=\\
=\g_+(\l)(C_0(\l)-C_1(\l)M_+(\l))^{-1}(-C_0(\l)\up H^n\;\;\;\,C_1(\l)\up H^n)=\quad\qquad\qquad\\
=\g_+(\l)(C_0(\l)-C_1(\l)M_+(\l))^{-1}(-\hat C_2(\l)\;\;\;\hat C_1(\l)).\qquad\qquad\qquad\qquad
\end{align*}
This and \eqref{1.40}, \eqref{3.50} give the equality
\begin{equation}\label{3.63}
(\g_\tau(\l)\wt h)(t)=U_\tau(t,\l)\wt h, \quad \wt h\in H^n\oplus H^n,\;\;\l\in\bC_+.
\end{equation}
Now applying Lemma \ref{lem1.8}, 3) to the operator solution $U_\tau (\cd,\l)$ one obtains
\begin{equation*}
\g_\tau^*(\l)f=\smallint\limits_0^b  U_\tau^*(t,\l) f(t)\, dt:=\lim_{\eta\uparrow b} \smallint\limits_0^\eta U_\tau^*(t,\l) f(t)\, dt, \quad f=f(t)\in\gH,
\end{equation*}
which together with \eqref{3.63} implies \eqref{3.62}.

The statement 2) of the proposition is immediate from the identity \eqref{3.54} and the inequality \eqref{1.7a}, which turns into the equality for $\tau\in\RZ$ (see Definitions \ref{def1.0.3} and \ref{def1.0.4}).
\end{proof}
\subsection{The case of equal deficiency indices }
In this subsection we suppose that the expression \eqref{1.20} obeys the condition $n_{b+}=n_{b-}$, in which case $n_+(L_0)=n_-(L_0)$. Moreover we assume that $\Pi=\bt$ is a decomposing boundary triplet \eqref{1.30} for $L$, so that $\cH_0'=\cH_1'=:\cH'$ and  $\cH_0=\cH_1=:\cH$ (see Definition \ref{def1.9b}). Our aim is to reformulate for this case the foregoing results on generalized resolvents and characteristic matrices.

First of all, Theorem \ref{th3.1} can be restated as follows.
\begin{theorem}\label{th3.10}
Under the above suppositions the relations
\begin{align}
\cD (\wt A(\l))=\{y\in\cD: \,\hat C_1(\l) y^{(1)}(0)+ \hat C_2(\l) y^{(2)}(0)+C'_0(\l)\G'_0 y-C'_1(\l)\G'_1 y=0\},\label{3.63a}\\
\R=\bR_\tau (\l)= (\wt A(\l)-\l)^{-1},\quad \l\in\bC_+\cup \bC_-\qquad\qquad\qquad\nonumber
\end{align}
(with $\wt A(\l)=L\up\cD (\wt A(\l))$) establish a bijective correspondence between all generalized resolvents $\R$ of the operator $L_0$ and all Nevanlinna operator pairs
\begin{equation}\label{3.64}
\tau (\l)=\{(C_0(\l),C_1(\l));\cK\}\in\wt R(\cH), \quad \l\in\bC_+\cup \bC_-
\end{equation}
(see Remark \ref{rem1.0.5}, 2)) defined by the block-matrix representations
\begin{align}
C_0(\l)=(\hat C_{2}(\l)\;\;C'_{0}(\l)): H^n\oplus\cH'\to\cK, \qquad \qquad\qquad \qquad\qquad \qquad\qquad  \label{3.65}\\
\qquad \qquad\qquad \qquad \qquad \qquad\qquad  C_1(\l)=(\hat C_{1}(\l)\;\;C'_1(\l)):H^n\oplus \cH'\to\cK , \quad \l\in\bC_+\cup\bC_-.\nonumber
\end{align}
Moreover $\R$ is a canonical resolvent if and only if $\tau\in\Rz$.
\end{theorem}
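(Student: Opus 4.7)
The statement is the specialization of Theorem \ref{th3.1} to the case when the decomposing $D$-triplet reduces to a decomposing boundary triplet, i.e., when $\cH_0' = \cH_1' =: \cH'$ (equivalently $\cH_0 = \cH_1 =: \cH$). My plan is therefore to invoke Theorem \ref{th3.1} and then rewrite its conclusion using the simplifications available for the class $\wt R(\cH,\cH) = \Rh$ developed in Remark \ref{rem1.0.5}(2).

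The first step is to apply Theorem \ref{th3.1}, which gives a bijective correspondence between all generalized resolvents $\R$ of $L_0$ and all collections $\pair \in \Rh$ (represented by pairs of the form \eqref{3.0}--\eqref{3.2} with $\cH_0'=\cH_1'=\cH'$). By Remark \ref{rem1.0.5}(1), when $\dim\cH_0=\dim\cH_1$ the auxiliary spaces $\cK_+$ and $\cK_-$ can be chosen equal, so I set $\cK_+ = \cK_- =: \cK$ and $D_j(\l) := C_j(\l)$ for $\l \in \bC_-$ and $j\in\{0,1\}$. By Remark \ref{rem1.0.5}(2), this repackaging produces a single $\CG$-valued Nevanlinna function $\tau(\l) = \{(C_0(\l),C_1(\l));\cK\}$, $\l\in\bC_+\cup\bC_-$, and the map $\pair\mapsto\tau$ is a bijection between $\Rh$ (collections) and $\Rh$ in its guise as the class of Nevanlinna $\CG$-valued functions. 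Using the block-matrix representations \eqref{3.1}, \eqref{3.2}, the entries satisfy $\hat D_j(\l)=\hat C_j(\l)$ and $D_j'(\l)=C_j'(\l)$ for $\l\in\bC_-$.

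The second step is to substitute this unified representation into the two-branch boundary condition \eqref{3.8}. Under the identification, both branches coincide and collapse to the single condition \eqref{3.63a}, valid uniformly for $\l\in\bC_+\cup\bC_-$; combined with \eqref{3.9}, this yields the claimed bijection between generalized resolvents and Nevanlinna operator pairs \eqref{3.64}, \eqref{3.65}. The canonical part is the last paragraph of Remark \ref{rem1.0.5}(2): Theorem \ref{th3.1} asserts that $\R$ is canonical iff $\tau\in\RZ$, and in the case $\cH_0=\cH_1=\cH$ the class $\RZ$ corresponds precisely to the constant Nevanlinna functions $\tau(\l)\equiv \t=\t^*\in\CG$, i.e., to $\Rz$.

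No genuine obstacle is expected; the proof is essentially a notational specialization, and all the substantive content (the bijection statement and the canonical criterion) is already contained in Theorem \ref{th3.1} together with the classification of the classes $\Rh$ and $\Rz$ in Remark \ref{rem1.0.5}. The only point requiring a little care is to verify that the identification $D_j(\l)=C_j(\l)$ on $\bC_-$ is compatible both with the defining inequalities of the Nevanlinna class (the conditions \eqref{1.7e}--\eqref{1.7g} collapsing correctly to the uniform ones listed in Remark \ref{rem1.0.5}(2)) and with the form of the boundary condition \eqref{3.8}; both checks are routine.
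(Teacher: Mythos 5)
Your proposal is correct and follows exactly the route the paper takes: the paper states Theorem \ref{th3.10} as a direct restatement of Theorem \ref{th3.1} for the case $\cH_0'=\cH_1'=\cH'$, with the identification of the collection $\pair$ with a single Nevanlinna pair $\tau(\cd)\in\Rh$ carried out via Remark \ref{rem1.0.5}, 2), precisely as you describe.
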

Next with every  operator pair $\tau(\cd)\in\Rh$ given by \eqref{3.64} and \eqref{3.65} we associate a family of operator solutions $Y_\tau(\cd,\l):\D\to [\cH,H] $ of the equation \eqref{1.23}, defined by the initial data
\begin{equation}\label{3.66}
\wt Y_{\tau }(0,\l)=(-\hat C_2^*(\ov\l)\;\;\; \hat C_1^*(\ov\l))^\top \, (C_0^*(\ov\l)-M(\l) C_1^*(\ov\l))^{-1}, \quad \l\in\bC_+\cup\bC_-
\end{equation}
(here $M(\l)$ is the corresponding Weyl function \eqref{1.42}). Then in view of \eqref{3.10}--\eqref{3.12} the Green function $G_\tau(x,t,\l)$,  corresponding to an operator pair $\tau\in\Rh$, is given by the equality \eqref{3.13}, in which $Y_\tau (\cd,\l)$ and $Z_0(\cd,\l)$ are  the operator solutions \eqref{3.66} and \eqref{1.43} respectively.

Now Theorem \ref{th3.3} implies the following result.
\begin{theorem}\label{th3.11}
Let the above assumptions be satisfied. Then the equalities \eqref{3.13} and \eqref{3.14} give a bijective correspondence between all generalized (canonical) resolvents $\R$ of the operator $L_0$ and all  operator pairs $\tau\in\Rh$ (respectively, $\tau\in\Rz$).
\end{theorem}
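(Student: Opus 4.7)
The plan is to obtain Theorem \ref{th3.11} as a direct specialization of Theorem \ref{th3.3} to the decomposing boundary triplet case, where $\cH_0' = \cH_1' =: \cH'$ and, consequently, $\cH_0 = \cH_1 =: \cH$. The key input is Remark \ref{rem1.0.5}, part 2), which identifies each collection $\pair \in \RH$ (with $\cH_0 = \cH_1$) with a single Nevanlinna operator pair $\tau(\l) \in \Rh$ defined on $\bC_+ \cup \bC_-$: starting from a representation \eqref{3.64}, \eqref{3.65} of $\tau(\cd)$, one takes $\cK_+ = \cK_- =: \cK$ and sets $D_j(\l) := C_j(\l)$ for $\l \in \bC_-$, $j \in \{0,1\}$. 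Under this identification, the two branches of the boundary condition \eqref{3.8} collapse into the single condition \eqref{3.63a}, which is exactly the boundary condition appearing in Theorem \ref{th3.10}.

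Next I would verify that the family of operator solutions $Y_\tau(\cd,\l)$ defined by \eqref{3.10}--\eqref{3.12} reduces, in the boundary triplet case, to the single formula \eqref{3.66}. For this observe that in a decomposing boundary triplet $\cH'_2 = \{0\}$, so $P_1' = I_{\cH'}$ and $P_2' = 0$; hence the block-matrix formulas \eqref{3.4} and \eqref{3.5} yield $\wt D_1(\l) = D_0(\l) = C_0(\l)$ and $\wt D_0(\l) = D_1(\l) = C_1(\l)$ for $\l\in\bC_-$. Moreover the Weyl functions $M_\pm(\cd)$ of the $D$-triplet both coincide with the single Weyl function $M(\cd)$ of the boundary triplet (see Remark \ref{rem1.5} and \eqref{1.19a}). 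Substituting these identifications into \eqref{3.10} and \eqref{3.11} and using $\hat D_j(\ov\l) = \hat C_j(\ov\l)$ for $\ov\l \in \bC_-$, one checks that both relations become precisely \eqref{3.66}. Similarly the solution $Z_0(\cd,\l)$ in \eqref{3.12} reduces to the one given by Corollary \ref{cor1.11}, 3), so the Green function from Definition \ref{def3.2} takes the form described just before the theorem statement.

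With these reductions in hand, the bijective correspondence between generalized resolvents $\R$ and operator pairs $\tau \in \Rh$ provided by \eqref{3.13} and \eqref{3.14} is immediate from the analogous correspondence of Theorem \ref{th3.3}, combined with the bijections $\Rh \leftrightarrow \RH$ and $\Rz \leftrightarrow \RZ$ of Remark \ref{rem1.0.5}, 2). In particular canonical resolvents on the one side correspond to constant selfadjoint pairs on the other. The only minor subtlety I anticipate is bookkeeping of the arguments $\l$ versus $\ov\l$ in \eqref{3.10}: once one tracks that $\ov\l \in \bC_-$ whenever $\l \in \bC_+$ and invokes the identification $D_j(\ov\l) = C_j(\ov\l)$, the reduction to \eqref{3.66} is routine and the theorem follows with essentially no further computation.
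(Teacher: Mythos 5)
Your proposal is correct and follows essentially the same route as the paper: the paper also obtains Theorem \ref{th3.11} as a direct specialization of Theorem \ref{th3.3}, using the identification of collections $\pair\in\RH$ with single Nevanlinna pairs $\tau\in\Rh$ from Remark \ref{rem1.0.5}, 2), and the reduction of \eqref{3.10}--\eqref{3.12} to \eqref{3.66} and \eqref{1.43} (the paper states this reduction in the paragraph preceding the theorem rather than inside a proof environment). Your explicit verification that $\wt D_1(\l)=C_0(\l)$, $\wt D_0(\l)=C_1(\l)$ and $M_\pm(\l)=M(\l)$ when $\cH_2'=\{0\}$ is exactly the bookkeeping the paper leaves implicit.
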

Assume that $\Pi=\bt$ is a decomposing boundary triplet \eqref{1.30} for $L$ and $M(\cd)$ is the corresponding Weyl function \eqref{1.42}. Then for every operator pair $\tau\in\Rh$ the corresponding operator functions \eqref{3.18} and \eqref{3.19} generate the operator function $\wt\Om_\tau(\cd):\bC_+\cup\bC_-\to [\cH\oplus\cH]$, defined by the block-matrix representation
\begin{equation}\label{3.67}
\wt\Om_{\tau }(\l)=\begin{pmatrix} M(\l)-M(\l)(\tau(\l) +M(\l))^{-1}M(\l) & -\tfrac 1 2 I_{\cH} +M(\l) (\tau(\l) +M(\l))^{-1} \cr -\tfrac 1 2 I_{\cH} +(\tau(\l) +M(\l))^{-1}M(\l) & -(\tau(\l) +M(\l))^{-1}\end{pmatrix}
\end{equation}
In view of \eqref{3.20a}--\eqref{3.21d} this function can be also represented as
\begin{multline}\label{3.68}
\wt\Om_{\tau }(\l)=\\
=\begin{pmatrix} M(\l)(C_0(\l)-C_1(\l)M(\l))^{-1} C_0(\l) & -\tfrac 1 2 I_{\cH} -M(\l) (C_0(\l)-C_1(\l)M(\l))^{-1} C_1(\l)\cr  \tfrac 1 2 I_{\cH}-(C_0(\l)-C_1(\l)M(\l))^{-1} C_0(\l) & (C_0(\l)-C_1(\l)M(\l))^{-1} C_1(\l)\end{pmatrix}
\end{multline}
where $C_0(\l)$ and $C_1(\l)$ are components of the operator pair $\tau $ (see \eqref{3.64}).  This and formula \eqref{3.22} show that the characteristic matrix $\Om_\tau(\cd)$ of the operator $L_0$, corresponding  to an operator pair $\tau(\cd)\in\Rh$, is defined by
\begin{equation}\label{3.69}
\Om_\tau (\l)= P_{H^n\oplus H^n}\,\wt \Om_{\tau }(\l)\up H^n\oplus H^n, \;\; \l\in\bC_+\cup\bC_-.
\end{equation}
The following theorem implied by Theorem \ref{th3.6} contains the description of all characteristic matrices of the operator $L_0$.
\begin{theorem}\label{th3.12}
Let  under the above assumptions $A_0\in\exl$ be the selfadjoint extension \eqref{1.33} and let $\wt S(\l)(\in [\cH,\cH\oplus\cH]), \; S(\l)(\in [\cH,H^n\oplus H^n])$ be operator functions given by
\begin{align*}
\wt S(\l):=(-M(\l)\;\;  I_{\cH})^\top :\cH
\to \cH \oplus \cH, \;\;\;\l\in\bC_+\cup\bC_-\qquad\qquad\qquad\qquad\\
S(\l)=P_{H^n\oplus H^n}\wt S(\l)=\begin{pmatrix} -m(\l)&
-M_{2}(\l) \cr I_{H^n} & 0 \end{pmatrix}:H^n\oplus\cH'\to H^n\oplus H^n,\;\;\;\l\in\bC_+\cup\bC_-.
\end{align*}
Then: (i) the characteristic matrix $\Om_0(\l)$ of the operator $L_0$ corresponding to the canonical resolvent $\bR_0(\l)=(A_0-\l)^{-1}$ is defined by \eqref{3.34};

(ii)the equality
\begin{equation}\label{3.70}
\Om(\l)(=\Om_\tau(\l))=\Om_0(\l)-S(\l)(\tau(\l)+M(\l))^{-1}S^*(\ov\l), \quad\l\in\bC_+\cup\bC_-
\end{equation}
establishes a bijective correspondence between all characteristic matrices $\Om(\l)$ of the operator $L_0$ and all Nevanlinna operator pairs $\tau(\cd)\in\Rh$. Moreover the characteristic matrix $\Om(\l)$ in \eqref{3.70} is canonical, if and only if $\tau(\cd)\in\Rz$.
\end{theorem}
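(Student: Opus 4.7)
The plan is to derive Theorem \ref{th3.12} by specializing Theorem \ref{th3.6} to the setting of a decomposing boundary triplet, where $\cH_0'=\cH_1'=:\cH'$ and hence $\cH_0=\cH_1=:\cH$. In this specialization, by Remark \ref{rem1.5} and Corollary \ref{cor1.11}, the extension $A_0$ is selfadjoint with domain \eqref{1.33} and the two Weyl functions $M_\pm(\cd)$ merge into a single holomorphic function $M(\cd):\rho(A_0)\to [\cH]$ obeying $M^*(\ov\l)=M(\l)$, while $S_\pm(\l)$ of \eqref{3.24.1}--\eqref{3.24.2} become a single operator function $S(\l)$ matching the theorem's statement. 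Similarly, by Remark \ref{rem1.0.5}.2), the class $\RH=\Rh$ coincides with Nevanlinna operator-pair functions $\tau(\cd):\bC_+\cup\bC_-\to\CH$, and $\RZ=\Rz$ coincides with constant selfadjoint $\t=\t^*\in\CH$.

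For part (i), I would identify the canonical resolvent $(A_0-\l)^{-1}$ with the generalized resolvent corresponding to the constant boundary parameter $\tau_0(\l)\equiv\{0\}\oplus\cH(\in\CH)$, equivalently representable by the pair $(C_0,C_1)=(I_\cH,0)$. Indeed, with this choice the condition in \eqref{3.63a} reduces to $\G_0 y=0$, which by \eqref{1.33} defines $\cD(A_0)$. Since $(\tau_0(\l)+M(\l))^{-1}=0$, formula \eqref{3.67} becomes
\begin{equation*}
\wt\Om_{\tau_0}(\l)=\begin{pmatrix} M(\l) & -\tfrac{1}{2}I_\cH \cr -\tfrac{1}{2}I_\cH & 0\end{pmatrix},
\end{equation*}
and compressing by $P_{H^n\oplus H^n}$ via \eqref{3.69}, together with the block representation \eqref{1.42} of $M(\l)$ (which makes the compression of the $(1,1)$-entry equal to $m(\l)$), yields \eqref{3.34}.

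For part (ii), I would invoke Theorem \ref{th3.6} applied to the decomposing boundary triplet $\Pi=\bt$, which is a special case of a decomposing $D$-triplet. Under the identification of Remark \ref{rem1.0.5}.2), the collection $\pair\in\RH$ reduces to a single pair $\tau(\cd)\in\Rh$, and formula \eqref{3.35}, valid on $\bC_+$, becomes \eqref{3.70} there once one verifies that $S_+(\l)=S(\l)$, $S_-^*(\ov\l)=S^*(\ov\l)$, and $\tau_+(\l)+M_+(\l)=\tau(\l)+M(\l)$. The first two identifications rest on comparing \eqref{3.24.1}--\eqref{3.24.2} with the definition of $S(\l)$ in the theorem and on the symmetry $M_{2-}(\ov\l)=M_2^*(\l)$ extracted from $M_+^*(\l)=M_-(\ov\l)$. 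The extension to $\bC_-$ follows either by taking adjoints using $\Om^*(\ov\l)=\Om(\l)$ together with $M^*(\ov\l)=M(\l)$ and the Nevanlinna property $\tau^*(\ov\l)=\tau(\l)$ (in the sense of operator pairs), or equivalently by repeating the calculation in $\bC_-$ with $M_-$ in place of $M_+$. The bijectivity and the canonical-resolvent characterization transfer verbatim from Theorem \ref{th3.6} via the identifications $\RH\leftrightarrow\Rh$, $\RZ\leftrightarrow\Rz$.

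The main bookkeeping obstacle is the uniform treatment of both half-planes: Theorem \ref{th3.6} is stated only for $\l\in\bC_+$, so I must verify that the symmetry of $M$, $S$, and $\tau$ under conjugation of the spectral parameter is sufficient to extend \eqref{3.35} to \eqref{3.70} as a single formula on $\bC_+\cup\bC_-$. This is where the selfadjointness of $A_0$, guaranteed by the decomposing boundary triplet assumption $\cH_0=\cH_1$, is essential, as it is what allows $M_\pm$ to be glued into a function $M(\l)$ holomorphic on all of $\rho(A_0)\supset\bC_+\cup\bC_-$ rather than on two disjoint half-planes with a-priori unrelated behavior.
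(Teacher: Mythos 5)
Your proposal is correct and follows essentially the same route as the paper, which obtains Theorem \ref{th3.12} simply by specializing Theorem \ref{th3.6} to a decomposing boundary triplet (so that $M_\pm(\l)=M(\l)$, $\wt S_\pm(\l)=\wt S(\l)$, $S_\pm(\l)=S(\l)$, and $\RH$, $\RZ$ become $\Rh$, $\Rz$ via Remark \ref{rem1.0.5}), with part (i) coming from the constant parameter $\tau_0(\l)\equiv\{0\}\oplus\cH$ exactly as in the proof of Theorem \ref{th3.6}. One notational quibble: the identification $S_-(\ov\l)=S(\ov\l)$ follows directly from $M_-(z)=M(z)$ on $\bC_-$; the block relation you quote should read $M_{2-}(\ov\l)=M_{3+}^*(\l)=M_2(\ov\l)$ rather than $M_{2-}(\ov\l)=M_2^*(\l)$, but this does not affect the argument.
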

Next combining Propositions \ref{pr3.8} and \ref{pr3.9} we arrive at the following result.
\begin{proposition}\label{pr3.13}
Let under the above suppositions $\tau (\cd)\in\Rh$ be a Nevanlinna operator pair \eqref{3.64}, \eqref{3.65} and let \eqref{1.9c} be an equivalent representation of $\tau(\cd)$. Then:

1) for every $\l\in\bC_+\cup\bC_-$ there exists the unique pair of operator functions $u_{j\tau}(\cd,\l)\in  \LH{H^n},$  $\; j\in\{1,2\}$ obeying the equation \eqref{1.23} and the boundary conditions \eqref{3.48}. Moreover,
\begin{equation*}
u_{j\tau}(t,\l)=(-1)^{j-1}Z_0(t,\l)(C_0(\l)-C_1(\l)M(\l))^{-1}\hat C_j(\l), \;\;\,  j\in\{1,2\}, \;\; \l\in\bC_+\cup\bC_-,
\end{equation*}
where $Z_0(\cd,\l)$ is the operator solution \eqref{1.43}.

2) the  identity \eqref{3.54} for the characteristic matrix $\Om_\tau(\cd)$ takes the form
\begin{align}
\Om_\tau(\mu)-\Om_\tau^*(\l)=(\mu-\ov\l)\int_0^b  U_\tau^*(t,\l) U_\tau(t,\mu)\, dt+ \b^*(\l) (K_{0}^*(\l)K_1(\mu)-\label{3.70a}\\
-K_1^*(\l)K_{0}(\mu))\b (\mu), \quad\; \mu,\l\in \bC_+\cup\bC_-,\qquad\qquad\nonumber
\end{align}
where $U_\tau(\cd,\l)$ is the operator solution \eqref{3.52} and $\b(\l)(\in [H^n\oplus H^n,\cK'])$ is given  by
\begin{equation*}
\b(\l)=(-(K_1(\l)+M(\l)K_0(\l))^{-1}M(\l)\up H^n\;\;\,(K_1 (\l)+M(\l)K_0(\l))^{-1} \up H^n).
\end{equation*}
Moreover, the inequality \eqref{3.56} holds for all $\mu\in \bC_+\cup\bC_- $.
\end{proposition}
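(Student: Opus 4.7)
The plan is to reduce Proposition \ref{pr3.13} to Propositions \ref{pr3.8} and \ref{pr3.9} by viewing a decomposing boundary triplet $\Pi=\bt$ as a special decomposing $D$-triplet in which $\cH_0'=\cH_1'=:\cH'$ (hence $\cH_2'=\{0\}$), and then extending the arguments to both half-planes simultaneously. By Remark \ref{rem1.5}, in this setting the $\g$-field and Weyl function are defined on all of $\rho(A_0)\supset\bC_+\cup\bC_-$ via $\g(\l)=\g_\pm(\l)$ and $M(\l)=M_\pm(\l)$, $\l\in\bC_\pm$, so every computation performed for $\l\in\bC_+$ in Propositions \ref{pr3.8} and \ref{pr3.9} transfers verbatim to $\l\in\bC_-$.

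For statement 1), first I would verify directly (as in the proof of Proposition \ref{pr3.8}) that the operator functions
\begin{equation*}
u_{j\tau}(t,\l)=(-1)^{j-1}Z_0(t,\l)(C_0(\l)-C_1(\l)M(\l))^{-1}\hat C_j(\l),\quad j\in\{1,2\},
\end{equation*}
are operator solutions of \eqref{1.23} in $\LH{H^n}$ (using Corollary \ref{cor1.11}, 3) in place of statement 3) of Theorem \ref{th1.10}). Applying Lemma \ref{lem1.2} to the pair $\t=\{(C_0(\l),-C_1(\l));\cK\}$ and the boundary triplet $\Pi$ gives $(C_0(\l)\G_0-C_1(\l)\G_1)\g(\l)=C_0(\l)-C_1(\l)M(\l)$, so the conditions \eqref{3.48} are satisfied (rewritten via \eqref{3.65} as $(C_0(\l)\G_0-C_1(\l)\G_1)(u_{j\tau}(t,\l)\hat h)=(-1)^{j-1}\hat C_j(\l)\hat h$). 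Uniqueness follows as in Proposition \ref{pr3.8}: for the extension $\wt A(\l)$ defined by \eqref{3.63a}, any other solution yields an element of $\cD(\wt A(\l))\cap\gN_\l(L_0)$, which is trivial because $\l\in\rho(\wt A(\l))$ by Theorem \ref{th3.10}.

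For statement 2), I would apply Proposition \ref{pr3.9} to the unique representation $\pair$ of $\tau$ with $\cK_+'=\cK_-'=:\cK'$ and $K_{0\pm}(\l), K_{1\pm}(\l)$ agreeing with $K_0,K_1$ in the representation \eqref{1.9c} on the corresponding half-plane. Since $\cH'_2=\{0\}$, the block-matrix decomposition \eqref{3.42} degenerates to $K_{01}(\l)=K_0(\l)$ and $K_{02}(\l)=0$, so that the term $-iK_{02}^*(\l)K_{02}(\mu)$ in \eqref{3.54} vanishes and formula \eqref{3.54} reduces to the desired identity \eqref{3.70a} for $\mu,\l\in\bC_+$. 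The same reasoning applied in the lower half-plane (using the equivalent representation via \eqref{1.8}, \eqref{1.8a}) extends \eqref{3.70a} to $\mu,\l\in\bC_+\cup\bC_-$. Setting $\mu=\l$ and invoking the Nevanlinna inequality $Im\,\l\cd Im(K_0^*(\l)K_1(\l))\geq 0$ from Remark \ref{rem1.0.5}, 2) (which replaces \eqref{1.7a}--\eqref{1.7c}) yields \eqref{3.56}.

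The main obstacle I anticipate is purely bookkeeping: carefully matching the various block-matrix conventions so that the degeneration $\cH_2=\{0\}$ kills exactly the right terms and the half-plane-dependent Weyl functions $M_\pm(\l)$ glue into the single Nevanlinna function $M(\l)$. Once this identification is in place, no additional analytic work is required beyond what is already contained in Propositions \ref{pr3.8} and \ref{pr3.9}.
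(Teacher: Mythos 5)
Your proposal is correct and follows exactly the route the paper takes: Proposition \ref{pr3.13} is stated there as an immediate consequence of Propositions \ref{pr3.8} and \ref{pr3.9}, specialized to the boundary-triplet case $\cH_0'=\cH_1'=\cH'$ (so $\cH_2=\{0\}$ and $K_{02}=0$), with the extension to $\bC_-$ coming from the symmetry $M(\l)=M_\pm(\l)$. Your write-up simply supplies the bookkeeping that the paper leaves implicit.
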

\subsection{The case of minimal deficiency indices}
In this subsection we show that for differential expressions $l[y]$ with minimal deficiency indices $n_\pm (L_0)$ the previous  statements can be rather simplified.

Let
\begin{equation}\label{3.71}
K=(K_0\;\;\;K_1)^\top: H^n\to H^n\oplus H^n
\end{equation}
be a selfadjoint operator pair (that is $\t=\{K_0,K_1; H^n\}\in \C (H^n)$ is a selfadjoint linear relation). With each such a pair we associate the operator solution $\f_K(\cd,\l):\D\to [H^n,H] $ of the equation \eqref{1.23} defined by the initial data $\f_K^{(1)}(0,\l)= K_0, \; \f_K^{(2)} (0,\l)=K_1$.
\begin{proposition}\label{pr3.14}
Let $l[y]$ be a differential expression \eqref{1.20} with the deficiency indices $n_{b\pm}$ at the point $b$ (see Definition \ref{def1.6a}). Then the following statements are equivalent:

(i) $n_{b+}=n_{b-}=0$;

(ii) there exist a selfadjoint operator pair \eqref{3.71} and a pair of points $\l\in\bC_+$ and $\mu\in\bC_-$ such that
\begin{equation}\label{3.72}
\smallint_0^b ||\f_K(t,\l)\hat h||^2\,dt=\smallint_0^b ||\f_K(t,\mu) \hat h||^2\,dt=\infty, \quad \hat h\in H^n, \;\;\hat h\neq 0;
\end{equation}

(iii) the relation \eqref{3.72} holds for all selfadjoint pairs \eqref{3.71} and for all $\l\in \bC_+\cup\bC_-$.

If in addition $\dim H< \infty$, then the statements (i)--(iii) are equivalent to the equalities $n_+(L_0)=n_-(L_0)=n\cd\dim H$, which means minimality of the deficiency indices $n_\pm (L_0)$.
\end{proposition}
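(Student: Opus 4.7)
My plan is to translate condition (ii) into the assertion that the defect subspaces of the symmetric extension $L_\tau$ vanish, where $\tau = \{K_0\hat h, K_1\hat h : \hat h \in H^n\}$ is the selfadjoint relation corresponding to the (admissible) selfadjoint operator pair $K$. The key identification I would establish first is
\begin{equation*}
\cD(L_\tau^*) = \{y \in \cD : \wt y(0) \in \tau\}, \qquad \gN_\l(L_\tau) = \{y \in \gH : l[y]=\l y,\ \wt y(0)\in\tau\}.
\end{equation*}
This is obtained as follows: since $L_0 \subset L_\tau$, we have $L_\tau^* \subset L_0^* = L$; for $y \in \cD$ and $z \in \cD(L_\tau)$ the Lagrange identity \eqref{1.28} gives $(L_\tau z,y)-(z,Ly) = [z,y](b) - [z,y](0)$, and the term $[z,y](b)$ vanishes by the definition \eqref{1.29}. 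Hence $y \in \cD(L_\tau^*)$ iff $[z,y](0)=0$ for every $z$ with $\wt z(0) \in \tau$, i.e.\ iff $\wt y(0)\in\tau^* = \tau$. Injectivity of $K$ then makes $\hat h \mapsto \f_K(\cdot,\l)\hat h$ a bijection between $H^n$ and the space of solutions of \eqref{1.23} with $\wt y(0) \in \tau$, so $\gN_\l(L_\tau) = \{0\}$ is equivalent to $\smallint_0^b ||\f_K(t,\l)\hat h||^2\,dt = \infty$ for every nonzero $\hat h \in H^n$.

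With this equivalence the three implications become immediate. For (i) $\Rightarrow$ (iii), $n_{b\pm}=0$ forces $n_\pm(L_\tau) = 0$ for every selfadjoint $\tau$, so $\gN_\l(L_\tau)=\{0\}$ for every $\l \in \bC_+\cup\bC_-$; the implication (iii) $\Rightarrow$ (ii) is trivial. For (ii) $\Rightarrow$ (i), the identification yields $\gN_{\l}(L_\tau) = \gN_{\mu}(L_\tau) = \{0\}$ for the given $K$ and the given points $\l \in \bC_+,\ \mu \in \bC_-$; since $\dim\gN_z(L_\tau)$ is constant on each open half-plane for a closed symmetric operator, this gives $n_\pm(L_\tau)=0$, and invoking the $\tau$-independence of $n_\pm(L_\tau)$ noted just before Definition \ref{def1.6a} yields $n_{b\pm}=0$.

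For the supplement when $\dim H < \infty$, I would use the existence of a decomposing $D$-triplet $\Pi = \bta$ for $L$ with $\cH_j = H^n \oplus \cH_j'$; the relations $\dim\cH_0 = n_+(L_0),\ \dim\cH_1 = n_-(L_0)$ (from the surjectivity requirement of Definition \ref{def1.1}) together with $\dim\cH_j' = n_{b\pm}$ (see \eqref{1.32.0}) give the additive formula $n_\pm(L_0) = n\,\dim H + n_{b\pm}$, whence the equivalence of (i) with $n_\pm(L_0)=n\,\dim H$ is immediate. The main obstacle is the computation of $L_\tau^*$: one has to recognize that the condition $[y,z](b)=0$ for all $z \in \cD$ in \eqref{1.29} is exactly the maximally isotropic condition at the singular end and is therefore dropped entirely upon passing to the adjoint, while the selfadjoint condition $\wt y(0) \in \tau$ at the regular end is preserved.
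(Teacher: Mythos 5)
Your proposal is correct and follows essentially the same route as the paper: both arguments rest on identifying the defect subspace $\gN_\l(L_\t)$ with the square-integrable solutions $\f_K(\cdot,\l)\hat h$, combining this with $n_{b\pm}=n_\pm(L_\t)$ and the injectivity of $K$, and deducing the supplement from $n_\pm(L_0)=n\cdot\dim H+n_{b\pm}$. The only difference is that you spell out, via the Lagrange identity and the computation of $L_\t^*$, the identification that the paper simply declares ``clear,'' which is a harmless (indeed welcome) elaboration.
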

\begin{proof}
Let $\t=\{K_0,K_1; H^n\}\in \C (H^n)$ be a selfadjoint linear relation (operator pair) and let $L_\t\in\exl$ be a symmetric extension with the domain \eqref{1.29}. It is clear that for all $\l\in\bC_+ \cup\bC_-$ a defect subspace $\gN_\l (L_\t)$ consists of all functions $y\in\gH$ admitting the representation $y=\f_K(t,\l)\hat h$ with some $\hat h\in H^n$. At the same time according to Definition \ref{def1.6a} $n_{b\pm}=n_\pm(L_\t)=\dim \gN_\l (L_\t), \; \l\in\bC_\pm$. This and the relation
\begin{equation*}
y:=\f_K(t,\l)\hat h=0\iff K\hat h=0\iff \hat h=0
\end{equation*}
give the implications (i)$\Rightarrow$(iii) and (ii) $\Rightarrow$(i). Moreover, the implication (iii) $\Rightarrow$(ii) is obvious. Finally, the last statement is implied by the equality $n_\pm (L_0)=n\cd\dim H+n_{b\pm}$.
\end{proof}

\begin{proposition}\label{pr3.15}
Let the expression \eqref{1.20} satisfies at least one (and, hence, all) conditions (i)--(iii) from Proposition \ref{pr3.14}. Then:

1) a triplet $\Pi=\{H^n,\G_0,\G_1\}$ with
\begin{equation}\label{3.73}
\G_0 y=y^{(2)}(0), \quad \G_1 y=-y^{(1)}(0), \quad y\in\cD
\end{equation}
is a (unique decomposing) boundary triplet for $L$. Moreover,  the corresponding selfadjoint extension $A_0$ is given by
\begin{equation}\label{3.73a}
\cD(A_0)=\{y\in\cD: y^{(2)}(0)=0\}, \quad A_0=L\up\cD(A_0);
\end{equation}

2) for every $\l\in\rho (A_0)$ there exists the unique operator solution $v_0(\cd,\l)\in\LH{H^n}$ of the equation \eqref{1.23} obeying the condition $v_0^{(2)}(0,\l)=I_{H^n}$. Moreover, the $\g$-field $\g(\l)$ for the boundary triplet \eqref{3.73} is connected with $v_0(\cd,\l)$ by
\begin{equation*}
(\g(\l)\hat h)(t)=v_0(t,\l)\hat h, \quad \hat h\in H^n,\;\;\l\in\rho (A_0);
\end{equation*}

3) the $m$-function $m(\cd):\rho (A_0)\to [H^n]$ corresponding to the extension $A_0$ is uniquely defined by the condition
\begin{equation*}
-c(t,\l)m(\l)+ s(t,\l)\in\LH{H^n},\;\;\; \l\in\rho (A_0),
\end{equation*}
which imply the equality $m(\l)=-v_0^{(1)}(0,\l)$. Moreover, the Weyl function $M(\cd)$,corresponding to the boundary triplet \eqref{3.73}, is defined by
\begin{equation}\label{3.75}
M(\l)=m(\l), \quad \l\in \rho (A_0).
\end{equation}
\end{proposition}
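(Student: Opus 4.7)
The plan is to derive this proposition by specializing the general theory of decomposing boundary triplets developed earlier in the paper to the trivial case $\cH'=\{0\}$, which is forced by the hypothesis $n_{b+}=n_{b-}=0$.

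For statement 1), I would start from \eqref{1.32.0}: any decomposing boundary triplet for $L$ satisfies $\dim\cH'=n_{b+}=n_{b-}$, which under our assumption forces $\cH'=\{0\}$ and hence $\G'_0=\G'_1=0$. Substituting this into \eqref{1.30} yields exactly the triplet \eqref{3.73}, and uniqueness follows at once since no freedom remains. To verify that this really is a decomposing boundary triplet, observe that with $\cH'=\{0\}$ the identity \eqref{1.31} degenerates to $[y,z](b)=0$ for all $y,z\in\cD$, so the Lagrange identity \eqref{1.28} reduces to the required Green identity \eqref{1.32}; surjectivity of $y\mapsto(y^{(2)}(0),-y^{(1)}(0))$ onto $H^n\oplus H^n$ is a standard consequence of $\wt y(0)$ ranging over $H^n\oplus H^n$. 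Finally $A_0=L\up\Ker\G_0$ has the stated domain directly from the form of $\G_0$.

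For statement 2), I would apply Corollary \ref{cor1.11} to the triplet just obtained. With $\cH'=\{0\}$ the auxiliary solution $u_0(\cd,\l)$ disappears, leaving only $v_0(\cd,\l)\in\LH{H^n}$ with $v_0^{(2)}(0,\l)=I_{H^n}$ and the vacuously satisfied boundary condition at $b$. The $\g$-field formula is exactly $(\g(\l)\hat h)(t)=Z_0(t,\l)\hat h$ from that corollary, with $Z_0=v_0$ in this setting.

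For statement 3), the equality $m(\l)=-v_0^{(1)}(0,\l)$ is immediate from the matrix identity for $\wt Z_0(0,\l)$ in Corollary \ref{cor1.11} when the $\cH'$-column is deleted. Using \eqref{1.25a} one writes $v_0(t,\l)=-c(t,\l)m(\l)+s(t,\l)$, which gives the inclusion $-c(\cd,\l)m(\l)+s(\cd,\l)\in\LH{H^n}$ automatically. The main step is the uniqueness of $m(\l)$ under this characterization: any competitor $m_1(\l)$ would yield $c(\cd,\l)(m_1(\l)-m(\l))\hat h\in\gH$ for every $\hat h\in H^n$. Here I would invoke Proposition \ref{pr3.14} applied to the selfadjoint operator pair $K=(I_{H^n},0)^\top$, for which $\f_K(\cd,\l)=c(\cd,\l)$; the hypothesis $n_{b\pm}=0$ then rules out any nonzero column of $m_1-m$, forcing $m_1=m$. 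The equality $M(\l)=m(\l)$ is the $(1,1)$-entry of the block representation \eqref{1.42} when $\cH'=\{0\}$. The principal (and quite modest) obstacle is arranging this uniqueness argument so that it rests cleanly on Proposition \ref{pr3.14}; the rest is a routine specialization of machinery already built.
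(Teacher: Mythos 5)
Your proposal is correct and follows essentially the same route as the paper, whose entire proof consists of the remark that statement 1) is a consequence of \eqref{1.32.0} and that the remaining statements are immediate from Corollary \ref{cor1.11}. You simply flesh out that same specialization to $\cH'=\{0\}$, and your extra uniqueness argument for $m(\l)$ via Proposition \ref{pr3.14} with $K=(I_{H^n}\;\;0)^\top$ (so that $\f_K=c$) is a sound way to justify the one claim the paper leaves implicit.
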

\begin{proof}
The statement 1) is a consequence of the relation \eqref{1.32.0}, while all other statements are immediate from Corollary \ref{cor1.11}.
\end{proof}
Now applying Theorems \ref{th3.10} and \ref{th3.11} to the boundary triplet \eqref{3.73} we arrive at the following theorem.
\begin{theorem}\label{th3.16}
Let suppositions of Proposition \ref{pr3.15} be satisfied. Then:

1) the relations
\begin{align*}
\cD (\wt A(\l))=\{y\in\cD: \, C_1(\l) y^{(1)}(0)+  C_0(\l) y^{(2)}(0)=0\}, \quad  \wt A(\l)=L\up\cD (\wt A(\l)) \\
\R=\bR_\tau (\l)= (\wt A(\l)-\l)^{-1},\quad \l\in\bC_+\cup \bC_-\qquad\qquad\qquad
\end{align*}
establish a bijective correspondence between all generalized resolvents $\R$ of the operator $L_0$ and all Nevanlinna operator pairs
\begin{equation}\label{3.76}
\tau (\l)=\{(C_0(\l),C_1(\l));H^n\}\in\wt R(H^n), \quad \l\in\bC_+\cup \bC_-
\end{equation}
Moreover $\R$ is a canonical resolvent if and only if $\tau\in \wt R^0(H^n)$;

2) the Green function $G_\tau(x,t,\l)$, corresponding to the operator pair \eqref{3.76}, is
\begin{equation}\label{3.77}
G_\tau(x,t,\l)=\begin{cases}v_0(x,\l)\, \f_\tau^*(t,\ov\l), \;\; x>t \cr \f_{\tau}(x,\l)\, v_0^* (t,\ov\l), \;\; x<t \end{cases}, \quad \l\in\bC_+\cup\bC_-,
\end{equation}
where $v_0(\cd,\l)$ is defined in Proposition \ref{pr3.15}, 2) and $\f_\tau(\cd,\l):\D\to [H^n,H]$ is a solution of \eqref{1.23} with the initial data
\begin{equation*}
\wt \f_{\tau }(0,\l)=(- C_0^*(\ov\l)\;\;\;  C_1^*(\ov\l))^\top \, ( C_0^*(\ov\l)-m(\l) C_1^*(\ov\l))^{-1}, \quad \l\in\bC_+\cup\bC_-;
\end{equation*}

3) the statement of Theorem \ref{th3.11} holds with the equality \eqref{3.77} instead of \eqref{3.13}.
\end{theorem}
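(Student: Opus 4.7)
The plan is to derive the theorem as a direct specialization of the general results already established (Theorems \ref{th3.10} and \ref{th3.11}, together with Proposition \ref{pr3.15}) to the case where the ``prime'' auxiliary space $\cH'$ in the decomposing boundary triplet is trivial. Under the minimal deficiency hypothesis, Proposition \ref{pr3.15} tells us that $\Pi=\{H^n,\G_0,\G_1\}$ with $\G_0 y = y^{(2)}(0)$ and $\G_1 y=-y^{(1)}(0)$ is a (in fact the unique) decomposing boundary triplet for $L$, that the corresponding maximal symmetric extension $A_0$ is actually the selfadjoint extension \eqref{3.73a}, and that the associated Weyl function satisfies $M(\l)=m(\l)$. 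In the notation of Corollary \ref{cor1.11} this means $\cH'=\{0\}$, $\cH=H^n$, and the operator solution $Z_0(t,\l)=(v_0(t,\l)\;\;u_0(t,\l))$ of \eqref{1.43} collapses to $Z_0(t,\l)=v_0(t,\l)\in\LH{H^n}$, while $M_2(\l)$ disappears and $S(\l)$ from Theorem \ref{th3.12} reduces to $(-m(\l)\;\;I_{H^n})^\top$.

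For part 1), I would invoke Theorem \ref{th3.10}. Since $\cH'=\{0\}$, the boundary maps $\G_0',\G_1'$ are trivial, and the block-matrix representations \eqref{3.65} degenerate into $C_0(\l)=\hat C_2(\l):H^n\to\cK$ and $C_1(\l)=\hat C_1(\l):H^n\to \cK$. Substituting this into the boundary condition \eqref{3.63a} yields exactly $C_1(\l)y^{(1)}(0)+C_0(\l)y^{(2)}(0)=0$ as claimed. The Nevanlinna class $\wt R(\cH)$ becomes $\wt R(H^n)$, and the statement about canonical resolvents corresponds to $\tau\in\wt R^0(H^n)$, which is just the specialization of the ``canonical $\iff \tau\in\RZ$'' statement in Theorem \ref{th3.10}.

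For part 2), I would specialize formula \eqref{3.13} of Theorem \ref{th3.11}. The role of $Z_0(t,\l)$ is played by $v_0(t,\l)$ (by the collapse noted above), and the role of $Y_\tau(t,\l)$ by the solution $\f_\tau(t,\l):\D\to[H^n,H]$ of \eqref{1.23}. The initial data \eqref{3.66}, under the substitutions $\hat C_2=C_0$, $\hat C_1=C_1$, $M(\l)=m(\l)$, become precisely
\begin{equation*}
\wt\f_\tau(0,\l)=(-C_0^*(\ov\l)\;\;\;C_1^*(\ov\l))^\top(C_0^*(\ov\l)-m(\l)C_1^*(\ov\l))^{-1},
\end{equation*}
as stated. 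Plugging everything into \eqref{3.13} yields \eqref{3.77}. Part 3) is then an immediate transcription of Theorem \ref{th3.11}: the integral representation \eqref{3.14} together with the specialized Green function \eqref{3.77} gives the announced bijection between generalized (canonical) resolvents and Nevanlinna operator pairs in $\wt R(H^n)$ (respectively $\wt R^0(H^n)$).

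There is no substantial obstacle here; the only real work is bookkeeping to confirm that each general formula collapses cleanly when $\cH'=\{0\}$. The most delicate check is that the inverse $(C_0^*(\ov\l)-m(\l)C_1^*(\ov\l))^{-1}$ in the initial data for $\f_\tau$ exists on all of $\bC_+\cup\bC_-$; this follows from Theorem \ref{th2.3} applied to the proper extension $\wt A(\l)$ determined by $\tau(\l)$, exactly as it did in the general derivation of \eqref{3.10}--\eqref{3.11}, since $\l\in\rho(\wt A(\l))$ by construction of $\bR_\tau$. Once this is in hand, the theorem follows without further computation.
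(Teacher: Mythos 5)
Your proposal is correct and follows exactly the paper's own route: the paper introduces Theorem \ref{th3.16} with the single sentence ``Now applying Theorems \ref{th3.10} and \ref{th3.11} to the boundary triplet \eqref{3.73} we arrive at the following theorem,'' i.e., the same specialization to $\cH'=\{0\}$, $C_0(\l)=\hat C_2(\l)$, $C_1(\l)=\hat C_1(\l)$, $M(\l)=m(\l)$, $Z_0(t,\l)=v_0(t,\l)$ that you carry out. Your added check on the invertibility of $C_0^*(\ov\l)-m(\l)C_1^*(\ov\l)$ is a sensible (if not strictly required) piece of extra care already covered by the general derivation of \eqref{3.10}--\eqref{3.11}.
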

Next, Theorem \ref{th3.12} can be reformulated as follows.
\begin{theorem}\label{th3.17}
Let the expression \eqref{1.20} satisfies at least one (equivalently, all) conditions (i)--(iii) from Proposition \ref{pr3.14} and let $m(\cd)$ be the $m$-function corresponding to the extension \eqref{3.73a}. Then:

1) the characteristic matrix $\Om_\tau(\cd)$ of the operator $L_0$, corresponding to a Nevanlinna operator pair \eqref{3.76}, is defined by \eqref{3.67} (or, equivalently, \eqref{3.68}) with $\Om_\tau(\l)$ in place of $\wt\Om_\tau(\l)$ and $M(\l)=m(\l)$;

2) the statements of Theorem \ref{th3.12} hold  with $\tau(\cd)\in \wt R(H^n)$ and
\begin{equation*}
S(\l)=(-m(\l)\;\;I_{H^n})^\top:H^n\to H^n\oplus H^n, \quad M(\l)=m(\l),\;\;\;\l\in\bC_+\cup\bC_-.
\end{equation*}
\end{theorem}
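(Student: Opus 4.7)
The plan is to apply the general machinery of Theorems \ref{th3.11} and \ref{th3.12} to the specific decomposing boundary triplet $\Pi=\{H^n,\G_0,\G_1\}$ identified in Proposition \ref{pr3.15} and then observe that the various block-matrix decompositions collapse because $\cH'=\{0\}$ in this setting. So statement 1) and statement 2) should both fall out once the right substitutions are made, and there is no genuine new obstacle beyond careful bookkeeping.

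First I would note that under hypothesis \eqref{0.18}, Proposition \ref{pr3.15} tells us that $\Pi=\{H^n,\G_0,\G_1\}$ with $\G_0 y = y^{(2)}(0)$, $\G_1 y = -y^{(1)}(0)$ is a decomposing boundary triplet for $L$, that its Weyl function is $M(\l)=m(\l)\in [H^n]$ by \eqref{3.75}, and that the corresponding selfadjoint extension $A_0$ is given by \eqref{3.73a}. In the notation of the general setup this amounts to taking $\cH'=\{0\}$, so $\cH=H^n\oplus\{0\}\equiv H^n$ and the block-matrix representation \eqref{1.42} of the Weyl function collapses to the single block $m(\l)$; in particular the off-diagonal entry $M_2(\l)$ does not appear.

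For part 1), I would apply Theorem \ref{th3.12}(ii), or equivalently the definition \eqref{3.69} together with \eqref{3.67}, to an arbitrary $\tau(\cd)\in\wt R(H^n)$ represented as $\tau(\l)=\{(C_0(\l),C_1(\l));H^n\}$. Since $\cH\oplus\cH=H^n\oplus H^n$, the orthoprojector $P_{H^n\oplus H^n}$ in \eqref{3.69} acts as the identity and the restriction to $H^n\oplus H^n$ is trivial, whence $\Om_\tau(\l)=\wt\Om_\tau(\l)$. Substituting $M(\l)=m(\l)$ into \eqref{3.67} (respectively into \eqref{3.68}, where now $\hat C_j(\l)=C_j(\l)$ since there is no $C_j'(\l)$ block) yields the required formula.

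For part 2), I would specialize Theorem \ref{th3.12} once more. The operator $\wt S(\l):\cH\to\cH\oplus\cH$ becomes simply $(-m(\l)\;\;I_{H^n})^\top:H^n\to H^n\oplus H^n$ because $M(\l)=m(\l)$ and $\cH=H^n$; and since again $P_{H^n\oplus H^n}$ is the identity on $\cH\oplus\cH$, the operator $S(\l)$ of Theorem \ref{th3.12} coincides with $\wt S(\l)$. Inserting this into \eqref{3.70} gives precisely the Krein--Naimark type parametrization claimed, and the bijectivity as well as the characterization of canonical resolvents (i.e.\ $\tau\in \wt R^0(H^n)$) is inherited from Theorem \ref{th3.12}. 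The formula $\Om_0(\l)=\bigl(\begin{smallmatrix} m(\l) & -\tfrac12 I_{H^n}\\ -\tfrac12 I_{H^n} & 0\end{smallmatrix}\bigr)$ is unchanged from \eqref{3.34}. The only point that deserves a brief verification is that every $\tau\in\wt R(H^n)$ is admissible here, i.e.\ that the boundary conditions $C_1(\l)y^{(1)}(0)+C_0(\l)y^{(2)}(0)=0$ do correspond under Theorem \ref{th3.10} to a generalized resolvent; this follows immediately from Theorem \ref{th3.10} applied with $\cH'=\{0\}$, so no separate argument is needed. No step presents a genuine obstacle; the whole proof is essentially a specialization and a collapse of blocks.
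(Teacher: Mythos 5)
Your proposal is correct and follows exactly the route the paper intends: the paper states Theorem \ref{th3.17} with no separate proof, presenting it as an immediate reformulation of Theorem \ref{th3.12} for the triplet of Proposition \ref{pr3.15}, where $\cH'=\{0\}$, $M(\l)=m(\l)$, and the projection $P_{H^n\oplus H^n}$ becomes the identity. Your bookkeeping (collapse of the blocks $M_2(\l)$ and $C_j'(\l)$, identification of $S(\l)$ with $\wt S(\l)$, and inheritance of bijectivity and the canonical-resolvent criterion) matches the paper's reasoning.
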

Finally, Proposition \ref{pr3.13} gives the following result.
\begin{proposition}\label{pr3.18}
Let under the hypothesis of Theorem \ref{th3.17} $\tau(\cd)\in \wt R(H^n)$ be a Nevanlinna operator pair \eqref{3.76} and let $\tau(\l)=\{K_0(\l),K_1(\l);H^n\}$ be an equivalent representation of $\tau (\cd)$ (c.f. \eqref{1.9c}). Then:

1) for every $\l\in\bC_+\cup\bC_-$ there exists the unique pair of operator functions $u_{j\tau}(\cd,\l)\in  \LH{H^n},$  $\; j\in\{1,2\}$ obeying the equation \eqref{1.23} and the boundary conditions
\begin{equation*}
C_1(\l)u_{1\tau}^{(1)}(0,\l)+ C_0(\l)u_{1\tau}^{(2)}(0,\l))=C_1(\l), \quad C_1(\l)u_{2\tau}^{(1)}(0,\l)+ C_0(\l) u_{2\tau}^{(2)}(0,\l))=-C_0(\l).
\end{equation*}
Moreover, the  functions $u_{j\tau}(\cd,\l)$ are connected with the operator solution $v_0(\cd,\l)$ by
\begin{align*}
u_{1\tau}(t,\l)=v_0(t,\l)(C_0(\l)-C_1(\l)m(\l))^{-1} C_1(\l),\qquad\qquad\\
u_{2\tau}(t,\l)=-v_0(t,\l)(C_0(\l)-C_1(\l)m(\l))^{-1} C_0(\l), \;\; \l\in\bC_+\cup\bC_-.
\end{align*}

2) the characteristic matrix $\Om_\tau(\cd)$  obeys the identity \eqref{3.70a}, in which $U_\tau(\cd,\l)$  is the operator solution \eqref{3.52} and
\begin{equation*}
\b(\l)=(-(K_1(\l)+m(\l)K_0(\l))^{-1}m(\l)\;\;\;(K_1 (\l)+m(\l)K_0(\l))^{-1}):H^n\oplus H^n\to H^n.
\end{equation*}
Moreover, the inequality \eqref{3.56} holds for all $\mu \in\bC_+\cup\bC_-$.
\end{proposition}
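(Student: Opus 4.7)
The plan is to derive Proposition \ref{pr3.18} as a direct specialization of Proposition \ref{pr3.13} to the unique decomposing boundary triplet $\Pi=\{H^n,\G_0,\G_1\}$ furnished by Proposition \ref{pr3.15}. Under the hypothesis $n_{b+}=n_{b-}=0$ one has $\cH'=\{0\}$, hence $\cH=H^n$; the Weyl function coincides with the $m$-function (see \eqref{3.75}), and the operator solution $Z_0(\cd,\l)$ of \eqref{1.43} reduces to $v_0(\cd,\l)$ because the $u_0(\cd,\l)$-component is absent.

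For the first part I first observe that, under $\cH'=\{0\}$, the block-matrix representations \eqref{3.65} degenerate: the primed components vanish ($C_0'(\l)=C_1'(\l)=0$), so $C_0(\l)=\hat C_2(\l)$ and $C_1(\l)=\hat C_1(\l)$. Substituting these identifications into the boundary conditions \eqref{3.48} and suppressing the absent term $(C_0'(\l)\G_0'-C_1'(\l)\G_1')(u_{j\tau}(t,\l)\hat h)$, I obtain exactly the two boundary conditions of the statement (the case $j=1$ yields the right-hand side $C_1(\l)$, and $j=2$ yields $-C_0(\l)$). Existence and uniqueness of $u_{1\tau},u_{2\tau}\in\LH{H^n}$ then follow directly from Proposition \ref{pr3.13}, 1), and the explicit formulas for $u_{j\tau}(t,\l)$ are obtained by making the substitutions $Z_0=v_0$, $M=m$, $\hat C_1=C_1$, $\hat C_2=C_0$ in the general formula $u_{j\tau}(t,\l)=(-1)^{j-1}Z_0(t,\l)(C_0(\l)-C_1(\l)M(\l))^{-1}\hat C_j(\l)$ stated there.

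For the second part I invoke Proposition \ref{pr3.13}, 2), which furnishes both the identity \eqref{3.70a} and the general expression for $\b(\l)$. Since $\cH=H^n$, the restriction $\up H^n$ appearing in the definition of $\b(\l)$ in Proposition \ref{pr3.13} is the identity, so $\b(\l)$ collapses to the formula stated, with $M$ replaced by $m$. The operator solution $U_\tau(\cd,\l)=(u_{2\tau}(\cd,\l)\;\; u_{1\tau}(\cd,\l))$ in \eqref{3.52} is built from the functions produced in the first part, and \eqref{3.70a} is inherited verbatim. Finally, setting $\mu=\l$ in \eqref{3.70a} and invoking the Nevanlinna-type inequality $Im\,\l\cd Im(K_0^*(\l)K_1(\l))\geq 0$ that characterises the class $\wt R(H^n)$ (see Remark \ref{rem1.0.5}, 2)) yields \eqref{3.56}; equality holds whenever $\tau\in\wt R^0(H^n)$, since the defining inequality becomes an equality in that case.

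There is no mathematical obstacle of depth here: the whole proposition is a transcription of Propositions \ref{pr3.13} and \ref{pr3.15}. The only care required is notational bookkeeping---tracking the identifications $\hat C_1\leftrightarrow C_1$, $\hat C_2\leftrightarrow C_0$, $Z_0\leftrightarrow v_0$, $M\leftrightarrow m$, and verifying that every restriction operator $\up H^n$ in Proposition \ref{pr3.13} acts as the identity under the present hypothesis.
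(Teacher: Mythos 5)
Your proposal is correct and follows exactly the route the paper itself takes: the paper introduces Proposition \ref{pr3.18} with the single remark that it is given by Proposition \ref{pr3.13}, i.e.\ by specializing to the unique decomposing boundary triplet $\Pi=\{H^n,\G_0,\G_1\}$ of Proposition \ref{pr3.15}, where $\cH'=\{0\}$, $M(\l)=m(\l)$, $Z_0(\cd,\l)=v_0(\cd,\l)$ and the primed boundary terms disappear. Your bookkeeping of the identifications $\hat C_1\leftrightarrow C_1$, $\hat C_2\leftrightarrow C_0$ and of the restrictions $\up H^n$ becoming identities is exactly what is needed, so nothing further is required.
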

\begin{remark}\label{rem3.19}
1) In the case $\dim H<\infty$ Proposition \ref{pr3.15} was proved in \cite{DM92}.

2) For a scalar Sturm-Liouville  differential expression $l[y]=-y''+q\,y $ on the semiaxis $[0,\infty)$  with deficiency indices  $n_\pm (L_0)=1$ (the limit point case) Theorem \ref{th3.16} and the statement 1) of Theorem \ref{th3.17} were proved in \cite{DLS87,DL96}. Observe also the papers \cite{DLS88,DLS93}, where similar results were obtained for  finite-dimensional Hamiltonian systems in the limit point case.
\end{remark}

\end{document}